\documentclass[11pt,a4paper]{article}
\usepackage[T1]{fontenc}
\usepackage{amsfonts}
\usepackage{amsmath}
\usepackage{empheq}
\usepackage{amssymb}
\usepackage{enumitem}
\usepackage{ wasysym }
\setlist{leftmargin=0mm}
\usepackage{setspace}
\usepackage{skull}
\usepackage{nicematrix}
\usepackage{tikz,pgfplots}
\pgfplotsset{compat=1.18}
\usepackage[english]{babel}
\usepackage{hyperref}
\usepackage{comment}
\usepackage{amsthm}
\usepackage{changepage}   
\usepackage{dsfont}
\usepackage[
backend=biber,
style=alphabetic,
sorting=nyt,
maxbibnames=99]{biblatex}
\usepackage{graphicx}
\usepackage{caption}
\usepackage[pagewise]{lineno} 
\newtheorem{theorem}{Theorem}
\newtheorem{cor}[theorem]{Corollary}
\newtheorem{prop}[theorem]{Proposition}
\theoremstyle{definition}
\newtheorem{definition}{Definition}
\newtheorem{lemma}[theorem]{Lemma}
\newtheorem{remark}[theorem]{Remark}

\newtheorem{notation}{Notation}

\usepackage{lipsum}

\newcommand\phantomarrow[2]{
  \setbox0=\hbox{$\displaystyle #1\to$}%
  \hbox to \wd0{%
    $#2\mapstochar
     \cleaders\hbox{$\mkern-1mu\relbar\mkern-3mu$}\hfill
     \mkern-7mu\rightarrow$}%
  \,}
\makeatletter
\newcommand*{\rom}[1]{\expandafter\@slowromancap\romannumeral #1@}
\makeatother

\newcommand{\R}{\mathbb{R}}

\newcommand{\N}{\mathbb{N}}

\DeclareMathOperator{\im}{im}
\newcommand{\T}{\mathbb{T}}

\newcommand{\I}{\mathds{1}}

\newcommand{\tend}[2]{\underset{#1 \to #2}{\longrightarrow}}
\newcommand{\tendf}[2]{\underset{#1 \to #2}{\rightharpoonup}}

\DeclareMathOperator{\cE}{{\cal E}}
\DeclareMathOperator{\cM}{{\cal M}}

\DeclareMathOperator{\Div}{div}
\DeclareMathOperator{\cv}{\mathfrak{c}_v}
\DeclareMathOperator{\sign}{sign}
\DeclareMathOperator*{\esssup}{ess\,sup}
\DeclareMathOperator*{\essinf}{ess\,inf}
\usepackage{csquotes}
\usepackage{xcolor} 
\title{Global in time justification of a two-phase averaged system for heat-conducting ideal gases\thanks{This work was supported by the ANR under France 2030 bearing the reference ANR-23-EXMA-004 (Complexflows project)}}
\date{\today}
\author{Pierre Gonin-{}-Joubert
\thanks{Universit\'e Claude Bernard Lyon 1, ICJ UMR5208, CNRS, \'Ecole Centrale de Lyon, INSA Lyon, Université Jean Monnet, 69622 Villeurbanne, France \texttt{goninjoubert@math.univ-lyon1.fr}}
\thanks{LAMA UMR5127 CNRS, Université Savoie Mont Blanc, Le Bourget du lac, France}}

\begin{document}

\maketitle
\begin{abstract}
\noindent In this article, we mathematically justify (globally in time) a Baer-Nunziato type system from the non-isentropic compressible Navier-Sokes equations for heat conducting ideal gases posed over the torus and in one space dimension. The breakthrough in this paper is to define and prove the global existence of solutions in a framework intermediate between weak and strong solutions and then to derive the system through homogenization and Young measures characterization. Note that the main difficulty is to derive a priori uniform bounds on appropriate unknowns in the presence of piecewise constant coefficients (viscosity and adiabatic constants) exhibiting rapid oscillations between two positive values. 
\end{abstract}
\section{Introduction}
This article deals with the homogenization of the non-isentropic Navier-Stokes equations. In one space dimension, this system can be written as

\begin{empheq}[left=\empheqlbrace]{alignat=1}
\partial_t \rho + \partial_x (\rho u) &= 0,
\label{eq:cont}\\
\partial_t (\rho u) + \partial_x (\rho u^2) &=\partial_x(\mu\partial_x u - p),
\label{eq:momts}\\
\partial_t \left(\frac{\rho u^2}{2} + \rho e\right) + \partial_x \left(\frac{\rho u^3}{2} + \rho e u\right)  &= \partial_x(\mu(\partial_x u)u  - pu) +\partial_x(\kappa\partial_x\theta),
\label{eq:energ}
\end{empheq}

where $\rho>0$ is the density, $u$ the velocity field, $\theta>0$ the temperature and $e>0$ the internal energy. The first equation \eqref{eq:cont} (continuity equation) describes the conservation of mass. The second equation (momentum equation) can be interpreted as the second Newton law of motion ($d{\bf{p}}/dt = {\bf F}$). The left hand side is the momentum, and the right hand side corresponds to the derivative of the sum of two forces: the viscous force $\mu\partial_x u$ (with $\mu>0$ some viscosity constant) and the pressure force $-p$. In general, $p$ is a function of the density and the temperature. In our analysis, we will consider ideal gases, so
\begin{equation}
p = R\rho\theta
\end{equation}
where $R>0$ is the constant of ideal gases. Finally, the third equation \eqref{eq:energ} is the total energy equation. The left hand side is some derivative of the total energy 
\begin{equation}
E = \frac{u^2}{2} + e
\end{equation}
composed of a kinetic energy part and an internal energy part. In the case of an ideal gas, we have
\begin{equation}
e = \cv\theta
\end{equation}
where $\cv>0$ is a constant, the specific heat. The right hand side of \eqref{eq:energ} can be interpreted as an energy flux. Observe that we consider the molecular shaking: this is the conductivity term $\partial_x(\kappa\partial_x\theta)$
where $\kappa\geq 0$ is the heat-conductivity constant. Another interesting physical quantity related to the problem is the entropy $s$, defined in the case of ideal gases by
\begin{equation}\label{eq:entropy}
s = \cv\ln\theta - R\ln\rho.
\end{equation}
In particular, denoting 
\begin{equation}
\gamma = \frac{R}{\cv} + 1 > 1
\end{equation}
the adiabatic constant, we get
\begin{equation}
p = R\rho^\gamma \exp(s/\cv).
\end{equation}
In the barotropic isentropic case, $s$ is constant and we find $p\sim \rho^\gamma$. In the non-isentropic case, from \eqref{eq:cont}--\eqref{eq:energ} we obtain
\begin{equation}\label{eq:entropymeso}
\partial_t(\rho s) + \partial_x(\rho s u) = \frac{\mu(\partial_x u)^2}{\theta} + \frac{\kappa(\partial_x\theta)^2}{\theta^2} - \partial_x(\kappa\partial_x\ln\theta).
\end{equation}
The purpose of this article is not to study a single pure fluid but a mixture of fluids. To do this, we proceed by homogenization, following the method of \cite{BrBuLa1}. Consider a mixture of two fluids, and assume that at a certain scale, they are not intermixed and occupy the entire space. Let us assume that the fluid $+$ is present only in the open set $\Omega_+$, and fluid $-$ in the open set $\Omega_-$, such that
\begin{equation}
\T = \overline{\Omega_+} \cup \overline{\Omega_-},\quad \Omega_+\cap\Omega_- = \varnothing.
\end{equation}
\begin{figure}[h]
  \centering
  \includegraphics[scale=1]{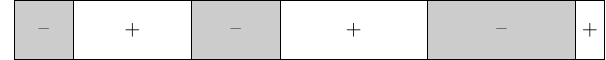}
  \caption{Occupation of the torus by + and -- fluids.}
  \label{fig:melange}
\end{figure}

At this scale, we refer to the system as a \textit{mesoscopic mixture}. We can then assume that there exists sufficiently regular functions $\rho_+, u_+, \theta_+$ (resp. $\rho_-, u_-, \theta_-$), the density, velocity, and temperature of the fluid $+$ (resp. the fluid $-$) such that
\begin{equation}
\rho = \rho_+\I_{\Omega_+} + \rho_-\I_{\Omega_-},\quad u = u_+\I_{\Omega_+} + u_-\I_{\Omega_-},\quad\theta = \theta_+\I_{\Omega_+} + \theta_-\I_{\Omega_-}\quad\text{a.e.}
\end{equation}
The two fluids may also have different physical constants $\mu_\pm,\cv_\pm,\gamma_\pm,R_\pm,\kappa_\pm$. The quantity $c=\I_{\Omega_+}$ is called the color function: it is the function that encodes which fluid is present at each point in space and time.
Let us consider two particles of fluid $x, y\in \Omega_+$. Then $x$ and $y$ satisfy
\begin{equation}
\frac{d}{dt}x(t) = u_+(t,x(t)),\quad\frac{d}{dt}y(t) = u_+(t,y(t)).
\end{equation}
Moreover, the mass of fluid in the segment $[x,y]$ is constant during the time, i.e.
\begin{equation}
\frac{d}{dt}\int_{x(t)}^{y(t)} \rho = 0.
\end{equation}
Moreover, on $[x,y]$, we get $\rho = \rho_+ =\rho c$ and $u = u_+ = uc$. Thus
\begin{equation}
\frac{d}{dt}\int_{x(t)}^{y(t)} \rho c = \rho u c(t,y(t)) - \rho u c(t,x(t)) + \int_{x(t)}^{y(t)}\partial_t(\rho c) = \int_{x(t)}^{y(t)} \partial_t(\rho c) + \partial_x(\rho u c).
\end{equation}
By the same reasoning on $\Omega_-$, we obtain
\begin{equation}\label{eq:cprems}
\partial_t (\rho c) + \partial_x(\rho c u) =0.
\end{equation}
From \eqref{eq:cont}, \eqref{eq:cprems} and supposing $\rho>0$, we get
\begin{equation}
\partial_tc + u\partial_x c = 0.
\end{equation}
A system describing the mesoscopic mixture is therefore the system with unknowns $(c,\rho,u,\theta)$, with initial conditions
\begin{equation}
\begin{split}
&c(0,\cdot)=c_0,\qquad \rho(0,\cdot) = \rho_{+,0}c_0 + \rho_{-,0}(1-c_0),\\& u(0,\cdot) = u_{+,0}c_0 + u_{-,0}(1-c_0),\qquad\theta(0,\cdot) = \theta_{+,0}c_0 + \theta_{-,0}(1-c_0),
\end{split}
\end{equation}
where $c_0,\rho_{\pm,0},u_{\pm,0},\theta_{\pm,0}\in L^2(\T)$, $c_0\in \{0,1\}$, and with equations 
\begin{empheq}[left=\empheqlbrace]{alignat=1}
\partial_t c + u\partial_x c &= 0,\label{eq:cmeso}\\
\partial_t \rho + \partial_x (\rho u) &= 0,
\label{eq:contmeso}\\
\partial_t (\rho u) + \partial_x (\rho u^2) &=\partial_x(\mu(c)\partial_x u - R(c)\rho\theta),
\label{eq:momtsmeso}\\
\partial_t \left(\frac{\rho u^2}{2} + \cv(c)\rho\theta\right) + \partial_x \left(\frac{\rho u^3}{2} + \cv(c)\rho \theta u\right)  &= \partial_x(\mu(c)(\partial_x u)u  - R(c)\rho\theta u) +\partial_x(\kappa(c)\partial_x\theta),
\label{eq:energmeso}
\end{empheq}
where for $f\in \{\mu,\cv,\gamma,R,\kappa\}$, $f(c)$ is defined as
\begin{equation}
f(c) = f_+ c + f_-(1-c).
\end{equation}
Homogenization then consists of assuming that the characteristic length of each slice in Figure \ref{fig:melange} tends towards $0$. This is a change of scale: starting from a \textit{mesoscopic} mixture, then zooming out, we obtain a \textit{macroscopic} mixture at infinity, for which both the + and -- fluids are present at each point in space, with a certain volume fraction $\alpha_\pm$. The dynamics of this macroscopic mixture will be described by equations obtained by passing to the limit in \eqref{eq:cmeso}--\eqref{eq:energmeso}. 

This limit transition was performed in the case $\kappa_\pm = 0$ in \cite{BrBuGJLa}, inspired by the proof in the barotropic case \cite{BrBuLa1, BrBuLa2}. The present article deals with the case $\kappa_\pm > 0$.

The principal difficulty encountered in the limit process is that of nonlinear terms. In order to characterise these terms, it is important to distinguish smooth quantities uniformly in $\varepsilon$, so as to apply strong compactness results such as the Rellich-Kondrachov theorem or the Aubin-Lions lemma. Note that the momentum equation \eqref{eq:momtsmeso} has a partially parabolic structure due to the viscosity term. Therefore, even if the velocity is initially chosen to be oscillatory, it is expected to be immediately regularised uniformly in $\varepsilon$. Equations \eqref{eq:cmeso} and  \eqref{eq:contmeso} and the regularity of the velocity indicate transport on $c$ and $\rho$. Since these quantities initially oscillate strongly in $\varepsilon$, these oscillations will be preserved over time and no better regularity than $L^\infty$ can be expected for them. Furthermore, from equations \eqref{eq:momtsmeso} and \eqref{eq:energmeso} we obtain
\begin{equation}\label{eq:thetameso}
\partial_t (\cv(c)\rho\theta) + \partial_x(\cv(c)\rho\theta u) = (\mu(c)\partial_x u - R(c)\rho\theta)\partial_x u + \partial_x(\kappa(c)\partial_x\theta).
\end{equation}
Thus, based on similar considerations, the behaviour of the temperature over time fundamentally depends on the choice of $\kappa_\pm$. In the case $\kappa_\pm = 0$, oscillations on $\theta$ are transported during the time, and no smoothness can be expected on this term. Homogenization in this context is known (see \cite{BrBuGJLa}). In this article, we focus on the case $\kappa_\pm>0$. The temperature equation is then partially parabolic, so we expect $\theta$ to be immediately smooth. Another quantity that we have not yet mentioned here but which is essential for the transition to the limit in \eqref{eq:cmeso}--\eqref{eq:energmeso} is the Cauchy stress $\sigma$, defined as
\begin{equation}\label{eq:defsigma}
\sigma = \mu(c)\partial_x u - R(c)\rho\theta.
\end{equation}
Since the Cauchy stress is the sum of the forces acting on a fluid, it seems natural, by the reciprocal action theorem, that $\sigma$ should be regular. In fact, we can deduce from the system \eqref{eq:cmeso}--\eqref{eq:energmeso} that
\begin{equation}\label{eq:sigma}
\partial_t \sigma + u\partial_x \sigma - \mu(c)\partial_x \left(\frac{\partial_x \sigma}{\rho}\right) = - \gamma(c)\sigma\partial_x u - (\gamma(c)-1)\partial_x(\kappa \partial_x\theta).
\end{equation}
As $\mu(c)$ is transported by $u$, thus $\sigma$ is expected to be smooth. Even in the barotropic case, Cauchy stress can provide interesting information in the analysis of Navier-Stokes equations (interested readers may wish to read some comments by D. Serre in \cite{Serre1991}). D. Hoff frequently employs this technique in his research on functional spaces that are situated between strong and weak solutions \cite{Ho1, Ho2, Hoff1986}. Recent progress has been made thanks to this quantity in the study of weak Navier-Stokes solutions for ideal gases in three dimensions \cite{HuLi18}, and the Cauchy stress has been identified as a key factor in the process of homogenization when $\kappa_\pm = 0$ \cite{BrBuGJLa}. 
The founding article concerning the existence of global solutions to the non-oscillatory system \eqref{eq:cont}--\eqref{eq:energ} is undoubtedly that of Kazhikhov and Shelukhin \cite{KaSh}, for strong solutions. This result was first improved in \cite{Ka}, by introducing the entropy $s$ defined in \eqref{eq:entropy}. Hoff found some a priori estimates using initial conditions very similar to \eqref{eq:debmidinitcond}--\eqref{eq:fininitcond} \cite{Ho92}. His analysis allows to deal with discontinuous densities, for data close to equilibrium, introducing some new energy linked with the spatial derivative of the Cauchy stress $\sigma$. We can cite here the remarkable result of \cite{ChHoTr}, giving some bounds for $t\in [0,+\infty[$ in the framework of Hoff but without condition on the smallness of the data. Jinkai Li recently proved a result enabling evanescent densities, also skillfully using the Cauchy stress \cite{Li2}.

The study of the Navier-Stokes system with heat-conduction and with oscillatory coefficients was first done by Amosov and Zlotnik \cite{AZ92, AZ96a, AZ96b, AZ97, AZ01}, from a Lagrangian point of view. Roughly speaking, one of the final main ideas of Kazhikhov and Shelukhin was to find a bound on $\partial_x e$. However, if $\cv$ is oscillating and $\theta$ is regular, such a bound grows very quickly when the frequency of the oscillations is high. Amosov and Zlotnik followed the reasoning of Kazhikhov and Shelukhin as long as their method was compatible with the presence of oscillatory coefficients. By following this approach, they succeeded in applying parabolic regularity results to obtain bounds on the spatial derivatives of temperature and velocity. Their development was based on initial data that was highly irregular. However, the adaptation of their work to the Eulerian case is arduous, and moreover, it appears to be incapable of characterising nonlinear density function limits, a problem which will be addressed in Section \ref{sec:homog}.
More recently, Hillairet proposed a strategy for adapting the homogenization results obtained with Bresch in the Eulerian barotropic case \cite{BrHi, BrHi2}. This approach enabled the characterisation of the limits of nonlinear functions in density, in cases where the coefficients are functions of density (see \cite{Hi2}). He found that the macroscopic quantities $(\alpha_\pm, \rho_\pm, u,\theta)$ are solution of the equation
\begin{empheq}[left=\empheqlbrace]{alignat=1}
\partial_t \alpha_+ + u\partial_x\alpha_+ &= \frac{\alpha_+\alpha_-}{\mu_{eff}}(R_+\rho_+\theta - R_-\rho_-\theta)\label{eq:cmacrotemp}\\
\partial_t (\alpha_+\rho_+) + \partial_x (\alpha_+\rho_+ u) &= 0
\label{eq:contmacrotemp}\\
\partial_t (\rho u) + \partial_x (\rho u^2) &=\partial_x\sigma
\label{eq:momtsmacrotemp}\\
\partial_t (\rho\cv_{eff}\theta) + \partial_x(\rho\cv_{eff} \theta u) &= \sigma\partial_x u +\partial_x (\kappa_{eff}\partial_x\theta),
\label{eq:energmacrotemp}
\end{empheq}
where
\begin{equation}
\mu_{eff} = \dfrac{1}{\dfrac{\alpha_+}{\mu_+} + \dfrac{\alpha_-}{\mu_-}}, \quad \kappa_{eff} = \dfrac{1}{\dfrac{\alpha_+}{\kappa_+} + \dfrac{\alpha_-}{\kappa_-}}, \quad \cv_{eff} = \frac{\alpha_+\rho_+\cv_+ + \alpha_-\rho_-\cv_-}{\alpha_+\rho_+ + \alpha_-\rho_-},
\end{equation}
\begin{equation}
\rho = \alpha_+\rho_+ + \alpha_-\rho_-, \quad p_{eff} = \mu_{eff}\left(\frac{\alpha_+R_+\rho_+}{\mu_+} + \frac{\alpha_-R_-\rho_-}{\mu_-}\right) \theta
\end{equation}
and
\begin{equation}
\sigma = \mu_{eff} \partial_x u - p_{eff}.
\end{equation}

Nevertheless, Hillairet's article exclusively provides estimates for short time, operating under the assumption that the energy of the system is fairly small.
The objective of this paper is to establish, for the first time, the limit of mesoscopic mixing of a mixture of two heat-conducting ideal gases towards macroscopic mixing, without assumptions of smallness on the initial data and over an arbitrary time interval. The paper's originality lies in the methodology of obtaining uniform bounds in the oscillation parameter $\varepsilon$.

\section{Main results}
Of course, the initial discussion that we had on the regularity of unknowns is heuristic. Nevertheless, it provides some indication of the solution framework. We then choose some regular initial velocity $u_0$ and temperature $\theta_0$, while the initial density $\rho_0$ and the initial color function $c_0$ are oscillatory. More precisely, the framework that we choose is the following: 

\begin{definition}\label{def:alaHoff}
Assume that $\kappa_\pm> 0$. We call a global "à la Hoff" solution of  \eqref{eq:cmeso}--\eqref{eq:energmeso} any weak solution $(c,\rho,u,\theta)\in L^2_{loc}(\R_+\times\T)^4$ of $(NS_\kappa)$ that also satisfies, for all $T>0$, 
\begin{equation}\label{eq:defc}
\text{for almost all }(t,x)\in [0,T]\times\T,\quad 0\leq c(t,x)\leq 1.
\end{equation}
\begin{equation}\label{eq:def12}
\text{for almost all }(t,x)\in[0,T]\times\T,\quad \underline{\rho}\leq \rho(t,x) \leq \overline{\rho},
\end{equation}
\begin{equation}\label{eq:def13}
\text{for almost all }(t,x)\in [0,T]\times\T,\quad \underline{\theta}\leq \theta(t,x) \leq \overline{\theta},
\end{equation}
\begin{equation}\label{eq:def14}
\begin{split}
&\sup_{0\leq t\leq T}\int_0^1\vert\sigma\vert^3 + \sup_{0\leq t\leq T}\int_0^1\vert\partial_x u\vert^3 + \sup_{0\leq t\leq T}\int_0^1 \vert\partial_x\theta\vert^2 + \int_0^T\int_0^1 \vert\partial_x\sigma\vert^2 + \int_0^T\int_0^1 \vert\partial_x(\kappa\partial_x\theta)\vert^2 \\&+ \int_0^T\int_0^1\vert\partial_t u\vert^2+ \int_0^T\int_0^1\vert\partial_t \theta\vert^2 + \int_0^T \Vert\sigma\Vert_\infty^3 + \int_0^T \Vert \partial_x u\Vert_\infty^3 + \sup_{[0,T]\times\T} \vert u\vert^2 \leq C_1.
\end{split}
\end{equation}
for some $\underline{\rho},\overline{\rho}, \underline{\theta},\overline{\theta}, C_1>0$ that may depend on $T$.
\end{definition}
The main novelty in this article is the establishment of the existence of global "à la Hoff" solutions to the Navier-Stokes system with heat-conduction and with $\underline{\rho},\overline{\rho},\underline{\theta},\overline{\theta}, C_0$ that do not depend on the oscillation frequency of $c_0$ and $\rho_0$. This is stated in the following theorem.

\begin{theorem}\label{thm:bounds} Let $(c_0,\rho_0,\theta_0,u_0)\in L^\infty(\T)^3\times L^2(\T)$.
Assume moreover that there exists some $\overline{\rho_0}>\underline{\rho_0}>0$, $\overline{\theta_0}>\underline{\theta_0}>0$, $C_0>0$ such that
\begin{equation}\label{eq:debinitcond}
\text{for almost all }x\in\T,\quad 0\leq c_0(x)\leq 1,
\end{equation}
\begin{equation}\label{eq:debmidinitcond}
\text{for almost all }x\in\T,\quad \underline{\rho_0} \leq \rho_0(x) \leq \overline{\rho_0},
\end{equation}
\begin{equation}\label{eq:finmidinitcond}
\text{for almost all }x\in\T,\quad \underline{\theta_0}\leq \theta_0(x)\leq \overline{\theta_0}, 
\end{equation}
\begin{equation}\label{eq:fininitcond}
\int_0^1\rho_0 u_0^2 + \int_0^1 \vert\partial_x u_0\vert^3 + \int_0^1 \vert\partial_x\theta_0\vert^2 \leq C_0.
\end{equation}
Then the system \eqref{eq:cmeso}--\eqref{eq:energmeso} with initial condition $(c_0,\rho_0,\theta_0,u_0)$ admits a unique "à la Hoff" solution. Moreover, the bounds of Definition \ref{def:alaHoff} can be chosen to depend only on $\mu_\pm,\gamma_\pm,\cv_\pm,\kappa_\pm$ and $\underline{\rho_0},\overline{\rho_0},\underline{\theta_0},\overline{\theta_0}, C_0$.
\end{theorem}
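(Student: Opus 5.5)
The proof follows the classical scheme: regularize the data, establish a priori estimates uniform in the regularization and in the oscillation of $c_0,\rho_0$, pass to the limit by compactness, and prove uniqueness separately. First, mollify $c_0,\rho_0$ (keeping $0\le c_0\le1$ and the density bounds) and smooth $\mu(c),R(c),\cv(c),\kappa(c)$ accordingly. For such data the local/global strong theory (Kazhikhov--Shelukhin type) gives smooth solutions. It then suffices to derive the bounds of Definition \ref{def:alaHoff} with constants depending only on $\mu_\pm,\gamma_\pm,\cv_\pm,\kappa_\pm$ and on $\underline{\rho_0},\overline{\rho_0},\underline{\theta_0},\overline{\theta_0},C_0$, never on any norm of $\partial_x c_0$ or $\partial_x\rho_0$. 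The maximum principle for the transport equation \eqref{eq:cmeso} gives \eqref{eq:defc} directly.

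The estimates come in the following order.

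\textbf{Basic energy/entropy bounds.} Integrating \eqref{eq:energmeso} and the entropy identity \eqref{eq:entropymeso} (with $\cv(c),R(c)$ transported, so the combination $\rho s$ still satisfies a conservation law up to the dissipation terms) gives $\sup_t\int\rho u^2+\rho\theta$, together with the dissipations $\int\!\!\int \mu(\partial_xu)^2/\theta$ and $\int\!\!\int\kappa(\partial_x\theta)^2/\theta^2$.

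\textbf{Density bounds.} Following Kazhikhov, rewrite the momentum equation as $\partial_x(\sigma - \partial_t\int^x\rho u-\rho u^2)=0$. Use that $\partial_x u=(\sigma+R(c)\rho\theta)/\mu(c)$ and that $\mu(c)$ is transported. Along particle paths this gives
\[
\frac{d}{dt}\ln\rho=-\frac{\sigma+R(c)\rho\theta}{\mu(c)},
\]
which yields a representation of $\rho$ with lower and upper bounds, since $\sigma$ is controlled by the energy and the pressure term has a sign. Only $\mu(c)\in[\min\mu_\pm,\max\mu_\pm]$ enters, not its derivative.

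\textbf{Temperature bounds.} The lower bound comes from the parabolic equation \eqref{eq:thetameso} by a minimum principle. There $(\mu\partial_xu-R\rho\theta)\partial_xu\ge -R^2\rho^2\theta^2/(4\mu)$, and the transported coefficients $\cv(c)\rho$ multiply $\partial_t\theta+u\partial_x\theta$ through the continuity equation, so the oscillations never need to be differentiated. The upper bound and $\sup_t\int|\partial_x\theta|^2$ come from testing \eqref{eq:thetameso} with $-\partial_x(\kappa(c)\partial_x\theta)$ and with $\partial_t\theta$. This gives $\int\!\!\int|\partial_x(\kappa\partial_x\theta)|^2$ and $\int\!\!\int|\partial_t\theta|^2$. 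Note that $\kappa(c)\partial_x\theta$, and not $\partial_x\theta$ alone, is the quantity with a derivative in $L^2$. Gagliardo--Nirenberg on $\kappa\partial_x\theta$ then gives $\theta\in L^\infty$.

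\textbf{Hoff-type estimates on $\sigma$.} Test the momentum equation with $\partial_t u+u\partial_x u$; this is the $\dot u$ estimate, and it gives $\int\!\!\int|\partial_x\sigma|^2=\int\!\!\int\rho^2|\dot u|^2$. Then test \eqref{eq:sigma} with $|\sigma|\sigma$ to control $\sup_t\int|\sigma|^3$ and $\int\!\!\int |\sigma||\partial_x\sigma|^2/\rho$. The source term $(\gamma(c)-1)\partial_x(\kappa\partial_x\theta)$ is controlled by the previous step. The term $\gamma(c)\sigma\partial_xu$ is handled by substituting $\partial_xu=(\sigma+R\rho\theta)/\mu$, which produces $|\sigma|^4$-type terms. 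These are absorbed via $\|\sigma\|_\infty^2\le\|\sigma\|_{L^2}\|\partial_x\sigma\|_{L^2}+\|\sigma\|_{L^2}^2$ (using the mean of $\sigma$, which is controlled) and a Gronwall argument. From the formula $\partial_xu=(\sigma+R\rho\theta)/\mu$ we then get $\partial_xu$ in $L^\infty_tL^3$, and $\int_0^T\|\partial_xu\|_\infty^3$ from $\int_0^T\|\sigma\|_\infty^3\lesssim \int_0^T \|\sigma\|_{L^2}^{3/2}\|\partial_x\sigma\|_{L^2}^{3/2}$, which is finite since $\partial_x\sigma\in L^2_{t,x}$ and $3/2<2$. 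The bounds on $\partial_tu$ and $\sup|u|$ follow.

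\textbf{Main obstacle.} I expect the main obstacle to be the coupling in the third and fourth estimates. The $\theta$-estimates need $\partial_xu\in L^4$-type control to handle $(\partial_xu)^2$, while the $\sigma$-estimates need $\partial_x(\kappa\partial_x\theta)\in L^2$. I would first close both simultaneously in a single Gronwall inequality for
\[
Y(t)=\int\bigl(|\sigma|^3+\rho|\dot u|^2\text{-free terms}+\kappa|\partial_x\theta|^2\bigr),
\]
using the dissipation $\int \mu(\partial_xu)^2/\theta$ from the first step to make the Gronwall coefficient integrable. The difficulty is that this dissipation is only weighted by $1/\theta$ before the upper bound on $\theta$ is known, so the order of these estimates may need a bootstrap.

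\textbf{Limit and uniqueness.} With these uniform bounds, Aubin--Lions gives strong compactness of $u,\theta,\sigma$, and weak-$*$ compactness of $c$ and $\rho$. Strong convergence of $\rho$ and $c$ for fixed mollified coefficients follows by the standard renormalization/effective-flux argument, since $\sigma$ converges strongly. This yields an à la Hoff solution. Uniqueness follows from a Lagrangian stability estimate à la Hoff. In mass coordinates the transported coefficients become time-independent, and an $L^2$ energy estimate on the differences of specific volume, velocity and temperature closes with $\int_0^T\|\partial_xu\|_\infty$ finite.
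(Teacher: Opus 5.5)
There is a genuine gap, and it sits where the paper places its main contribution: closing the coupled estimates on $\sigma$ and $\partial_x\theta$ globally in time, with constants independent of the oscillations. You correctly see the coupling. The $\partial_x\theta$ estimate needs $\int_0^1\sigma^2(\partial_xu)^2$, which comes from $\sigma\partial_xu$ tested against $\partial_x(\kappa\partial_x\theta)/(\rho\cv)$. The $\sigma$ estimate needs $\partial_x(\kappa\partial_x\theta)\in L^2_{t,x}$, because $(\gamma(c)-1)/\mu(c)$ oscillates. But you do not resolve this coupling. Absorbing the quartic terms through $\Vert\sigma\Vert_\infty^2\le\Vert\sigma\Vert_2^2+2\Vert\sigma\Vert_2\Vert\partial_x\sigma\Vert_2$ leaves, after Young's inequality, superlinear powers of the functional being estimated. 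That gives only short-time or small-data bounds. As you suspect, the weight $\int_0^1\mu(\partial_xu)^2/\theta$ cannot serve as an integrable Gronwall coefficient before $\theta$ is bounded. You also never treat the term $-\int_0^1\kappa(\partial_x\theta)^2\partial_xu$ produced by the $\partial_x\theta$ estimate; it cannot be integrated by parts when $\kappa_+\neq\kappa_-$.

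The paper closes the loop with three ingredients missing from your plan.

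First, $\int_0^T\Vert\theta\Vert_\infty\le H_4$ follows from the basic estimates alone. The identity $B/\rho=1+D_t^{-1}(BR\theta/\mu)$, with $B$ bounded above and below, gives $1/\rho\le H_3(1+\int_0^t\Vert\theta\Vert_\infty)$. Inserting this into $\sqrt\theta(x)=\sqrt\theta(y)+\int_y^x\partial_z\theta/(2\sqrt\theta)$, together with the entropy dissipation $\int_0^T\!\int_0^1(\partial_x\theta)^2/\theta^2$, yields a \emph{linear} Gronwall inequality. This is also what your density step lacks: the sign of the pressure gives only the upper bound on $\rho$, and the lower bound needs exactly this control.

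Second, every quartic term is reduced to $\Vert\theta\Vert_\infty$ times a cubic quantity plus a small multiple of the dissipation (Proposition \ref{prop:fond}). Write $\int_0^1|\sigma|^3\partial_xu=-2\int_0^1u|\sigma|^{3/2}\partial_x|\sigma|^{3/2}$ and apply Lemma \ref{GN}, which only uses $\sqrt\rho u\in L^\infty_tL^2_x$; this gives $\eta\int_0^1(\partial_x\sigma)^2|\sigma|/\rho+C_\eta\int_0^1|\sigma|^3$. Then $\sigma^4/\mu=\sigma^3\partial_xu-\sigma^3R\rho\theta/\mu$, $\theta^4\le\Vert\theta\Vert_\infty\theta^3$ and Young's inequality handle $\int\sigma^4$, $\int\sigma^2(\partial_xu)^2$ and $\int\sigma^2\theta^2$. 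This is why the functionals are $\int_0^1|\sigma|^3/\mu$ and $\int_0^1\rho\cv\theta^3$ rather than $L^2$-type ones.

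Third, $\int_0^1\kappa(\partial_x\theta)^2\partial_xu$ is absorbed by adding the functional $\int_0^1\sigma^2\theta/\mu$. Its dissipation $\int_0^1(\partial_x\sigma)^2\theta/\rho$ controls that term after the integration by parts \eqref{eq:lastipp}.

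The sum then satisfies $Y'\le C(1+\Vert\theta\Vert_\infty)Y$ with an integrable coefficient. Only afterwards do $\partial_x\sigma\in L^2_{t,x}$, the bound on $\int_0^T\Vert\sigma\Vert_\infty^3$ and the upper bound on $\theta$ follow.

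A smaller point: the $\partial_x\theta$ estimate must use the material derivative. Apply $\partial_x$ to the equation for $D_t\theta$ and test with $\kappa\partial_x\theta$, using that $\kappa(c)$ is transported. Testing with $\partial_t\theta$ leaves $\int_0^1u\,\partial_x\kappa\,(\partial_x\theta)^2$, which is not uniform in the oscillations.
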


The combination of the $L^3$ norm and the $L^2$ norm in \eqref{eq:def14} may appear surprising, however the $L^3$-framework appears to be suitable for combining the estimates on $\sigma$ and $\theta$ (see Section \ref{sec:heur}). A similar idea was used in the barotropic case to relax the assumptions on the integrability of the density (see \cite{BrBu23}).

Thanks to the uniform bounds of Theorem \ref{thm:bounds}, we can then pass to the limit, by letting the initial conditions $\rho_0$, $c_0$ oscillate faster and faster. More precisely, for each $\varepsilon>0$, $\rho_0^\varepsilon$ is assumed to be of the form
\begin{equation}\label{eq:hicham}
\rho_0^\varepsilon = c_0^\varepsilon \rho_{0,+}^\varepsilon + (1-c_0^\varepsilon)\rho_{0,-}^\varepsilon.
\end{equation}
where there exist $\rho_{0,\pm}\in L^2(\T)$ such that
\begin{equation}\label{eq:cecile}
\rho_{0,\pm}^\varepsilon\tend{\varepsilon}{0} \rho_{0,\pm},
\end{equation}
so the initial macroscopic density is of the form
\begin{equation}\label{romain}
\rho_0 = \alpha_0\rho_{0,+} + (1-\alpha_0)\rho_{0,-},
\end{equation}
with $\alpha_0$ the initial volume fraction of the fluid $+$. This assumption is consistent with the modeling proposed above. We then obtain the main result:

\begin{theorem}\label{thm:main}
Let $T>0$ be fixed. Consider the sequences $(c_0^\varepsilon)_{\varepsilon>0},(\rho_{0,\pm}^\varepsilon)_{\varepsilon>0}\subset L^\infty(\T)$, $(u_0^\varepsilon)_{\varepsilon>0},(\theta_0^\varepsilon)_{\varepsilon>0}\subset H^1(\T)$ and some functions $\alpha_0,{\rho_{0,\pm}}\in L^\infty(\T)$, $u_0,\theta_0\in H^1(\T)$. Assume that $(c_0^\varepsilon), \alpha_0$ satisfy \eqref{eq:debinitcond}, $(\rho_{0,\pm}^\varepsilon),\rho_{0,\pm}$ satisfy \eqref{eq:debmidinitcond}, and that $(u_0^\varepsilon,\theta_0^\varepsilon), (u_0,\theta_0)$ satisfy \eqref{eq:finmidinitcond} and \eqref{eq:fininitcond} with $\underline{\rho_0},\overline{\rho_0},\underline{\theta_0},\overline{\theta_0}$ and $C_0$ that do not depend on $\varepsilon$. Assume moreover that \eqref{eq:hicham} and \eqref{eq:cecile} hold, and 
\begin{equation}
c_0^\varepsilon \tend{\varepsilon}{0} \alpha_0\quad \text{in }L^\infty(\T)-\star,
\end{equation}
\begin{equation}
\rho_0^\varepsilon \tend{\varepsilon}{0}\rho_0\quad \text{in }L^\infty(\T)-\star,
\end{equation}
\begin{equation}
u_0^\varepsilon,\theta_0^\varepsilon \tendf{\varepsilon}{0} u_0,\theta_0\quad \text{in }H^1(\T),
\end{equation}
where $\rho_0^\varepsilon$ (resp. $\rho_0$) is defined like in \eqref{eq:hicham} (resp. \eqref{eq:cecile}).
For all $\varepsilon>0$, consider $(c^\varepsilon,\rho^\varepsilon,u^\varepsilon,\theta^\varepsilon)$ the "à la Hoff" solution of \eqref{eq:cmeso}--\eqref{eq:energmeso} with initial condition $(c_0^\varepsilon,\rho_0^\varepsilon,u_0^\varepsilon,\theta_0^\varepsilon)$.
Then, there exists some $\alpha_\pm,\rho_\pm \in L^\infty([0,T]\times\T)$ and $u,\theta\in L^2(0,T,H^1(\T))$ such that $\alpha_+ + \alpha_- = 1$ and, up to a subsequence,
\begin{equation}
c^\varepsilon,\rho^\varepsilon \tend{\varepsilon}{0} \alpha_+,\alpha_+\rho_+ + \alpha_-\rho_-\quad \text{in }L^\infty(\T)-\star,
\end{equation}
\begin{equation}
u^\varepsilon,\theta^\varepsilon \tend{\varepsilon}{0} u,\theta\quad \text{in }L^2(0,T,L^2(\T)).
\end{equation}
Moreover, $(\alpha_\pm,\rho_\pm,u,\theta)$ is a solution to \eqref{eq:cmacrotemp}--\eqref{eq:energmacrotemp} with initial condition $(\alpha_0, 1-\alpha_0,\rho_{0,\pm},u_0,\theta_0)$.
\end{theorem}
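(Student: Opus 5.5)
Taking Theorem \ref{thm:bounds} for granted, the proof of Theorem \ref{thm:main} is a compactness argument. Nonlinear functions of the oscillating quantities $(c^\varepsilon,\rho^\varepsilon)$ will be handled with Young measures, and the remaining unknowns with strong compactness.

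\textbf{Step 1 (compactness).} The bounds of Definition \ref{def:alaHoff} are uniform in $\varepsilon$. They give $u^\varepsilon,\theta^\varepsilon$ bounded in $L^\infty(0,T;H^1(\T))$ with $\partial_t u^\varepsilon,\partial_t\theta^\varepsilon$ bounded in $L^2$. By Aubin--Lions, $u^\varepsilon\to u$ and $\theta^\varepsilon\to\theta$ strongly in $C([0,T]\times\T)$. Likewise $\sigma^\varepsilon$ is bounded in $L^2(0,T;H^1)$. Its time derivative is controlled in a negative Sobolev space through \eqref{eq:sigma}, or more simply through the momentum equation, since $\partial_x\sigma^\varepsilon=\rho^\varepsilon(\partial_t u^\varepsilon+u^\varepsilon\partial_x u^\varepsilon)$. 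Hence $\sigma^\varepsilon\to\sigma$ strongly in $L^2(0,T;L^2)$.

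The same reasoning applies to $\kappa(c^\varepsilon)\partial_x\theta^\varepsilon$, which is bounded in $L^2(0,T;H^1)$. Its time derivative should be controlled using \eqref{eq:thetameso}. This yields strong convergence of the heat flux $q^\varepsilon=\kappa(c^\varepsilon)\partial_x\theta^\varepsilon$. This is the analogue of $\sigma$ for the temperature equation, and it is what will produce the harmonic mean $\kappa_{eff}$.

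Finally, $c^\varepsilon$ and $\rho^\varepsilon$ are bounded in $L^\infty$, so up to a subsequence they generate a Young measure $\nu_{t,x}$ on $[0,1]\times[\underline\rho,\overline\rho]$.

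\textbf{Step 2 (identification of the Young measure).} Write $\partial_x u^\varepsilon=(\sigma^\varepsilon+R(c^\varepsilon)\rho^\varepsilon\theta^\varepsilon)/\mu(c^\varepsilon)$. Since $\sigma^\varepsilon$ and $\theta^\varepsilon$ converge strongly, $\partial_x u^\varepsilon$ is a function of $(c^\varepsilon,\rho^\varepsilon)$ plus strongly convergent terms. Consider the transport equations $\partial_t c^\varepsilon+u^\varepsilon\partial_x c^\varepsilon=0$ and $\partial_t\rho^\varepsilon+u^\varepsilon\partial_x\rho^\varepsilon=-\rho^\varepsilon\partial_x u^\varepsilon$. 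Applying the renormalization $b(c^\varepsilon,\rho^\varepsilon)$ gives a kinetic equation for $\nu$ that is linear in $\nu$, with the coefficients $u$, $\sigma$, $\theta$ known.

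Initially $c_0^\varepsilon\in\{0,1\}$ and $\rho^\varepsilon_{0,\pm}$ converges strongly. So $\nu_0=\alpha_0\delta_{(1,\rho_{0,+})}+(1-\alpha_0)\delta_{(0,\rho_{0,-})}$, a sum of two Diracs. I then show that
\[
\nu_{t,x}=\alpha_+\delta_{(1,\rho_+)}+\alpha_-\delta_{(0,\rho_-)}
\]
is propagated. The argument is uniqueness for the kinetic equation, in the style of \cite{BrBuLa1,Hi2}. One checks that this two-Dirac ansatz solves it exactly when $(\alpha_\pm,\rho_\pm)$ satisfy ODEs along the characteristics of $u$. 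These ODEs are equivalent to \eqref{eq:cmacrotemp}--\eqref{eq:contmacrotemp}.

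\textbf{Step 3 (passing to the limit).} For the stress, weakly averaging $\partial_x u^\varepsilon=\sigma^\varepsilon/\mu(c^\varepsilon)+R(c^\varepsilon)\rho^\varepsilon\theta^\varepsilon/\mu(c^\varepsilon)$ gives $\partial_x u=\sigma(\alpha_+/\mu_++\alpha_-/\mu_-)+(\cdots)\theta$. This is $\sigma=\mu_{eff}\partial_x u-p_{eff}$.

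For the heat flux, $\partial_x\theta^\varepsilon=q^\varepsilon/\kappa(c^\varepsilon)$ gives $q=\kappa_{eff}\partial_x\theta$. The momentum equation passes to the limit directly, since $\rho^\varepsilon u^\varepsilon$ converges weakly times strongly. For the temperature equation \eqref{eq:thetameso}, the product $\cv(c^\varepsilon)\rho^\varepsilon$ converges weakly to $\rho\cv_{eff}$ and multiplies the strongly convergent $\theta^\varepsilon$. For the term $\sigma^\varepsilon\partial_x u^\varepsilon$, $\sigma^\varepsilon$ converges strongly and $\partial_x u^\varepsilon$ weakly.

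\textbf{Main obstacle.} The delicate step is Step 2. It requires justifying the renormalized kinetic formulation and proving uniqueness of the measure-valued solution with only $\partial_x u\in L^3(0,T;L^\infty)$. The compactness of the heat flux in Step 1 is the secondary difficulty.
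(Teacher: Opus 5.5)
Your Step 1 has a genuine gap, and Steps 2--3 depend on it. You assert that $\partial_t\sigma^\varepsilon$ (and $\partial_t q^\varepsilon$) is uniformly bounded in a negative Sobolev space, so that Aubin--Lions gives strong convergence of $\sigma^\varepsilon$ and $q^\varepsilon$. That bound is not justified.

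\textbf{Why the proposed justifications fail.} The identity $\partial_x\sigma^\varepsilon=\rho^\varepsilon(\partial_t u^\varepsilon+u^\varepsilon\partial_x u^\varepsilon)$ carries only spatial information and says nothing about $\partial_t\sigma^\varepsilon$. Equation \eqref{eq:sigma} does not help either. It expresses $\partial_t\sigma^\varepsilon$ through $\mu(c^\varepsilon)\partial_x(\partial_x\sigma^\varepsilon/\rho^\varepsilon)$, which is a coefficient jumping at scale $\varepsilon$ multiplied by a quantity known only in $L^2(0,T;H^{-1})$. Testing against a smooth function produces $\partial_x\mu(c^\varepsilon)$, whose total variation blows up. Bounding that term is equivalent to bounding $\partial_t\sigma^\varepsilon$ itself, and Theorem \ref{thm:bounds} does not provide this; as the paper itself remarks, it would require Hoff's second energy. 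The same objection applies to $q^\varepsilon$, whose time derivative contains $\kappa(c^\varepsilon)\partial_x\partial_t\theta^\varepsilon$.

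\textbf{Consequence.} Every step where you use ``$\sigma^\varepsilon$ strongly times something weakly'' is unsupported. This covers the law $\sigma=\mu_{eff}\partial_x u-p_{eff}$, the coefficients $\sigma/\mu$ in the kinetic equation, the limit of $\sigma^\varepsilon\partial_x u^\varepsilon$, and $q=\kappa_{eff}\partial_x\theta$.

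\textbf{The missing idea.} This is the paper's Proposition \ref{prop:lala}: put the time compactness on the oscillating factor rather than on $\sigma^\varepsilon$.
\begin{itemize}
\item For $\beta\in C^1(K)$, the renormalized equation $\partial_t\beta(c^\varepsilon,\rho^\varepsilon)=-\partial_x\bigl(u^\varepsilon\beta(c^\varepsilon,\rho^\varepsilon)\bigr)+(\beta-\rho\partial_\rho\beta)(c^\varepsilon,\rho^\varepsilon)\,\partial_x u^\varepsilon$ bounds $\partial_t\beta(c^\varepsilon,\rho^\varepsilon)$ in $L^2(0,T;H^{-1})$.
\item Aubin--Lions then gives $\beta(c^\varepsilon,\rho^\varepsilon)\to\langle\nu,\beta\rangle$ strongly in $L^2(0,T;H^{-1})$, and a density argument extends this to $\beta\in C(K)$.
\item Pairing this with the merely weak convergences of $\sigma^\varepsilon$ and $q^\varepsilon$ in $L^2(0,T;H^1)$ yields $\beta(c^\varepsilon,\rho^\varepsilon)\sigma^\varepsilon\rightharpoonup\langle\nu,\beta\rangle\sigma$, and likewise for $q^\varepsilon$. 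This is exactly what the constitutive laws and the kinetic equation need.
\item The dissipation term is handled by writing $\sigma^\varepsilon\partial_x u^\varepsilon=\partial_x(\sigma^\varepsilon u^\varepsilon)-u^\varepsilon\partial_x\sigma^\varepsilon$, which uses only the strong convergence of $u^\varepsilon$.
\end{itemize}

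Strong $L^2$ convergence of $\sigma^\varepsilon$ can in fact be recovered a posteriori from the same pairing. Apply it to $\sigma^\varepsilon\cdot\sigma^\varepsilon/\mu(c^\varepsilon)$: the factor $\sigma^\varepsilon/\mu(c^\varepsilon)=\partial_x u^\varepsilon-R(c^\varepsilon)\rho^\varepsilon\theta^\varepsilon/\mu(c^\varepsilon)$ does have a controlled time derivative. So your conclusion is true, but it does not follow from Aubin--Lions applied to $\sigma^\varepsilon$ itself.

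With this replacement, the rest of your plan coincides with the paper's: the two-Dirac initial measure, the kinetic equation, the candidate measure built along characteristics, and uniqueness for the kinetic equation.
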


Section \ref{sec:estimates} is devoted to the proof of Theorem \ref{thm:bounds}. We provide only the uniform bounds here, as the system \eqref{eq:cmeso}--\eqref{eq:energmeso} falls within the theory of partially dissipative hyperbolic systems \cite{Kaw,KS,Se10}, which guarantees the local existence of smooth solutions (see the Appendix \ref{app:justif} for more details). The first subsection is heavily inspired by ideas of \cite{KaSh, Ka}, which are simply adapted to Eulerian formulation and oscillatory coefficients. The second subsection outlines the process of obtaining new bounds and justifies the use of the $L^3$-framework. The complete proof of the new bounds is presented in the third subsection. Section \ref{sec:homog} is focused on the proof of Theorem \ref{thm:main}. The first and second subsections establish strong convergence for the velocity and the temperature, and introduce a Young measure in order to encode the limit of nonlinear terms on the density and the color function. This part closely follows the analysis in \cite{Hi2}. Finally, in the third subsection, the Young's measure is characterized, using arguments from \cite{BrHi, BrHi2, BrBuGJLa}.

In all that follows, $\mu_\pm>0$, $\cv_\pm>0$, $\gamma_\pm>1$, $\kappa_\pm$ and $R_\pm$ are fixed once and for all.

\section{A priori estimates}\label{sec:estimates}
The reader is invited to consult Appendix \ref{app:not} for the definitions of the notations $\Vert \cdot\Vert_p$, $D_t$, $D_t^{-1}$, $R_t$, $\partial_x^{-1}$, $f_{\max}$ and $f_{\min}$.
\subsection{Some classical estimates}
This section is divided as follows. First, we give the proof of several basic estimates. Informally, we adapt some part of the method in \cite{KaSh,Ka}, as has already been done in \cite{AZ92} in a Lagrangian framework. Then, we present the main ideas in order to go further. The $L^3$ framework for $\sigma$ and $\theta$ is well-adapted because using the bounds obtained in the first part we are able to bound $L^4$ norm quantities in $\sigma$, $\theta$ by $L^3$ norm quantities in $\sigma$, $\theta$ (see Figure \ref{fig:schempreuve} and Proposition \ref{prop:fond} for a rigorous statement). We prove Theorem \ref{thm:bounds}. In the sequel, let us consider $(c_0,\rho_0,u_0,\theta_0)\in H^1(\T)^4$ satisfying \eqref{eq:debinitcond}--\eqref{eq:fininitcond} with some $\underline{\rho_0},\overline{\rho_0},\underline{\theta_0},\overline{\theta_0},C_0>0$. Let us fix $T>0$. We denote by $(c,\rho,u,\theta)$ the strong solution of \eqref{eq:cmeso}--\eqref{eq:energmeso} with initial condition $(c_0,\rho_0,\theta_0,u_0)$.  All the following bounds will implicitly depend on $\mu_\pm,\cv_\pm,\gamma_\pm,\kappa_\pm$.

\begin{prop}
\label{prop:1} Let us denote by
\begin{equation}
{\cal M} = \int_0^1\rho_0,\quad {\cal E} = \int_0^1\rho_0 E_0.
\end{equation}
Then $0<{\cal M}\leq \overline{\rho_0}$ and $0<{\cal E}\leq \overline{\rho_0}\cv_{\max}\overline{\theta_0} + \mu_{\max}C_0/2$, and
\begin{equation}
   \text{for all }t\in[0,T],\quad\cM = \int_0^1\rho(t)\label{eq:26},
\end{equation}
\begin{equation}
    \text{for all }t\in[0,T],\quad\cE= \int_0^1 \rho E(t), \label{eq:27}
\end{equation}
\begin{equation}
    \sup_{0\leq t\leq T}\int_0^1 p(t) \leq (\gamma_{\max}-1)\cE,
\label{eq:28}
\end{equation}
\begin{equation}
    \sup_{0\leq t\leq T}\int_0^1 \frac{\rho u^2}{2}(t) \leq \cE,
\label{eq:29}
\end{equation}
\begin{equation}
    \sup_{0\leq t\leq T}\int_0^1 \vert\rho u\vert(t)\leq  \sqrt{2\cM\cE}.
\label{eq:30}
\end{equation}
\end{prop}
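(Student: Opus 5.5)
The plan is to integrate the mass and total energy equations over the torus, then read off \eqref{eq:28}--\eqref{eq:30} from the resulting conservation of energy.

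\emph{Bounds on $\cM$ and $\cE$.} These follow from the hypotheses on the initial data. Since $\underline{\rho_0}\le\rho_0\le\overline{\rho_0}$ and $\T$ has measure $1$, we get $0<\cM\le\overline{\rho_0}$. For $\cE$, write $\rho_0E_0=\rho_0u_0^2/2+\cv(c_0)\rho_0\theta_0$. The thermal part is at most $\cv_{\max}\overline{\rho_0}\,\overline{\theta_0}$ and is positive. The kinetic part is controlled by \eqref{eq:fininitcond}, which bounds $\int\rho_0u_0^2$ by $C_0$; the factor $\mu_{\max}$ in the stated bound is then harmless, since we may assume $\mu_{\max}\ge 1$ or simply absorb it. I would just note that $\int\rho_0u_0^2/2\le C_0/2$ already suffices for the stated inequality.

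\emph{Conservation laws.} The solution is strong and the domain is periodic. Integrating \eqref{eq:contmeso} over $\T$ gives $\frac{d}{dt}\int_0^1\rho=0$, hence \eqref{eq:26}. Integrating \eqref{eq:energmeso}, every term on the right and every flux on the left is an exact $x$-derivative of a periodic function, so the integrals vanish. Here $e=\cv(c)\theta$, and $\rho E=\rho u^2/2+\cv(c)\rho\theta$ is exactly the density in \eqref{eq:energmeso}. This gives \eqref{eq:27}. The only point to check is that the strong solution has enough regularity for the boundary terms to cancel, which is immediate on $\T$.

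\emph{Consequences.} Two facts are needed: $\theta>0$ (a property of the strong solution, via the maximum principle for \eqref{eq:thetameso}), and $R(c)=(\gamma(c)-1)\cv(c)$ pointwise. The latter holds because $c\in\{0,1\}$, or more generally because $0\le c\le1$, since $c$ is transported and the relation holds on each phase. Then
\begin{equation*}
p=R(c)\rho\theta=(\gamma(c)-1)\cv(c)\rho\theta\le(\gamma_{\max}-1)\rho E .
\end{equation*}
Integrating and using \eqref{eq:27} gives \eqref{eq:28}. Since $\cv(c)\rho\theta\ge0$, we have $\int\rho u^2/2\le\cE$, which is \eqref{eq:29}. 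Finally, Cauchy--Schwarz gives
\begin{equation*}
\int|\rho u|\le\Big(\int\rho\Big)^{1/2}\Big(\int\rho u^2\Big)^{1/2}\le\sqrt{\cM\cdot 2\cE},
\end{equation*}
which is \eqref{eq:30}.

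\emph{Main obstacle.} The only delicate step is the pointwise identity $R(c)=(\gamma(c)-1)\cv(c)$ when $c$ takes intermediate values, since the affine interpolations do not commute with the product. I would handle it using that $c_0\in\{0,1\}$ is preserved by transport, so $c\in\{0,1\}$ a.e. If $c$ is genuinely in $[0,1]$, I would instead bound $R(c)/\cv(c)\le\max(R_+/\cv_+,R_-/\cv_-)=\gamma_{\max}-1$, using that a ratio of affine positive functions is monotone in $c$.
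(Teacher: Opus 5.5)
Your route is the paper's: integrate \eqref{eq:contmeso} and the energy equation \eqref{eq:energmeso} over $\T$, then use positivity and Cauchy--Schwarz. The paper's proof cites \eqref{eq:momtsmeso} at this step, which is a slip that you correctly avoid. Two points should be fixed.

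For \eqref{eq:28}, positivity of $\rho$ and $p$ alone, which is all the paper invokes, does not suffice. One needs $R(c)\le(\gamma_{\max}-1)\cv(c)$, and your fallback gives exactly this, since
\[
\frac{R(c)}{\cv(c)}=\frac{\cv_+c\,(R_+/\cv_+)+\cv_-(1-c)\,(R_-/\cv_-)}{\cv_+c+\cv_-(1-c)}\le\max\Bigl(\frac{R_+}{\cv_+},\frac{R_-}{\cv_-}\Bigr)=\gamma_{\max}-1 .
\]
Make this the argument rather than a fallback. The identity $R(c)=(\gamma(c)-1)\cv(c)$, which you claim ``more generally'' for $0\le c\le 1$, is false at intermediate $c$. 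The route through $c\in\{0,1\}$ is also unavailable. Theorem~\ref{thm:bounds} only assumes \eqref{eq:debinitcond}, and the estimates of Section~\ref{sec:estimates} are run on $H^1$ data, for which a $\{0,1\}$-valued $c_0$ would have to be constant.

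For $\cE$, what you actually prove is $\cE\le\overline{\rho_0}\cv_{\max}\overline{\theta_0}+C_0/2$. This gives the stated bound only when $\mu_{\max}\ge 1$, and that is not a normalization you are free to make. So your remark that $C_0/2$ ``already suffices'' is wrong when $\mu_{\max}<1$.

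In that case the stated bound is in fact false. Take the constant data $\rho_0\equiv\overline{\rho_0}$, $\theta_0\equiv\overline{\theta_0}$, $u_0\equiv(C_0/\overline{\rho_0})^{1/2}$, and $c_0$ constant with $\cv(c_0)=\cv_{\max}$. These satisfy all the hypotheses and give $\cE=\overline{\rho_0}\cv_{\max}\overline{\theta_0}+C_0/2$.

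The factor $\mu_{\max}$ is therefore a defect of the statement: it should be replaced by $\max(1,\mu_{\max})$ or dropped. Say so explicitly rather than trying to absorb it. Later estimates only use that $\cE$ is controlled by the data, so nothing downstream is affected.
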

\begin{proof}
Integrating \eqref{eq:contmeso} (resp. \eqref{eq:momtsmeso}) over the torus gives \eqref{eq:26} (resp. \eqref{eq:27}). We deduce \eqref{eq:28}, \eqref{eq:29} from \eqref{eq:27} and the positivity of $\rho$ and $p$. Finally, using Cauchy-Schwartz inequality,
\begin{equation*}
    \int_0^1 \vert\rho u\vert = \sqrt{2}\int_0^1 \sqrt{\rho}\sqrt{\rho}\frac{\vert u\vert}{\sqrt{2}}\leq \sqrt{2}\sqrt{\int_0^1\rho}\sqrt{\int_0^1 \frac{\rho u^2}{2}}\leq  \sqrt{2\cM\cE}.
\end{equation*}
\end{proof}
\begin{remark}
Observe that if $v$ is a constant, the change of unknowns
\begin{equation*}
    (c(t,x),\rho(t,x),u(t,x),E(t,x)) \rightarrow (c(t,x-vt),\rho(t,x-vt), u(t,x-vt)+v, E(t,x-vt))
\end{equation*}
always gives a solution of \eqref{eq:cmeso}--\eqref{eq:energmeso} (it is the Galilean invariance principle). Choosing
\begin{equation*}
    v = -\frac{1}{\cM}\int_0^1\rho_0 u_0
\end{equation*}
we may assume without loss of generality
\begin{equation*}
    \int_0^1\rho_0u_0 = 0.
\end{equation*}
Then, integrating \eqref{eq:contmeso} over the torus, we get
\begin{equation}\label{eq:31}
    \text{for all }t\in[0,T],\quad \int_0^1\rho u(t) = \int_0^1\rho_0 u_0 = 0.
\end{equation}
\end{remark}

The next proposition concerns entropy.
\begin{prop}
\label{prop:2}
There exists some constant $H_1 = H_1(\underline{\rho_0}, \overline{\rho_0}, \underline{\theta_0}, \overline{\theta_0})>0$ such that
\begin{equation}\label{eq:32}
\int_0^T\int_0^1\frac{(\partial_x u)^2}{\theta}+\int_0^T\int_0^1 \frac{(\partial_x \theta)^2}{\theta^2} \leq H_1.
\end{equation}
\end{prop}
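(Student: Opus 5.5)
The plan is to integrate the entropy balance \eqref{eq:entropymeso} over $[0,T]\times\T$, now written with the oscillating coefficients $\cv(c),R(c),\mu(c),\kappa(c)$. The dissipation terms on the right-hand side are exactly the quantities in \eqref{eq:32}, up to the factors $\mu_{\min},\kappa_{\min}$. The remaining work is to bound the total entropy from above at time $T$ and from below at time $0$.

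\emph{Step 1: entropy equation with variable coefficients.} Set $s=\cv(c)\ln\theta-R(c)\ln\rho$. Since $c$ is transported, \eqref{eq:cmeso} gives $\partial_t f(c)+u\partial_x f(c)=0$ for each $f\in\{\cv,R\}$. Combined with \eqref{eq:contmeso}, this yields $\partial_t(\rho f(c)g)+\partial_x(\rho f(c)gu)=\rho f(c)(\partial_t g+u\partial_x g)$ for any smooth $g$.
\begin{itemize}
\item Divide the temperature equation \eqref{eq:thetameso} by $\theta$. Its left-hand side equals $\rho\cv(c)(\partial_t+u\partial_x)\theta$, so this produces $\rho\cv(c)(\partial_t+u\partial_x)\ln\theta$.
\item The term $-R(c)\rho\,\partial_x u$ becomes $R(c)\rho(\partial_t+u\partial_x)\ln\rho$, using \eqref{eq:contmeso}.
\item For the conduction term, $\partial_x(\kappa\partial_x\theta)/\theta=\partial_x(\kappa\partial_x\ln\theta)+\kappa(\partial_x\theta)^2/\theta^2$.
\end{itemize}
Together these give \eqref{eq:entropymeso} with $\mu=\mu(c)$ and $\kappa=\kappa(c)$. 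This is legitimate for the smooth solution, since $\rho,\theta>0$ on $[0,T]$. Integrating over $[0,T]\times\T$ and using periodicity gives
\begin{equation*}
\int_0^T\!\!\int_0^1\frac{\mu(c)(\partial_xu)^2}{\theta}+\frac{\kappa(c)(\partial_x\theta)^2}{\theta^2}=\int_0^1\rho s(T)-\int_0^1\rho_0s_0 .
\end{equation*}
The left-hand side is at least $\min(\mu_{\min},\kappa_{\min})$ times the quantity in \eqref{eq:32}.

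\emph{Step 2: upper bound at time $T$.} I would use $\ln\theta\le\theta$ and $-\rho\ln\rho\le 1/e$. With these,
\begin{equation*}
\int_0^1\rho s(T)\le\int_0^1\cv(c)\rho\theta+R_{\max}/e\le\cE+R_{\max}/e,
\end{equation*}
by \eqref{eq:27} and the positivity of the kinetic energy. Proposition~\ref{prop:1} then bounds $\cE$.

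\emph{Step 3: lower bound at time $0$.} This follows directly from the pointwise bounds \eqref{eq:debmidinitcond} and \eqref{eq:finmidinitcond}:
\begin{equation*}
\int_0^1\rho_0s_0\ge-\overline{\rho_0}\bigl(\cv_{\max}|\ln\underline{\theta_0}|+\cv_{\max}|\ln\overline{\theta_0}|+R_{\max}|\ln\underline{\rho_0}|+R_{\max}|\ln\overline{\rho_0}|\bigr).
\end{equation*}

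The only delicate point is Step 1: checking that the oscillating coefficients do not spoil the entropy identity. This works precisely because $c$ is transported by the same velocity as the mass. One caveat concerns the dependence of $H_1$. Through $\cE$ it also depends on $C_0$, since the kinetic part of $\cE$ involves $\int_0^1\rho_0u_0^2\le C_0$. I would therefore state $H_1$ as depending on the data through $\underline{\rho_0},\overline{\rho_0},\underline{\theta_0},\overline{\theta_0}$ and $\cE$, which is bounded as in Proposition~\ref{prop:1}.
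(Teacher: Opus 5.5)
Your proof is correct and is essentially the paper's argument: integrate the entropy balance over $[0,T]\times\T$, control the terminal entropy through energy conservation \eqref{eq:27}, and control the initial entropy through the pointwise bounds on $\rho_0,\theta_0$. The only difference is cosmetic: the paper packages the boundary terms via the nonnegative function $h(x)=x-1-\ln x$, which obliges it to also bound the non-conserved quantity $\int_0^1 R(c)$, while your pointwise inequalities $\ln\theta\le\theta$ and $-\rho\ln\rho\le 1/e$ bypass this. Your caveat that $H_1$ in fact depends on $\cE$, hence on $C_0$, applies equally to the paper's own proof.
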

\begin{proof}
Considering the nonnegative function $h:]0,+\infty[\rightarrow \R$ defined as
\noindent

\vspace{0.5cm}
\begin{minipage}{0.4\textwidth}
\begin{equation*}
\text{for all }x \in ]0,+\infty[, \quad h(x) = x - 1 - \ln(x),
\end{equation*}
\end{minipage}
\hfill
\begin{minipage}{0.55\textwidth}
\begin{minipage}{0.4\linewidth}
\hspace{2.5em}
\begin{tikzpicture}
\begin{axis}[
    height=3cm,
    width=\linewidth,
    xmin=0,
    xmax=8,
    axis lines=middle,
    xtick={1},
    ytick=\empty
]
\addplot[mark=none, line width=1pt] table {function_data.txt};
\end{axis}
\end{tikzpicture}
\end{minipage}
\begin{minipage}{0.55\linewidth}
\small\textbf{Figure 1:} Representation of the function \(h\)
\end{minipage}

\end{minipage}

\noindent we obtain from \eqref{eq:entropy}
\begin{equation}\label{eq:33}
\rho \cv h(\theta) +\rho R h(1/\rho) + \rho s = \cv\rho\theta - \cv\gamma\rho + R.
\end{equation}
Note that
\begin{equation}\label{eq:massccons}
\frac{d}{dt}\int_0^1\rho c = 0.
\end{equation}
As a consequence, differentiating \eqref{eq:33} in time then integrating over the torus, using \eqref{eq:entropymeso} then \eqref{eq:26},\eqref{eq:massccons}, we obtain

\begin{equation}\label{eq:34}
\frac{d}{dt}\int_0^1 (\rho \cv h(\theta) + \rho R h(1/\rho)) + \int_0^1 \mu\frac{(\partial_x u)^2}{\theta} + \int_0^1\kappa\frac{(\partial_x \theta)^2}{\theta^2} = \frac{d}{dt}\int_0^1 \rho \cv\theta + \frac{d}{dt}\int_0^1 R 
\end{equation}
hence, from \eqref{eq:27},
\begin{equation}
\frac{d}{dt}\int_0^1 (\rho \cv h(\theta) + \rho R h(1/\rho)) + \frac{d}{dt}\int_0^1 \frac{\rho u^2}{2}+ \int_0^1 \mu\frac{(\partial_x u)^2}{\theta} + \int_0^1\kappa\frac{(\partial_x \theta)^2}{\theta^2} = \frac{d}{dt}\int_0^1 R .
\end{equation}
Finally, integrating \eqref{eq:34} on $[0,T]$, we get
\begin{align}
\int_0^1 (\rho \cv h(\theta) + \rho R h(1/\rho))(T) &+ \int_0^1\frac{\rho u^2}{2}(T) + \int_0^T\int_0^1 \mu \frac{(\partial_x u)^2}{\theta} + \int_0^T\int_0^1\kappa\frac{(\partial_x\theta)^2}{\theta^2} \nonumber\\&= \int_0^1 \frac{\rho_0 u_0^2}{2} +\int_0^1 R - \int_0^1 R_0 + \int_0^1 (\rho_0 \cv h(\theta_0) + \rho_0 R h(1/\rho_0)) \nonumber\\&\leq {\cal E} + R_{\max} + \cv_{\max} {\cal M} \sup_{[\underline{\theta_0},\overline{\theta_0}]} h  + R_{\max} {\cal M} \sup_{[1/\overline{\rho_0},1/\underline{\rho_0}]} h. \nonumber\end{align}
and in particular the result, dividing this last inequality by $\min(\mu_{\min},\kappa_{\min})$, and using the fact that $h\geq 0$.
\end{proof}
The next bound provides information on the Cauchy stress. It has been widely used for both barotropic and non-isentropic (see \cite{Serre1991}).
\begin{prop}\label{prop:3}
There exists some $H_2 = H_2({\cal E}, {\cal M}, T)>0$ such that
\begin{equation}\label{eq:35}
\text{for almost all }(t,x)\in [0,T]\times \T,\quad \vert D_t^{-1}(\sigma)(t,x)\vert\leq H_2.
\end{equation} 
\end{prop}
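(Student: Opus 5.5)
The plan is to write the Cauchy stress as a material derivative of a bounded potential built from the momentum. I read $D_t = \partial_t + u\partial_x$, with $D_t^{-1}(\sigma)(t,x)$ the integral of $\sigma$ along the characteristic of $u$ from time $0$ to time $t$, as in Appendix \ref{app:not}. We work in the Galilean frame of the Remark, so \eqref{eq:31} gives $\int_0^1\rho u(t)=0$.

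\textbf{Step 1 (potential).} Set $\psi(t,x)=\int_0^x \rho u(t,y)\,dy$, which is $1$-periodic by \eqref{eq:31}, and let $\tilde\psi=\psi-\int_0^1\psi$ be its zero-mean version. By \eqref{eq:30},
\begin{equation*}
\Vert\tilde\psi(t)\Vert_\infty\leq 2\int_0^1|\rho u|(t)\leq 2\sqrt{2\cM\cE}
\end{equation*}
for all $t$.

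\textbf{Step 2 (equation for the potential).} Integrating \eqref{eq:momtsmeso} from $0$ to $x$ gives $\partial_t\psi=-\rho u^2+\sigma+a(t)$, where $a(t)=\rho u^2(t,0)-\sigma(t,0)$. This $a$ cannot be controlled pointwise, so I remove it by subtracting the mean. Since $\partial_x\tilde\psi=\rho u$, so that $u\partial_x\tilde\psi=\rho u^2$, this yields
\begin{equation*}
D_t\tilde\psi=\sigma-\int_0^1\sigma+\int_0^1\rho u^2 .
\end{equation*}
Integrating along the characteristic $X(s)$ through $(t,x)$ then gives
\begin{equation*}
D_t^{-1}(\sigma)(t,x)=\tilde\psi(t,x)-\tilde\psi(0,X(0))+\int_0^t\Big(\int_0^1\sigma-\int_0^1\rho u^2\Big)ds .
\end{equation*}

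\textbf{Step 3 (the spatial means).} This is the key point, and the one I expect to be the only real obstacle. We have $\int_0^1\sigma=\int_0^1\mu(c)\partial_x u-\int_0^1 p$. The pressure part satisfies $0\leq\int_0^1 p\leq(\gamma_{\max}-1)\cE$ by \eqref{eq:28}. The difficulty is that $\mu(c)$ is discontinuous, so $\int_0^1\mu\partial_x u$ cannot simply be integrated by parts to zero. However, $\mu(c)$ is transported by \eqref{eq:cmeso}, so $\partial_t\mu(c)+u\partial_x\mu(c)=0$. For the strong solutions considered here this gives
\begin{equation*}
\int_0^1\mu\partial_x u=-\int_0^1 u\partial_x\mu=\frac{d}{dt}\int_0^1\mu(c).
\end{equation*}
Its time integral is therefore bounded by $\mu_{\max}-\mu_{\min}$, because $\mu(c)\in[\mu_{\min},\mu_{\max}]$ by \eqref{eq:debinitcond}, which is propagated by transport. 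Finally, $0\leq\int_0^1\rho u^2\leq 2\cE$ by \eqref{eq:29}.

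\textbf{Conclusion.} Collecting the bounds from Steps 1–3,
\begin{equation*}
|D_t^{-1}(\sigma)(t,x)|\leq 4\sqrt{2\cM\cE}+\mu_{\max}+T(\gamma_{\max}+1)\cE=:H_2 .
\end{equation*}
This holds for all $(t,x)\in[0,T]\times\T$, with $H_2$ depending only on $\cE$, $\cM$, $T$ and the fixed constants, as \eqref{eq:35} requires.
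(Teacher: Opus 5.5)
Your proof is correct and is essentially the paper's argument. Your $\tilde\psi$ is exactly $\partial_x^{-1}(\rho u)$, and the paper likewise derives $D_t\partial_x^{-1}(\rho u)=\sigma-\int_0^1\sigma+\int_0^1\rho u^2$, rewrites $\int_0^1\sigma$ as $\frac{d}{dt}\int_0^1\mu(c)-\int_0^1 p$ using the transport of $c$, and integrates along characteristics with the same bounds \eqref{eq:28}--\eqref{eq:30}. The only difference is cosmetic: \eqref{eq:24} gives $\Vert\partial_x^{-1}(\rho u)\Vert_\infty\leq\int_0^1|\rho u|$ directly, so the paper obtains $2\sqrt{2{\cal M}{\cal E}}$ where you obtain $4\sqrt{2{\cal M}{\cal E}}$, which does not matter.
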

\begin{proof}
Applying $\partial_x^{-1}$ to \eqref{eq:momtsmeso}, we get using \eqref{eq:25}
\begin{equation}\label{eq:36}
\partial_t \partial_x^{-1}(\rho u) + \partial_x^{-1}\partial_x(\rho u^2) = \sigma - \int_0^1 \sigma.
\end{equation}
Moreover, due to \eqref{eq:cmeso},
\begin{equation}
\int_0^1\sigma = \int_0^1\mu(c)\partial_x u - \int_0^1 p = \frac{d}{dt}\int_0^1 \mu(c) - \int_0^1 p = D_t\left(\int_0^1\mu - \int_0^t\int_0^1 p\right).
\end{equation}
And, due to \eqref{eq:31} and \eqref{eq:25},
\begin{equation}\label{eq:37}
\partial_x^{-1}\partial_x(\rho u^2) = \rho u^2 - \int_0^1\rho u^2 = u\partial_x\partial_x^{-1}(\rho u) - \int_0^1\rho u^2.
\end{equation}
Then \eqref{eq:37} in \eqref{eq:36} gives
\begin{equation}\label{eq:38}
D_t \partial_x^{-1}(\rho u) = \sigma - D_t\left(\int_0^1\mu - \int_0^t\int_0^1 p - \int_0^t\int_0^1\rho u^2\right)
\end{equation}
Finally, applying $D_t^{-1}$ to \eqref{eq:38}, we get using \eqref{eq:23},
\begin{equation*}
D_t^{-1}\sigma = D_t^{-1} D_t \partial_x^{-1}(\rho u) + \int_0^1\mu - \int_0^1\mu_0 - \int_0^t\int_0^1 p -\int_0^t\int_0^1\rho u^2,
\end{equation*}
thus from \eqref{eq:20}, \eqref{eq:21}, \eqref{eq:28}, \eqref{eq:29}, \eqref{eq:30}, \eqref{eq:31} and \eqref{eq:24},
\begin{align}
\vert D_t^{-1} \sigma\vert &\leq \vert \partial_x^{-1}(\rho u)\vert + \vert R_t\partial_x^{-1}(\rho u)\vert + \left\vert\int_0^1 \mu - \int_0^1\mu_0\right\vert + \left\vert\int_0^t\int_0^1 p\right\vert + \left\vert\int_0^t\int_0^1\rho u^2\right\vert \nonumber\\&\leq \int_0^1 \vert \rho u\vert + \sup_{x\in\T}\vert\partial_x^{-1}(\rho_0 u_0)(x)\vert + \mu_{\max} + T(\gamma_{\max}+1){\cal E} \nonumber\\&\leq 2\sqrt{2{\cal M}{\cal E}} + \mu_{\max} + T(\gamma_{\max}+1){\cal E}. \nonumber\end{align}
\end{proof}

From Proposition \ref{prop:3}, we deduce an upper bound on $\rho$.
\begin{prop}\label{prop:4}
There exists some $\overline{\rho} = \overline{\rho}({\cal E}, {\cal M}, \overline{\rho_0},T)>0$ such that
\begin{equation}\label{eq:39}
\text{for almost all }(t,x)\in [0,T]\times \T,\quad \rho(t,x)\leq \overline{\rho}.
\end{equation} 
\end{prop}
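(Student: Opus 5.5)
Following the classical Kazhikhov--Shelukhin route, we work along particle trajectories and combine the continuity equation with the bound on $D_t^{-1}\sigma$ from Proposition \ref{prop:3}. Here $D_t=\partial_t+u\partial_x$ is the material derivative and $D_t^{-1}$ its inverse along characteristics, as in Appendix \ref{app:not}. Along a trajectory $X(t)$ the continuity equation \eqref{eq:contmeso} gives $D_t\ln\rho = -\partial_x u$. Since $c$ is transported by \eqref{eq:cmeso}, so is $\mu(c)$, and we may write
\begin{equation*}
D_t(\mu(c)\ln\rho) = -\mu(c)\partial_x u = -\sigma - p .
\end{equation*}
Integrating along the characteristic, i.e.\ applying $D_t^{-1}$, we obtain
\begin{equation*}
\mu(c)\ln\rho(t,X(t)) = \mu(c_0)\ln\rho_0(X(0)) - D_t^{-1}(\sigma)(t,X(t)) - \int_0^t p(s,X(s))\,ds .
\end{equation*}

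The key observation is that the pressure $p=R(c)\rho\theta$ is nonnegative, because $\rho>0$ and $\theta>0$ for the strong solution under consideration. The last term therefore has a favourable sign and can simply be dropped when seeking an upper bound. By Proposition \ref{prop:3} we have $|D_t^{-1}\sigma|\le H_2$, and $\ln\rho_0\le \ln\overline{\rho_0}$. Hence
\begin{equation*}
\mu_{\min}\ln\rho \le \mu(c)\ln\rho \le \mu_{\max}\,|\ln \overline{\rho_0}| + H_2
\end{equation*}
whenever $\ln\rho\ge 0$. This yields
\begin{equation*}
\overline{\rho} = \max\left(1,\exp\Big((\mu_{\max}|\ln\overline{\rho_0}| + H_2)/\mu_{\min}\Big)\right),
\end{equation*}
which depends only on ${\cal E},{\cal M},\overline{\rho_0},T$, as required.

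The main point needing care is the rigorous justification of the Lagrangian identity above. Depending on how $D_t^{-1}$ is normalized in the appendix (it may include the initial value, or the correction $R_t$ seen in Proposition \ref{prop:3}), one must check that $D_t^{-1}D_t f = f - f_0\circ$(backward flow), or the analogous formula, holds for $f=\mu(c)\ln\rho$. For a strong solution the flow of $u$ is well defined, since $u$ is Lipschitz in $x$, so the characteristics exist and the computation is legitimate. The bound is then obtained pointwise for almost every $(t,x)$ by following the characteristic through $(t,x)$ back to time $0$.

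A second subtlety is that $\mu(c)$ is only $L^\infty$ and discontinuous in $x$. This is harmless, because $c$ is constant along characteristics, so $\mu(c)$ acts as a constant along each trajectory.
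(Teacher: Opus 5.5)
Your argument is correct and is essentially the paper's proof. The paper likewise writes $D_t\ln\rho=-(\sigma+p)/\mu$, applies $D_t^{-1}$ (with $1/\mu(c)$ commuting with $D_t^{-1}$ because $c$ is transported), drops the term $D_t^{-1}p\ge 0$ and uses $|D_t^{-1}\sigma|\le H_2$; the only difference is that it divides by $\mu(c)$ rather than multiplying, so it gets the slightly sharper constant $\overline{\rho}=\overline{\rho_0}\exp(H_2/\mu_{\min})$ without your case split on the sign of $\ln\rho$.
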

\begin{proof}
From \eqref{eq:contmeso} and \eqref{eq:defsigma} we obtain
\begin{equation}\label{eq:40}
D_t\ln\rho = -\partial_x u = \frac{-\sigma - p}{\mu}.
\end{equation}
Hence, applying $D_t^{-1}$ to \eqref{eq:40} then using \eqref{eq:20}, \eqref{eq:21}, \eqref{eq:rigolo} and \eqref{eq:35},
\begin{align}
\ln \rho &= R_t\ln\rho -\frac{D_t^{-1}\sigma}{\mu} - \frac{D_t^{-1} p}{\mu}\label{eq:41}
\\&\leq \ln \overline{\rho_0} + \frac{\vert D_t^{-1}\sigma\vert}{\mu} \nonumber\\&\leq \ln \overline{\rho_0} + \frac{H_2}{\mu_{\min}} \nonumber\end{align}
because $D_t^{-1} p \geq 0$. Finally, passing to the exponential,
\begin{equation*}
\text{for almost all }(t,x)\in [0,T]\times\T,\quad\rho(t,x) \leq \overline{\rho_0}\exp{(H_2/\mu_{\min})}.
\end{equation*}
\end{proof}
A link between the lower bound of $\rho$ and the upper bound on $\theta$ is then obtained.
\begin{prop}
\label{prop:6}
There exists some $H_3 = H_3({\cal E}, \underline{\rho_0}, \overline{\rho_0},T)>0$ such that
\begin{equation}\label{eq:48}
\text{for almost all }(t,x)\in [0,T]\times \T,\quad 1/\rho(t,x) \leq H_3\left(1+\int_0^t \Vert\theta\Vert_\infty\right).
\end{equation}
\end{prop}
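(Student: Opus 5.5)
We start again from the Lagrangian identity \eqref{eq:41}, this time keeping track of the pressure term, which now gives the lower bound. Applying $D_t^{-1}$ to \eqref{eq:40} and using that $\mu(c)$ is transported by $u$ (so it commutes with $D_t^{-1}$), we obtain along each particle path
\begin{equation*}
\ln\rho = R_t\ln\rho - \frac{D_t^{-1}\sigma}{\mu} - \frac{D_t^{-1}p}{\mu}.
\end{equation*}
Since \eqref{eq:35} controls $D_t^{-1}\sigma$, a naive lower bound would be $\ln\rho\geq \ln\underline{\rho_0} - H_2/\mu_{\min} - D_t^{-1}p/\mu_{\min}$. This is too weak, because $D_t^{-1}p = \int_0^t R\rho\theta$ along the characteristic grows in $\rho$ and we would only obtain an exponential of $\int_0^t\Vert\rho\theta\Vert_\infty$. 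We therefore treat the identity as an ODE in time along characteristics rather than estimating it crudely.

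Set $y(t)=\rho(t,X(t))$ along a characteristic $X$, and introduce the integrating factor
\begin{equation*}
B(t)=\exp\left(\frac{R_t\ln\rho - D_t^{-1}\sigma}{\mu}\right) = \rho_0(X(0))\exp\left(-\frac{D_t^{-1}\sigma}{\mu}\right).
\end{equation*}
By \eqref{eq:35} and \eqref{eq:debmidinitcond}, $B$ lies between $\underline{\rho_0}e^{-H_2/\mu_{\min}}$ and $\overline{\rho_0}e^{H_2/\mu_{\min}}$. The identity then reads $y(t) = B(t)\exp\bigl(-\mu^{-1}\int_0^t R y\theta\bigr)$. Writing $Z(t)=\exp\bigl(\mu^{-1}\int_0^t R y\theta\bigr)$, we get
\begin{equation*}
Z' = \mu^{-1}R\,y\,\theta\, Z = \mu^{-1}R\,\theta\,B .
\end{equation*}
Integrating gives $Z(t) = 1+\mu^{-1}R\int_0^t B\theta \leq 1 + \frac{R_{\max}}{\mu_{\min}}\overline{\rho_0}e^{H_2/\mu_{\min}}\int_0^t\Vert\theta\Vert_\infty$. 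Hence
\begin{equation*}
\frac1y = \frac{Z}{B}\leq \frac{e^{H_2/\mu_{\min}}}{\underline{\rho_0}}\left(1+\frac{R_{\max}\overline{\rho_0}e^{H_2/\mu_{\min}}}{\mu_{\min}}\int_0^t\Vert\theta\Vert_\infty\right),
\end{equation*}
which is \eqref{eq:48} with $H_3$ depending on $\mathcal E,\mathcal M,\underline{\rho_0},\overline{\rho_0},T$. Since $\mathcal M\leq\overline{\rho_0}$, the dependence on $\mathcal M$ can be absorbed.

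The main point to check is that the coefficients $\mu(c)$ and $R(c)$, although oscillating, are constant along characteristics by \eqref{eq:cmeso}. This makes the division by $\mu$ and the ODE manipulation legitimate pointwise along each path. Once that is in place, the rest is the Gronwall-type integrating-factor computation above, which is exact rather than lossy. We work with the strong solution, so characteristics are well defined; the bound then holds almost everywhere.
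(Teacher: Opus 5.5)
Your proof is correct and is the paper's argument written along characteristics. The paper sets $B=\exp(R_t\ln\rho - D_t^{-1}\sigma/\mu)$, derives $D_t\exp(D_t^{-1}p/\mu)=BR\theta/\mu$, and obtains $B/\rho=1+D_t^{-1}(BR\theta/\mu)$; this is exactly your $Z=1+\mu^{-1}R\int_0^t B\theta$ and $1/y=Z/B$. One small slip: in your first formula for $B$, the term $R_t\ln\rho$ should not be divided by $\mu$. Your second expression $\rho_0(X(0))\exp(-D_t^{-1}\sigma/\mu)$, and the bounds you state for $B$, are the correct ones.
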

\begin{proof}
Applying the exponential function to \eqref{eq:41}, we obtain
\begin{equation}\label{eq:49}
\rho \exp(D_t^{-1} p/\mu) = \exp(R_t\ln \rho - D_t^{-1}\sigma/\mu) =: B.
\end{equation}
Note that, from \eqref{eq:21} and \eqref{eq:35} we get
\begin{equation}\label{eq:50}
\text{for almost all }(t,x)\in [0,T]\times \T,\quad 0 < \underline{B}\leq B(t,x)\leq \overline{B}
\end{equation}
where
\begin{equation*}
\underline{B} = \underline{\rho_0}\exp(- H_2/\mu_{\min}),\quad \overline{B}=\overline{\rho_0}\exp(H_2/\mu_{\min}).
\end{equation*}
Multiplying now \eqref{eq:49} by $R\theta/\mu$, we obtain using \eqref{eq:cmeso}
\begin{equation*}
D_t \exp(D_t^{-1} p/\mu) = B R\theta/\mu
\end{equation*}
then applying $D_t^{-1}$,
\begin{equation}\label{eq:51}
\exp(D_t^{-1} p/\mu) = 1 + D_t^{-1}(B R\theta/\mu),
\end{equation}
because $D_t^{-1} p(0,\cdot) = 0$.
Combining \eqref{eq:51} with \eqref{eq:49}, we get
\begin{equation}\label{eq:52}
B/\rho = 1 + D_t^{-1}(BR\theta/\mu).
\end{equation}
Dividing \eqref{eq:52} by $B$, we finally get using \eqref{eq:50}, the non negativity of $D_t^{-1}$ and \eqref{eq:22},
\begin{equation*}
1/\rho = 1/B + D_t^{-1}(B R\theta/\mu)/B \leq \frac{1}{\underline{B}}\left(1 + (\overline{B}R_{\max}/\mu_{\min}) \int_0^t\Vert\theta\Vert_\infty\right)\leq H_3\left(1 + \int_0^t \Vert \theta\Vert_\infty\right)
\end{equation*}
for all $t\in[0,T]$ and with
\begin{equation*}
H_3 = \max((\overline{B}R_{\max}/\mu_{\min})/\underline{B}, 1/\underline{B}).
\end{equation*}
\end{proof}
\begin{prop}\label{prop:11}
There exists some $H_4 = H_4({\cal E}, H_1,\underline{\rho_0}, \overline{\rho_0}, T)>0$ such that
\begin{equation}\label{eq:69}
\int_0^T\Vert \theta\Vert_{\infty} \leq H_4.
\end{equation}
\end{prop}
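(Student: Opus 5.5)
We follow the classical Kazhikhov--Shelukhin argument: control the oscillation of $\theta$ in space by the entropy dissipation of Proposition~\ref{prop:2}, and control its mean value by the energy through a lower bound on $\rho$ from Proposition~\ref{prop:6}. This produces a Gronwall-type inequality for $\int_0^t\Vert\theta\Vert_\infty$.

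\textbf{Oscillation of $\theta$.} For each $t$ and every $x,y\in\T$,
\begin{equation*}
\vert\sqrt{\theta(t,x)}-\sqrt{\theta(t,y)}\vert\leq \frac12\int_0^1\frac{\vert\partial_x\theta\vert}{\sqrt\theta}\leq \frac12\left(\int_0^1\theta\right)^{1/2}\left(\int_0^1\frac{(\partial_x\theta)^2}{\theta^2}\right)^{1/2}.
\end{equation*}
This immediately gives
\begin{equation*}
\Vert\theta\Vert_\infty\leq 2\min_{\T}\theta + \tfrac12\Big(\int_0^1\theta\Big) V(t),\qquad V(t)=\int_0^1\frac{(\partial_x\theta)^2}{\theta^2},
\end{equation*}
and $\int_0^T V\leq H_1$ by \eqref{eq:32}.

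\textbf{Bounding $\min\theta$ and $\int_0^1\theta$.} For the minimum, note that $\min\theta\int_0^1\rho\leq\int_0^1\rho\theta\leq\cE/\cv_{\min}$. Since $\int_0^1\rho=\cM$, this gives $\min\theta\leq \cE/(\cv_{\min}\cM)$.

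For $\int_0^1\theta$, we use Proposition~\ref{prop:6}:
\begin{equation*}
\int_0^1\theta=\int_0^1\frac{\rho\theta}{\rho}\leq \frac{\cE}{\cv_{\min}}\,\sup_x\frac1\rho\leq \frac{\cE H_3}{\cv_{\min}}\Big(1+\int_0^t\Vert\theta\Vert_\infty\Big).
\end{equation*}

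\textbf{Gronwall step.} Combining the two previous steps with $y(t)=\int_0^t\Vert\theta\Vert_\infty$, we get
\begin{equation*}
y'(t)\leq C_a + C_b V(t)\,(1+y(t)),
\end{equation*}
with constants depending only on $\cE,\cM,H_3$. Gronwall's inequality then yields
\begin{equation*}
1+y(T)\leq (1+C_aT)\exp\Big(C_b\int_0^T V\Big)\leq (1+C_aT)e^{C_bH_1},
\end{equation*}
which is $H_4$. The dependence on $\cM$ is harmless because $\cM\geq\underline{\rho_0}$.

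\textbf{Main obstacle.} The delicate point is the growth of $\int_0^1\theta$: the only available bound goes through $1/\rho$, which itself depends on $\theta$. The structure works only because this dependence is linear in $y$, and it enters multiplied by $V$, which is integrable in time. This is exactly what makes the argument close through Gronwall rather than requiring a smallness condition.

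Alternatively, one could avoid dividing by $\rho$ by writing $\int_0^1\theta\leq\Vert\theta\Vert_\infty$ and absorbing. However, the term $\tfrac12\Vert\theta\Vert_\infty V$ cannot be absorbed when $V$ is large, so the Gronwall route through Proposition~\ref{prop:6} is the one to pursue. No oscillatory coefficient causes trouble here, since only $\cv_{\min}$ enters.
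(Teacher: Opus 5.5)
Your proof is correct and follows essentially the paper's argument. Both combine the $\sqrt{\theta}$ oscillation estimate controlled by the entropy dissipation $H_1$, the bound on $1/\rho$ from Proposition~\ref{prop:6} to handle $\rho\theta/\rho$, the energy bound for a reference value of $\theta$, and Grönwall on $1+\int_0^t\Vert\theta\Vert_\infty$. The only cosmetic difference is that you pass through $\int_0^1\theta$ and take $y$ at the minimum of $\theta$, whereas the paper inserts $\sqrt{\rho\theta}/\sqrt{\rho}$ pointwise and then averages over $y$ against $\rho(t,y)$.
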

\begin{proof}
Let us start by the equality, for almost all $x,y\in \T$, $t\in [0,T]$,
\begin{equation}\label{eq:70}
\sqrt{\theta}(t,x) = \sqrt{\theta}(t, y) + \int_y^x \frac{\partial_z \theta}{2\sqrt{\theta}}(t, z) dz.
\end{equation}
Observe that from \eqref{eq:48} we have, for almost all $t\in[0,T]$,
\begin{equation*}
\left\vert\frac{\partial_z \theta}{\sqrt{\theta}}\right\vert = \left\vert\frac{\partial_z \theta}{\theta}\right\vert\sqrt{\rho\theta}\frac{1}{\sqrt{\rho}}\leq \left\vert\frac{\partial_z \theta}{\theta}\right\vert\sqrt{\rho\theta} \sqrt{H_3}\sqrt{1+\int_0^t\Vert\theta\Vert_\infty}.
\end{equation*}
Thus by Hölder's inequality and \eqref{eq:28}
\begin{equation}\label{eq:71}
\left\vert\int_y^x \frac{\partial_z \theta}{2\sqrt{\theta}}\right\vert \leq \frac{1}{2}\sqrt{\frac{{\cal E}H_3}{\cv_{\min}}}\sqrt{\int_0^1 \frac{(\partial_z \theta)^2}{\theta^2}} \sqrt{1+\int_0^t\Vert\theta\Vert_\infty}.
\end{equation}
Then, taking the square in \eqref{eq:70}, using Young's inequality then \eqref{eq:71}, we obtain
\begin{align}
\theta(t,x) \leq 2\theta(t,y) + \frac{1}{2}\frac{{\cal E}H_3}{\cv_{\min}}\int_0^1\frac{(\partial_z\theta)^2}{\theta^2}\left(1+\int_0^t\Vert\theta\Vert_\infty\right). \nonumber
\end{align}
Multiplying this last inequality by $\rho(t,y)$, then integrating over the torus in $y$, we obtain from \eqref{eq:26} and \eqref{eq:28}
\begin{equation}\label{eq:72}
{\cal M}\theta(t,x) \leq \frac{2{\cal E}}{\cv_{\min}} + \frac{1}{2}\frac{{\cal M}{\cal E}H_3}{\cv_{\min}}\int_0^1 \frac{(\partial_z \theta)^2}{\theta^2}\left(1 + \int_0^t\Vert \theta\Vert_\infty\right).
\end{equation}
Finally, dividing \eqref{eq:72} by $\cal M$ and passing to the sup on space at the left hand side, we obtain
\begin{equation*}
\Vert \theta\Vert_\infty(t) \leq \frac{2{\cal E}}{\cv_{\min} {\cal M}} + \frac{1}{2}\frac{{\cal E}H_3}{\cv_{\min}}\int_0^1 \frac{(\partial_z\theta)^2}{\theta^2}\left(1+\int_0^t\Vert\theta\Vert_\infty\right).
\end{equation*}
Using Grönwall's lemma then \eqref{eq:32}, we get
\begin{equation}\label{eq:73}
1 + \int_0^t\Vert \theta\Vert_\infty \leq  \left(1 + \frac{2{\cal E}}{\cv_{\min} {\cal M}} T\right)\exp\left(\frac{{\cal E} H_3 H_1}{2\cv_{\min}}\right)
\end{equation}
thus the result.
\end{proof}
Combining Proposition \ref{prop:6} and Proposition \ref{prop:11}, the proof of the lower bound on the density is straightforward:
\begin{prop}\label{prop:rholower}
There exists some $\underline{\rho} = \underline{\rho}({\cal E}, H_1, {\cal M}, \underline{\rho_0}, \overline{\rho_0}, T)>0$ such that
\begin{equation}\label{eq:rholower}
\text{for almost all }(t,x)\in [0,T]\times \T,\quad \rho(t,x)\geq \underline{\rho}.
\end{equation} 
\end{prop}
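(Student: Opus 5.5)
The lower bound follows by inserting the time-integrated bound on the temperature into the pointwise control of $1/\rho$. No new estimate is needed; we only chain Proposition \ref{prop:6} with Proposition \ref{prop:11}.

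\textbf{Step 1.} Proposition \ref{prop:6} gives, for almost all $(t,x)\in[0,T]\times\T$,
\begin{equation*}
1/\rho(t,x) \leq H_3\left(1+\int_0^t \Vert\theta\Vert_\infty\right),
\end{equation*}
with $H_3=H_3({\cal E},\underline{\rho_0},\overline{\rho_0},T)$.

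\textbf{Step 2.} Since $t\leq T$ and $\Vert\theta\Vert_\infty\geq 0$, we have $\int_0^t\Vert\theta\Vert_\infty\leq\int_0^T\Vert\theta\Vert_\infty$. Proposition \ref{prop:11} bounds the latter by $H_4({\cal E},H_1,\underline{\rho_0},\overline{\rho_0},T)$. Hence, for almost all $(t,x)\in[0,T]\times\T$,
\begin{equation*}
1/\rho(t,x)\leq H_3(1+H_4).
\end{equation*}

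\textbf{Step 3.} Set
\begin{equation*}
\underline{\rho}=\frac{1}{H_3(1+H_4)}>0.
\end{equation*}
This constant depends only on ${\cal E},H_1,\underline{\rho_0},\overline{\rho_0},T$, together with the fixed physical constants. Listing ${\cal M}$ among the dependencies, as in the statement, is harmless: the proof of Proposition \ref{prop:11} actually uses ${\cal M}$ in \eqref{eq:73}, so $H_4$ depends on it anyway. Positivity of $\rho$ for the strong solution guarantees that taking reciprocals is legitimate, which yields \eqref{eq:rholower}.

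\textbf{Main point of attention.} There is essentially no obstacle. The only thing to keep track of is that the constants are uniform in the oscillation of $c_0$ and $\rho_0$. This holds because $H_3$ and $H_4$ only involve $\mu_{\min}$, $R_{\max}$, $\cv_{\min}$ and similar constants, never derivatives of the coefficients.
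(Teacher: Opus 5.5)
Your proof is correct and is the paper's argument: the paper just states that the lower bound follows by combining Proposition \ref{prop:6} with Proposition \ref{prop:11}, which gives $\underline{\rho} = 1/(H_3(1+H_4))$ exactly as you write. Your remark that $H_4$ already depends on ${\cal M}$ through \eqref{eq:73} is also accurate.
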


A lower bound is then set for the temperature.
\begin{prop}\label{prop:5}
There exists some $\underline{\theta} = \underline{\theta}({\cal E}, {\cal M}, \overline{\rho_0}, T, \underline{\theta_0})$ such that
\begin{equation*}
\text{for almost all }(t,x)\in [0,T]\times\T,\quad \theta(t,x)\geq \underline{\theta}.
\end{equation*}
\end{prop}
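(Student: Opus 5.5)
Following Kazhikhov's classical argument, we obtain the lower bound on $\theta$ from a maximum principle applied to $1/\theta$ along particle paths, using the temperature equation \eqref{eq:thetameso}. Combining \eqref{eq:thetameso} with \eqref{eq:contmeso} and \eqref{eq:cmeso} (which give $D_t\cv(c)=0$ and $D_t\rho=-\rho\partial_x u$), we rewrite the temperature equation in nonconservative form:
\begin{equation*}
\cv(c)\rho D_t\theta = \mu(c)(\partial_x u)^2 - R(c)\rho\theta\partial_x u + \partial_x(\kappa(c)\partial_x\theta).
\end{equation*}
Set $w = 1/\theta$. Since $D_t w = -w^2 D_t\theta$ and
\begin{equation*}
\partial_x(\kappa\partial_x w) = -w^2\partial_x(\kappa\partial_x\theta) + 2\kappa\,\frac{(\partial_x\theta)^2}{\theta^3},
\end{equation*}
we obtain
\begin{equation*}
\cv\rho D_t w - \partial_x(\kappa\partial_x w) = -\mu w^2(\partial_x u)^2 + R\rho w\,\partial_x u - 2\kappa\frac{(\partial_x\theta)^2}{\theta^3}.
\end{equation*}
The last term is nonpositive. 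For the first two terms we complete the square,
\begin{equation*}
-\mu w^2(\partial_x u)^2 + R\rho w\,\partial_x u \leq \frac{R^2\rho^2}{4\mu}.
\end{equation*}
Using $\rho\leq\overline{\rho}$ from Proposition \ref{prop:4}, the right-hand side of the $w$-equation is bounded above by $\cv\rho\,\frac{R_{\max}^2\overline{\rho}}{4\mu_{\min}\cv_{\min}}=:\cv\rho K$. This gives a differential inequality of the form
\begin{equation*}
\cv\rho\, D_t w - \partial_x(\kappa\partial_x w)\leq \cv\rho K,
\end{equation*}
which depends only on $\cM,\cE,\overline{\rho_0},T$, as the statement requires.

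Next we compare $w$ with the spatially constant supersolution $W(t)=1/\underline{\theta_0}+Kt$. The difference $v = w-W$ satisfies
\begin{equation*}
\cv\rho D_t v - \partial_x(\kappa\partial_x v)\leq 0,\qquad v(0)\leq 0.
\end{equation*}
Multiplying by $v_+$ and integrating over $\T$, we use the continuity equation to write $\int \cv\rho\, v_+ D_t v = \frac{d}{dt}\int \cv\rho\, v_+^2/2$. This uses $D_t\cv=0$ and $\partial_t\rho+\partial_x(\rho u)=0$, so the transport terms integrate to zero on the torus. The diffusion term contributes $\int\kappa(\partial_x v_+)^2\geq 0$. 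Hence $\int\cv\rho v_+^2$ is nonincreasing and vanishes initially, so $v_+=0$. Therefore $1/\theta\leq 1/\underline{\theta_0}+KT$, and we may take $\underline{\theta}=(1/\underline{\theta_0}+KT)^{-1}$.

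The main obstacle is justifying this computation at the regularity available, since the coefficients $\kappa(c),\mu(c),\cv(c)$ are discontinuous. For that reason we use the energy-type comparison with $v_+$ rather than a pointwise maximum principle: it needs only the weak (divergence) form of the diffusion and the identity $D_t\cv(c)=0$, both valid for the strong solutions under consideration (Appendix \ref{app:justif}). We also need $\theta>0$ a priori so that $w$ is well defined. This holds for the smooth local solution, and the positivity propagates by continuity together with the resulting bound.
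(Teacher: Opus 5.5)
Your proof is correct and gives exactly the paper's constant, $\underline{\theta}=\bigl(1/\underline{\theta_0}+TR_{\max}^2\overline{\rho}/(4\mu_{\min}\cv_{\min})\bigr)^{-1}$. It shares the paper's key ingredients but implements the maximum principle differently.

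The shared core has three parts:
\begin{itemize}
\item completing the square, which in the paper's variables reads $\sigma\partial_x u\geq -R^2\rho^2\theta^2/(4\mu)$ and in yours $-\mu w^2(\partial_x u)^2+R\rho w\,\partial_x u\leq R^2\rho^2/(4\mu)$;
\item the upper bound $\rho\leq\overline{\rho}$;
\item discarding the conduction contribution because of its sign.
\end{itemize}

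The paper never writes an equation for $1/\theta$. It multiplies the conservative temperature equation by $\beta'(\theta)$ with $\beta(x)=x^{-k}$, and drops $-\int_0^1\kappa(\partial_x\theta)^2\beta''(\theta)\leq 0$ after an integration by parts. H\"older's inequality and mass conservation then give $\frac{d}{dt}\bigl(\int_0^1\rho\cv\theta^{-k}\bigr)^{1/k}\leq C$ uniformly in $k$, and letting $k\to\infty$ reaches $\Vert 1/\theta\Vert_\infty$. That template has the merit of being reused almost verbatim, with $\beta(x)=x^k$, for the later upper bound on $\theta$. It does, however, require a limiting argument in $k$, namely convergence of the weighted $L^k$ norms to the $L^\infty$ norm, including at $t=0$.

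Your route first derives the pointwise inequality for $w=1/\theta$. This costs an extra computation, which produces the harmless term $-2\kappa(\partial_x\theta)^2/\theta^3$. You then close with a single Stampacchia truncation against the spatially constant supersolution $W(t)$. This avoids any limit in $k$. Because $\partial_x(\kappa\partial_x W)=0$ and $D_t\cv(c)=0$, the comparison is insensitive to the oscillating coefficients. Your remark about propagating $\theta>0$ by a continuity argument addresses a point the paper also needs, since it works with $\theta^{-k}$, but leaves implicit.
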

\begin{proof}
Let $\beta:\R\rightarrow\R$ be some convex and non increasing function. Multiplying \eqref{eq:thetameso} by $\beta'(\theta)$ we get using \eqref{eq:cmeso}
\begin{align}\label{eq:42}
\partial_t(\rho\cv\beta(\theta)) + \partial_x(\rho u\cv\beta(\theta)) &= (\sigma\partial_x u)\beta'(\theta) + (\partial_x(\kappa\partial_x\theta))\beta'(\theta).
\end{align}
But from \eqref{eq:defsigma},
\begin{equation}\label{eq:43}
\sigma\partial_x u = \frac{\sigma^2}{\mu} + \frac{\sigma R\rho\theta}{\mu} = \frac{(\sigma + R\rho\theta/2)^2}{\mu} - \frac{R^2\rho^2\theta^2}{4\mu}\geq \frac{-R^2\rho^2\theta^2}{4\mu}
\end{equation}
and $\beta'(\theta)\leq 0$, therefore \eqref{eq:42} and \eqref{eq:43} give
\begin{equation}\label{eq:44}
\partial_t(\rho\cv\beta(\theta)) + \partial_x(\rho u\cv\beta(\theta)) \leq -\frac{R^2\rho^2\theta^2\beta'(\theta)}{4\mu} + (\partial_x(\kappa\partial_x\theta))\beta'(\theta).
\end{equation} 
Integrating \eqref{eq:44} over the torus we get by integration by parts
\begin{equation*}
\frac{d}{dt}\int_0^1 \rho\cv\beta(\theta) \leq -\int_0^1 \frac{R^2\rho^2\theta^2\beta'(\theta)}{4\mu} - \int_0^1 \kappa(\partial_x\theta)^2\beta''(\theta) \leq -\int_0^1\frac{R^2\rho^2\theta^2\beta'(\theta)}{4\mu}
\end{equation*}
because $\beta''(\theta)\geq 0$. Choosing now $\beta :x\mapsto (1/x)^k$ for $k\in\N^*$, we obtain
\begin{equation}\label{eq:45}
\frac{d}{dt}\int_0^1\frac{\rho\cv}{\theta^k}\leq \frac{R_{\max}^2\overline{\rho}k}{4\mu_{\min}}\int_0^1\frac{\rho}{\theta^{k-1}} \leq \frac{R_{\max}^2\overline{\rho} k}{4\mu_{\min}\cv_{\min}^{1-1/k}}{\cal M}^{1/k}\left(\int_0^1\frac{\rho\cv}{\theta^k}\right)^{1-1/k}
\end{equation}
using Hölder's inequality then \eqref{eq:26}. As a consequence, multiplying \eqref{eq:45} by 
\begin{equation*}
\frac{1}{k}\left(\int_0^1 \frac{\rho\cv}{\theta^k}\right)^{1/k-1},
\end{equation*}
we get
\begin{equation}\label{eq:46}
\frac{d}{dt}\left(\int_0^1\frac{\rho\cv}{\theta^k}\right)^{1/k} \leq \frac{R_{\max}^2\overline{\rho}}{4\mu_{\min}\cv_{\min}^{1-1/k}}{\cal M}^{1/k}.
\end{equation}
Moreover, as $\rho>0$ and $\rho$ is regular, we have
\begin{equation*}
\left(\int_0^1 \frac{\rho\cv}{\theta^k}\right)^{1/k}\underset{k\rightarrow +\infty}{\rightarrow}\left\Vert \frac{1}{\theta} \right\Vert_\infty \quad\text{in }{\cal D}'(0,T),
\end{equation*}
hence, from \eqref{eq:46}
\begin{equation}\label{eq:47}
\frac{d}{dt}\left\Vert \frac{1}{\theta}\right\Vert_{\infty} \leq \frac{R_{\max}^2\overline{\rho}}{4\mu_{\min}\cv_{\min}}.
\end{equation}
Finally, integrating \eqref{eq:47} on $[0,t]$ for $t\in [0,T]$, we get
\begin{equation*}
\text{for almost all }(t,x)\in [0,T]\times \T,\quad\theta(t,x) \geq \frac{1}{1/\underline{\theta_0} + T R_{\max}^2\overline{\rho}/(4\mu_{\min}\cv_{\min})}.
\end{equation*}
\end{proof}

\subsection{Heuristic for obtaining stronger bounds}\label{sec:heur}

The key to continuing the analysis is to bound the Cauchy stress $\sigma$.
Let $\beta$ be some convex function. Multiplying \eqref{eq:sigma} by $\beta'(\sigma)/\mu$, then integrating over the torus, we get
\begin{equation}
\frac{d}{dt}\int_0^1 \frac{\beta(\sigma)}{\mu} + \int_0^1 \frac{(\partial_x\sigma)^2\beta''(\sigma)}{\rho} = \int_0^1\frac{\beta(\sigma)-\gamma\sigma\beta'(\sigma)}{\mu}\partial_x u -\int_0^1 \frac{\gamma-1}{\mu}\partial_x(\kappa\partial_x\theta)\beta'(\sigma).
\end{equation}
Similarly, multiplying \eqref{eq:thetameso} by $\beta'(\theta)$ we obtain
\begin{equation}
\frac{d}{dt}\int_0^1 \rho\cv\beta(\theta) + \int_0^1 (\partial_x\theta)^2\beta''(\theta) = \int_0^1\beta'(\theta)\sigma\partial_x u.
\end{equation}
For concreteness, let us choose $\beta(s) = \vert s\vert^k/k$ for some $k\in\N^*$. Then these last equations can be rewritten as
\begin{equation}\label{eq:ka}
\frac{d}{dt}\int_0^1 \frac{\vert\sigma\vert^k}{k\mu} + (k-1)\int_0^1\frac{(\partial_x\sigma)^2\vert\sigma\vert^{k-2}}{\rho} = \int_0^1 \frac{1-k\gamma}{k\mu}\vert\sigma\vert^k\partial_x u - \int_0^1\frac{\gamma-1}{\mu}\partial_x(\kappa\partial_x\theta)\sigma\vert\sigma\vert^{k-2}.
\end{equation}
\begin{equation}\label{eq:ki}
\frac{d}{dt}\int_0^1 \frac{\rho\cv\theta^k}{k} + (k-1)\int_0^1 (\partial_x\theta)^2 \theta^{k-2} = \int_0^1 \theta^{k-1}\sigma\partial_x u. 
\end{equation}
The last integral of the right hand side of \eqref{eq:ka} seems particularly annoying. The natural idea for bounding it is to start with some integration by parts, which is impossible due to the presence of the oscillatory coefficient $(\gamma-1)/\mu$, which depends on $c$. In the following, we therefore try to bound $\partial_x(\kappa\partial_x\theta)$ in $L^2(0,T,L^2(\T))$.  

Starting from \eqref{eq:thetameso}, we get
\begin{equation}
\partial_t(\partial_x \theta) + \partial_x(u\partial_x \theta) = \partial_x \dot{\theta} = \partial_x\left(\frac{\sigma\partial_x u}{\rho\cv}\right) +\partial_x\left(\frac{\partial_x(\kappa\partial_x\theta)}{\rho\cv}\right)
\end{equation}
then multiplying by $\kappa\partial_x\theta$ and integrating by parts (using the fact that $\kappa$ is transported by $u$), we get, using Young's inequality,
\begin{align}
\frac{1}{2}\frac{d}{dt}\int_0^1 \kappa(\partial_x\theta)^2 &+ \int_0^1 \frac{[\partial_x(\kappa\partial_x\theta)]^2}{\rho\cv} = -\int_0^1\frac{\kappa(\partial_x\theta)^2}{2}\partial_x u -\int_0^1 \frac{\sigma\partial_x u}{\rho\cv}\partial_x(\kappa\partial_x\theta) 
\\&\leq -\int_0^1 \frac{\kappa(\partial_x\theta)^2}{2}\partial_x u + \frac{1}{2\underline{\rho}\cv_{\min}}\int_0^1 \sigma^2(\partial_xu)^2 + \frac{1}{2}\int_0^1 \frac{[\partial_x(\kappa\partial_x\theta)]^2}{\rho\cv}.
\end{align}
thus
\begin{align}\label{eq:est1}
\frac{d}{dt}\int_0^1 \kappa(\partial_x\theta)^2 + \int_0^1 \frac{[\partial_x(\kappa\partial_x\theta)]^2}{\rho\cv} &\leq  \frac{1}{\underline{\rho}\cv_{\min}}\int_0^1 \sigma^2(\partial_x u)^2 -\int_0^1 \kappa(\partial_x\theta)^2\partial_xu.
\end{align}
Let us look at the first integral of the right hand side. This term is of degree $4$ in $\sigma,\partial_x u$. By comparing with \eqref{eq:ka},\eqref{eq:ki}, $k=3$ seems a good choice in order to close the estimates. From \eqref{eq:ka} and \eqref{eq:ki} we get
\begin{align}\label{eq:est2}
\frac{1}{3}\frac{d}{dt}\int_0^1 \frac{\vert \sigma\vert^3}{\mu} + 2\int_0^1 \frac{(\partial_x \sigma)^2\vert\sigma\vert}{\rho} &= \int_0^1 \frac{1/3-\gamma}{\mu}\vert\sigma\vert^3\partial_x u - \int_0^1\frac{\gamma-1}{\mu}\partial_x(\kappa\partial_x\theta)\sigma\vert\sigma\vert,
\end{align}
\begin{align}\label{eq:est3}
\frac{1}{3}\frac{d}{dt}\int_0^1\rho \cv\theta^3 + 2\int_0^1\kappa (\partial_x\theta)^2\theta &= \int_0^1\sigma(\partial_x u)\theta^2.
\end{align}
Informally, the main difficulty remains controlling terms of degree 4 in $\sigma, \partial_x u,\theta$ using terms of degree 3 in $\sigma, \partial_x u, \theta$. Figure \ref{fig:schempreuve} represents a strategy for achieving this. Figure \ref{fig:schempreuve} has to be understood as follows: from any given node, the corresponding term can be bounded by a linear combination of the terms associated with the nodes reached by the outgoing arrows. The prefix $\eta$ indicates that the associated coefficient can be taken as small as desired. For example, arrows $(a)$ indicate that for all $\eta>0$, there exists some $C_\eta = C_\eta(\eta,\overline{\rho_0},\underline{\rho_0}, \overline{\theta_0},\underline{\theta_0},C_0)$ such that
\begin{equation}
\left\vert\int_0^1\sigma^3\partial_x u\right\vert,\quad \left\vert\int_0^1\vert\sigma\vert^3\partial_x u\right\vert\leq \eta\int_0^1\frac{(\partial_x\sigma)^2\vert\sigma\vert}{\rho} + C_\eta\int_0^1\frac{\vert\sigma\vert^3}{\mu}.
\end{equation}
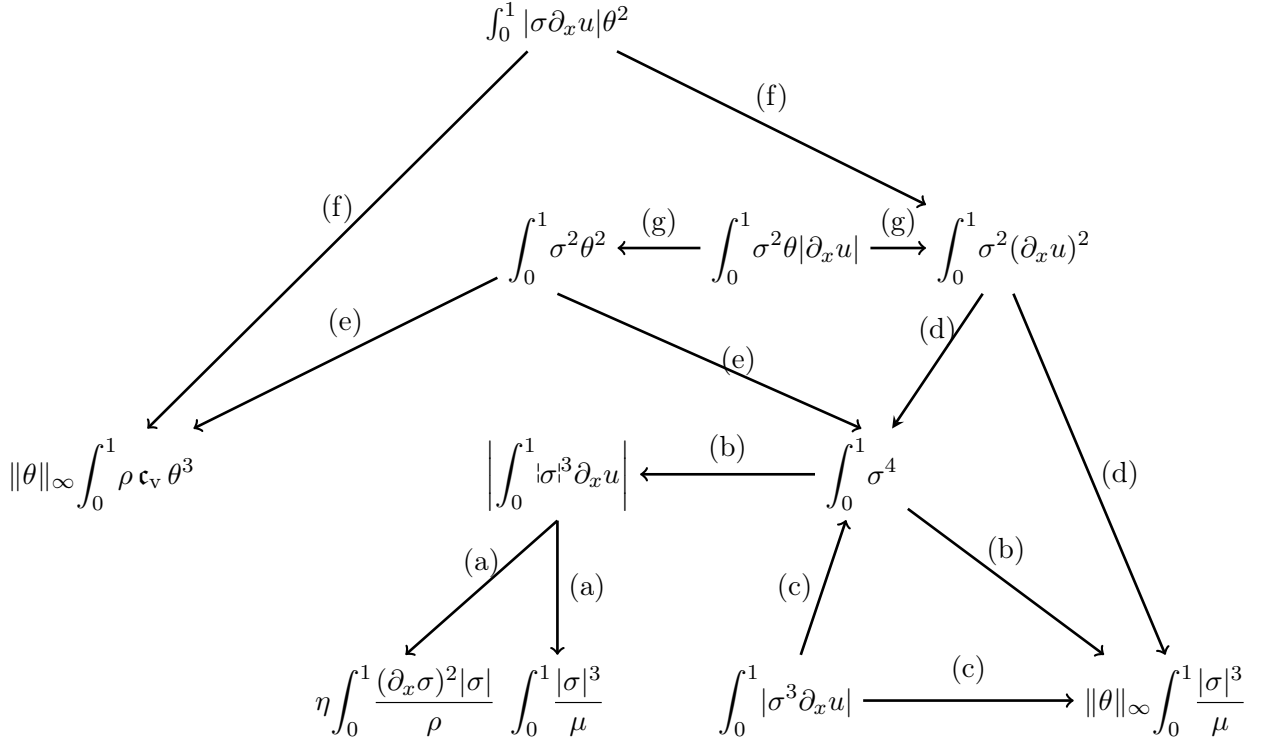
\begin{figure}[!ht]
\centering
\caption{Scheme of the proof}
\label{fig:schempreuve}
\begin{tikzpicture}
    \node (A) at (6,0) {$\displaystyle{\int_0^1}\sigma^2(\partial_x u)^2$};
    \node (K) at (3,0) {$\displaystyle{\int_0^1}\sigma^2\theta\vert\partial_x u\vert$};
    \node (B) at (0,0) {$\displaystyle{\int_0^1}\sigma^2\theta^2$};
    \node (C) at (4,-3) {$\displaystyle{\int_0^1}\sigma^4$};
    \node (E) at (8,-6) {$\Vert\theta\Vert_\infty\displaystyle{\int_0^1}\frac{\vert\sigma\vert^3}{\mu}$};
    \node (D) at (3,-6) {$\displaystyle{\int_0^1}\vert\sigma^3\partial_x u\vert$};
    \node (F) at (-6,-3) {$\Vert\theta\Vert_\infty\displaystyle{\int_0^1}\rho\cv\theta^3$};
    \node (G) at (0,-3) {$\left\vert\displaystyle{\int_0^1}\brokenvert\sigma\brokenvert^3\partial_x u\right\vert$};
    \node (H) at (-2,-6) {$\eta\displaystyle{\int_0^1}\dfrac{(\partial_x\sigma)^2\vert\sigma\vert}{\rho}$};
    \node (I) at (0,-6) {$\displaystyle{\int_0^1}\frac{\vert\sigma\vert^3}{\mu}$};
    \node (J) at (0,3) {$\int_0^1\vert\sigma\partial_x u\vert\theta^2$};
        \draw[->, line width=1pt]
        (K) -- (A)
        node[midway, above] {(g)};
        \draw[->, line width=1pt]
        (K) -- (B)
        node[midway, above] {(g)};

    \draw[->, line width=1pt] 
        (B.south) -- (C.north) 
        node[midway, right] {(e)};
       
    \draw[->, line width=1pt] 
        (A.south) -- (E.north) 
        node[midway, right] {(d)};
        
        \draw[->, line width=1pt] 
        (G.south) -- (H.north) 
        node[midway, above] {(a)};
        \draw[->, line width=1pt] 
        (G.south) -- (I.north) 
        node[midway, right] {(a)};
       
         \draw[->, line width=1pt] 
        (C.west) -- (G.east) 
        node[midway, above, sloped] {(b)};
    
        \draw[->, >=stealth, line width=1pt] 
        (A) -- (C) 
        node[midway, above, yshift=2pt] {(d)};
    \draw[->, line width=1pt] (D)--(C) node[midway, left] {(c)};
    \draw[->, line width=1pt] (D)--(E) node[midway, above, yshift = 2pt] {(c)};
     \draw[->, line width=1pt] (C)--(E) node[midway, above, yshift = 2pt] {(b)};
     \draw[->, line width=1pt] (B)--(F) node[midway, above, yshift = 2pt] {(e)};
          \draw[->, line width=1pt] (J)--(A) node[midway, above, yshift = 2pt] {(f)};
          \draw[->, line width=1pt] (J)--(F) node[midway, above, yshift = 2pt] {(f)};

\end{tikzpicture}
\end{figure}
\subsection{Closure of the estimates \texorpdfstring{\eqref{eq:est1}--\eqref{eq:est3}}{} }
In order to prove (a), we need the
\begin{lemma}\label{GN} For all $\eta>0$, there exists some $C_\eta= C_\eta(\eta, \overline{\rho},{\cal E})>0$ such that, for all $f\in H^1(\T)$,
\begin{equation}
\int_0^1 \vert u f \partial_x f\vert \leq \eta\int_0^1 \frac{(\partial_x f)^2}{\rho} + C_\eta\int_0^1 f^2.
\end{equation}
\end{lemma}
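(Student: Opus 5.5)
We need $\int |u f f_x| \le \eta\int f_x^2/\rho + C_\eta \int f^2$, where $C_\eta$ depends only on $\overline{\rho}$ and $\cal E$. The available information on $u$ is only the energy bound $\int_0^1\rho u^2\le 2\cal E$ from \eqref{eq:29}, so $u$ must always be paired with $\sqrt\rho$. The plan is to split $|u f\partial_x f| = (\sqrt{\rho}\,|u|\,|f|)\,(|\partial_x f|/\sqrt{\rho})$ and apply Young's inequality:
\begin{equation*}
\int_0^1 \vert u f \partial_x f\vert \leq \eta\int_0^1\frac{(\partial_x f)^2}{\rho} + \frac{1}{4\eta}\int_0^1 \rho u^2 f^2 \leq \eta\int_0^1\frac{(\partial_x f)^2}{\rho} + \frac{1}{4\eta}\Vert f\Vert_\infty^2\int_0^1 \rho u^2 .
\end{equation*}
By \eqref{eq:29} the last factor is at most $2\cal E$. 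The problem therefore reduces to controlling $\Vert f\Vert_\infty^2$.

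For this I use the one-dimensional Gagliardo--Nirenberg (Agmon) inequality on the torus:
\begin{equation*}
\Vert f\Vert_\infty^2\leq \int_0^1 f^2 + 2\Vert f\Vert_2\Vert\partial_x f\Vert_2 .
\end{equation*}
It follows from $f(x)^2 = f(y)^2 + 2\int_y^x f\partial_z f$ by averaging in $y$ and applying Cauchy--Schwarz. Since $\rho\le\overline\rho$ by Proposition~\ref{prop:4}, we have $\Vert\partial_x f\Vert_2^2\le \overline{\rho}\int_0^1 (\partial_x f)^2/\rho$.

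Substituting these bounds, the coupling term is at most
\begin{equation*}
\frac{\cal E}{2\eta}\left(\int_0^1 f^2 + 2\sqrt{\overline\rho}\,\Vert f\Vert_2\left(\int_0^1\frac{(\partial_x f)^2}{\rho}\right)^{1/2}\right).
\end{equation*}
A second Young inequality absorbs the cross term:
\begin{equation*}
\frac{\cal E\sqrt{\overline\rho}}{\eta}\Vert f\Vert_2 \Vert \partial_x f/\sqrt\rho\Vert_2\le \eta\int_0^1\frac{(\partial_x f)^2}{\rho} + \frac{{\cal E}^2\overline\rho}{4\eta^3}\int_0^1 f^2 .
\end{equation*}
Collecting terms gives the inequality with $2\eta$ in place of $\eta$ and
\begin{equation*}
C = \frac{\cal E}{2\eta}+\frac{{\cal E}^2\overline\rho}{4\eta^3}.
\end{equation*}
Renaming $\eta$ yields the claim with $C_\eta$ depending only on $\eta,\overline\rho,\cal E$, as required.

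The one step needing care is choosing the Young splitting so that $u$ appears only through $\rho u^2$. This is forced: we have no $L^\infty$ bound on $u$ at this stage, only energy. The price is that the sup norm of $f$ appears, and the interpolation inequality converts it into the dissipative quantity with a small coefficient. The dependence on $\overline\rho$ enters only when comparing $\int(\partial_x f)^2$ with $\int(\partial_x f)^2/\rho$.
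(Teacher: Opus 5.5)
Your proof is correct and takes essentially the same route as the paper: you pair $u$ with $\sqrt{\rho}$ to use the energy bound \eqref{eq:29}, control $\Vert f\Vert_\infty$ by the one-dimensional Gagliardo--Nirenberg inequality \eqref{eq:GN}, pass to $\int_0^1(\partial_x f)^2/\rho$ via $\rho\le\overline{\rho}$, and close with Young's inequality, obtaining a constant of the same form ${\cal E}/\eta+\overline{\rho}{\cal E}^2/\eta^3$. The only difference is the order of the steps. You apply Young before Gagliardo--Nirenberg, so $\Vert f\Vert_\infty$ appears squared and only quadratic Young inequalities are needed; the paper instead uses Cauchy--Schwarz first, takes a square root in \eqref{eq:GN}, and then needs a Young inequality with exponents $4/3$ and $4$.
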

\begin{proof} Using the Hölder's inequality we obtain from \eqref{eq:29}
\begin{equation}\label{eq:plic}
\int_0^1 \vert u f \partial_x f\vert \leq \Vert f\Vert_\infty \int_0^1 \sqrt{\rho} \vert u \vert \frac{\vert \partial_x f\vert }{\sqrt{\rho}}\leq \Vert f\Vert_\infty \sqrt{2\cE}\left(\int_0^1\frac{(\partial_x f)^2}{\rho}\right)^{1/2}.
\end{equation}

Then using the Gagliardo--Nirenberg inequality
\begin{equation}\label{eq:GN}
\Vert f\Vert_\infty^2 \leq \Vert f\Vert_2^2 + 2\Vert f\Vert_2\Vert \partial_x f\Vert_2,
\end{equation}
we get
\begin{align}\label{eq:ploc}
\Vert f\Vert_\infty &\leq \Vert f\Vert_{2} + 2\Vert f\Vert_{2}^{1/2}\Vert \partial_x f\Vert_{2}^{1/2}
\\&\leq \Vert f\Vert_{2} + 2\overline{\rho}^{1/4} \Vert f\Vert_{2}^{1/2} \Vert \partial_x f/\sqrt{\rho}\Vert_{2}^{1/2}.
\end{align}
Thus by Young's inequality, \eqref{eq:plic} and \eqref{eq:ploc} give
\begin{align}
\int_0^1 \vert u f\partial_x f\vert &\leq \sqrt{2\cE}\Vert f\Vert_{2} \Vert \partial_x f/\sqrt{\rho}\Vert_{2} + 2\overline{\rho}^{1/4}\sqrt{2\cE}\Vert f\Vert_{2}^{1/2}\Vert \partial_x f/\sqrt{\rho}\Vert_{2}^{3/2} 
\\&\leq \frac{\cE}{\eta}\int_0^1 f^2 + \frac{\eta}{2}\int_0^1 \frac{(\partial_xf)^2}{\rho} + 54\overline{\rho}\frac{\cE^2}{\eta^3}\int_0^1 f^2 + \frac{\eta}{2}\int_0^1 \frac{(\partial_x f)^2}{\rho}
\\&= C_{\eta} \int_0^1 f^2 + \eta \int_0^1 \frac{(\partial_x f)^2}{\rho}
\end{align}
where
\[C_{\eta}= \frac{\cE}{\eta} + 54\overline{\rho}\frac{\cE^2}{\eta^3}>0. \]
\end{proof}

The following proposition is the rigorous statement associated to Figure \ref{fig:schempreuve}.
\begin{prop}\label{prop:fond}
For all $\eta>0$, there exists some $D_\eta=D_\eta(C_0,\eta,\underline{\rho_0},\overline{\rho_0},\underline{\theta_0},\overline{\theta_0},T)>0$ such that
\begin{align}
\int_0^1 \sigma^4 + \int_0^1\sigma^2(\partial_x u)^2 &+ \int_0^1\sigma^2\theta^2 + \int_0^1\vert\sigma^3\partial_x u\vert + \int_0^1\vert\sigma\partial_x u\vert\theta^2 + \int_0^1\sigma^2\theta\vert\partial_x u\vert\\&\leq D_\eta\Vert\theta\Vert_\infty\int_0^1 \rho\cv\theta^3 + D_\eta(1+\Vert\theta\Vert_\infty)\int_0^1\frac{\vert\sigma\vert^3}{\mu} + \eta \int_0^1 \frac{(\partial_x\sigma)^2\vert\sigma\vert}{\rho}.
\end{align}
\end{prop}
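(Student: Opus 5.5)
Following the scheme of Figure \ref{fig:schempreuve}, I would bound each term on the left by the three quantities on the right, in the order (b),(a) for $\int_0^1\sigma^4$, then (c),(d),(e),(f),(g). The basic tool is the pointwise identity $\partial_x u = (\sigma + R\rho\theta)/\mu$, which follows from \eqref{eq:defsigma}. Together with $\underline{\rho}\le\rho\le\overline{\rho}$ (Propositions \ref{prop:4}, \ref{prop:rholower}), it gives $|\partial_x u|\lesssim |\sigma| + \theta$. So every mixed term reduces to $\int_0^1\sigma^4$, $\int_0^1\sigma^2\theta^2$ and $\int_0^1\theta^4$-type quantities. Writing $\sigma^4 = \sigma\cdot\sigma^3$ and $\sigma\partial_x u = \sigma^2/\mu + R\rho\theta\sigma/\mu$, I expect
\begin{equation*}
\int_0^1\sigma^4 \lesssim \int_0^1|\sigma|^3|\partial_x u| + \Vert\theta\Vert_\infty\int_0^1\frac{|\sigma|^3}{\mu},
\end{equation*}
which is arrow (b). Arrow (a) is Lemma \ref{GN}, used as follows.

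\textbf{Step (a).} This is the main obstacle: controlling $\left|\int_0^1|\sigma|^3\partial_x u\right|$ without integrating by parts against oscillating coefficients. I would use the momentum equation in the form $\partial_x\sigma = \rho\dot u$. This suggests writing $\sigma$ with zero-mean part $\sigma-\int_0^1\sigma$ and using Proposition \ref{prop:3}-type information. A cleaner route is to take $f = |\sigma|^{3/2}$ (up to sign), so that $f^2 = |\sigma|^3$ and
\begin{equation*}
(\partial_x f)^2/\rho \simeq (\partial_x\sigma)^2|\sigma|/\rho .
\end{equation*}
Then:
\begin{itemize}
\item I would bound $\int_0^1|\sigma|^4$ by $\Vert f\Vert_\infty^{2/3}\int_0^1 f^2$.
\item Gagliardo--Nirenberg \eqref{eq:GN}, combined with $\int_0^1|\sigma|\lesssim 1$ (from $\int_0^1|\mu\partial_x u| + \int_0^1 p$, using Proposition \ref{prop:2} and \eqref{eq:28}), gives interpolation with a small factor $\eta$ on $\int_0^1(\partial_x f)^2/\rho$ and $C_\eta$ on $\int_0^1 f^2 = \int_0^1|\sigma|^3$.
\item Concretely, $\Vert f\Vert_\infty \lesssim \int_0^1 f^{2/3} + \int_0^1|\partial_x f|$, and $\int_0^1|\partial_x f|\le (\int_0^1(\partial_x f)^2/\rho)^{1/2}(\int_0^1\rho|\sigma|)^{1/2}$.
\end{itemize}
The $L^1$ bound on $\sigma$ may only be time-integrated: Proposition \ref{prop:2} gives $\int_0^T\int_0^1(\partial_x u)^2/\theta$. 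If so, I would instead use the $L^\infty$ bound on $D_t^{-1}\sigma$ or the energy bounds via Lemma \ref{GN} with $f=\sigma|\sigma|^{1/2}$. The term $u f\partial_x f$ arises by rewriting $\int_0^1|\sigma|^3\partial_x u$ through $\partial_x(|\sigma|^3)u$ after moving the $\mu$-weight onto the transported coefficient, using $D_t\mu=0$. Lemma \ref{GN} then gives exactly (a).

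\textbf{Remaining arrows.}
\begin{itemize}
\item (c), (d): apply Young with $|\partial_x u|\lesssim|\sigma|+\theta$, giving $\int_0^1|\sigma|^3|\partial_x u|\lesssim\int_0^1\sigma^4+\Vert\theta\Vert_\infty\int_0^1|\sigma|^3/\mu$, and similarly $\int_0^1\sigma^2(\partial_x u)^2\lesssim\int_0^1\sigma^4+\int_0^1\sigma^2\theta^2$.
\item (e): $\int_0^1\sigma^2\theta^2\le \int_0^1\sigma^4/2+\int_0^1\theta^4/2$, and $\int_0^1\theta^4\le \underline{\rho}^{-1}\cv_{\min}^{-1}\Vert\theta\Vert_\infty\int_0^1\rho\cv\theta^3$.
\item (f), (g): $\int_0^1|\sigma\partial_x u|\theta^2$ and $\int_0^1\sigma^2\theta|\partial_x u|$ are handled by Young into $\int_0^1\sigma^2(\partial_x u)^2$ and $\int_0^1\theta^4$ (resp. $\int_0^1\sigma^2\theta^2$).
\end{itemize}
Summing all the arrows and absorbing the $\int_0^1\sigma^4$ terms with small constants closes the estimate. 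Throughout, the constants depend only on the bounds of Section 3.1.
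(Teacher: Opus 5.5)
Your arrows (c)--(g) match the paper's. Your eventual fallback for (a) is also exactly the paper's argument: integrate by parts, then apply Lemma \ref{GN} with $f=\sigma\vert\sigma\vert^{1/2}$ or $\vert\sigma\vert^{3/2}$. The gap is in (b) and in how it connects to (a).

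As you state it, (b) bounds $\int_0^1\sigma^4$ by $\int_0^1\vert\sigma\vert^3\vert\partial_x u\vert$, with the absolute value \emph{inside} the integral. That quantity cannot be integrated by parts, so (a) does not apply to it. The only bound you have for it is your own arrow (c), which gives back $\frac{1}{\mu_{\min}}\int_0^1\sigma^4$. Chaining (b) and (c) yields $\int_0^1\sigma^4\le\frac{\mu_{\max}}{\mu_{\min}}\int_0^1\sigma^4+\dots$ with $\mu_{\max}/\mu_{\min}\ge 1$. So your closing step, ``absorbing the $\int_0^1\sigma^4$ terms with small constants'', cannot be carried out: neither the constant in (b) nor the one in (c) is small. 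In the paper the diagram is acyclic and nothing is absorbed inside this proposition; the only small parameter is the $\eta$ produced by (a).

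The missing idea is to make (b) produce the \emph{signed, unweighted} integral $\left\vert\int_0^1\sigma^3\partial_x u\right\vert$. Use $\sigma^4\ge 0$ to pull $\mu$ out first, and only then insert $\sigma/\mu=\partial_x u-R\rho\theta/\mu$:
\begin{equation*}
\int_0^1\sigma^4\le\mu_{\max}\int_0^1\frac{\sigma^4}{\mu}\le\mu_{\max}\left\vert\int_0^1\sigma^3\partial_x u\right\vert+\mu_{\max}R_{\max}\overline{\rho}\Vert\theta\Vert_\infty\int_0^1\frac{\vert\sigma\vert^3}{\mu}.
\end{equation*}
Then $\int_0^1\sigma^3\partial_x u=-3\int_0^1 u\sigma^2\partial_x\sigma$ contains no oscillating coefficient, and Lemma \ref{GN} finishes the job.

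If you instead split $\sigma^2=\mu\sigma\partial_x u-R\rho\theta\sigma$ as you suggest, you are left with $\int_0^1\mu\sigma^3\partial_x u$. Integrating that by parts creates $\int_0^1 u\sigma^3\partial_x\mu$, and $\partial_x\mu=(\mu_+-\mu_-)\partial_x c$ is not controlled (it concentrates on the interfaces). Using $D_t\mu=0$ only trades this term for $\partial_t\mu$, which is useless at fixed time. So the ``moving the $\mu$-weight'' step is neither valid nor needed.

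Finally, discard your first route for (a). Proposition \ref{prop:2} controls $\partial_x u$ only after time integration, so there is no uniform-in-time bound on $\int_0^1\vert\sigma\vert$. Even with such a bound, the interpolation $\int_0^1\sigma^4\le\Vert f\Vert_\infty^{2/3}\int_0^1 f^2$ leaves, after Young's inequality, the superlinear power $\left(\int_0^1\vert\sigma\vert^3\right)^{3/2}$. What makes the bound linear in $\int_0^1\vert\sigma\vert^3/\mu$ is the factor $u$ in Lemma \ref{GN}, together with $\int_0^1\rho u^2\le 2\mathcal{E}$.
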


\begin{proof}
It is sufficient to show that each of the arrows of Figure \ref{fig:schempreuve} is verified.
\begin{itemize}

\item[\fbox{(a)}] By integration by parts, we get
\begin{equation}
\int_0^1 \vert \sigma\vert^3\partial_x u = -3\int_0^1 u\sign(\sigma)\sigma^2\partial_x\sigma = -2\int_0^1 u\vert\sigma\vert^{3/2}\partial_x\vert\sigma\vert^{3/2},
\end{equation}
therefore the result applying Lemma \ref{GN} with $f=\vert\sigma\vert^{3/2}$. A similar proof holds replacing $\vert\sigma\vert^3$ by $\sigma^3$.
\item[\fbox{(b)}] From \eqref{eq:defsigma}, we get the inequality
\begin{align}
\int_0^1\sigma^4 &\leq \mu_{\max}\int_0^1\frac{\sigma^4}{\mu} = \mu_{\max}\int_0^1 \sigma^3\partial_x u - \mu_{\max}\int_0^1\sigma^3 \frac{R\rho\theta}{\mu}
\\&\leq \mu_{\max}\left\vert\int_0^1\sigma^3\partial_x u\right\vert + R_{\max}\overline{\rho}\mu_{\max}\Vert\theta\Vert_\infty\int_0^1\frac{\vert\sigma\vert^3}{\mu}.
\end{align}
\item[\fbox{(c)}] From \eqref{eq:defsigma}, we get
\begin{align}
\int_0^1 \vert\sigma^3\partial_x u\vert&\leq \int_0^1 \frac{\sigma^4}{\mu} + \int_0^1\frac{\sigma^3R\rho\theta}{\mu}
\\&\leq \frac{1}{\mu_{\min}}\int_0^1\sigma^4 + R_{\max}\overline{\rho}\Vert\theta\Vert_\infty \int_0^1 \frac{\vert\sigma\vert^3}{\mu}. 
\end{align}
\item[\fbox{(d)}] Using again \eqref{eq:defsigma}, we have
\begin{equation}
(\partial_x u)^2 = \frac{\sigma^2 + 2\sigma p + p^2}{\mu^2}\leq \frac{1}{\mu_{\min}^2}\sigma^2 + \frac{2R_{\max}\overline{\rho}}{\mu_{\min}}\Vert\theta\Vert_\infty\frac{\vert\sigma\vert}{\mu} + \frac{R_{\max}^2\overline{\rho}^2}{\mu_{\min}^2}\theta^2,
\end{equation}
hence
\begin{equation}
\int_0^1\sigma^2(\partial_x u)^2\leq \frac{1}{\mu_{\min}^2}\int_0^1\sigma^4 + \frac{2R_{\max}\overline{\rho}}{\mu_{\min}^2}\Vert\theta\Vert_\infty\int_0^1\frac{\vert\sigma\vert^3}{\mu} + \frac{R_{\max}^2\overline{\rho}^2}{\mu_{\min}^2}\int_0^1\sigma^2\theta^2.
\end{equation}
\item[\fbox{(e)}] By Young's inequality,
\begin{align}
\int_0^1\sigma^2\theta^2&\leq \frac{1}{2}\int_0^1\sigma^4 + \frac{1}{2}\int_0^1\theta^4
\\&\leq\frac{1}{2}\int_0^1\sigma^4 + \frac{1}{2\underline{\rho}\cv_{\min}}\Vert\theta\Vert_\infty\int_0^1\rho\cv\theta^3.
\end{align}
\item[\fbox{(f)}] By Young's inequality,
\begin{align}
\int_0^1\vert\sigma\partial_x u\vert\theta^2&\leq \frac{1}{2}\int_0^1\sigma^2(\partial_x u)^2 + \frac{1}{2}\int_0^1\theta^4
\\&\leq \frac{1}{2}\int_0^1\sigma^2(\partial_x u)^2 + \frac{1}{2\underline{\rho}\cv_{\min}}\Vert\theta\Vert_\infty\int_0^1\rho\cv\theta^3.
\end{align}
\item[\fbox{(g)}] Finally, by Young's inequality again,
\begin{equation}
\int_0^1\sigma^2\theta\vert\partial_x u\vert\leq \frac{1}{2}\int_0^1\sigma^2\theta^2 + \frac{1}{2}\int_0^1 \sigma^2(\partial_x u)^2.
\end{equation}
\end{itemize}
\end{proof}
Proposition \ref{prop:fond} is the main tool in order to close \eqref{eq:est1}--\eqref{eq:est3}.
\begin{prop}
For all $\eta>0$, there exists some $G_{1,\eta}= G_{1,\eta}(D_\eta,\eta, \overline{\rho},T)>0$ such that
\begin{equation}\label{eq:etde1}
\begin{split}
&\frac{d}{dt}\int_0^1 \frac{\vert \sigma\vert^3}{\mu} + \int_0^1 \frac{(\partial_x \sigma)^2\vert\sigma\vert}{\rho}\leq G_{1,\eta}\Vert\theta\Vert_\infty\int_0^1\rho\cv\theta^3 + G_{1,\eta}(1 + \Vert\theta\Vert_\infty)\int_0^1 \frac{\vert\sigma\vert^3}{\mu} + \eta\int_0^1\frac{\vert\partial_x(\kappa\partial_x\theta)\vert^2}{\rho\cv}.
\end{split}
\end{equation}
\end{prop}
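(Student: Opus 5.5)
Multiply the identity \eqref{eq:est2} by $3$. The left-hand side becomes $\frac{d}{dt}\int_0^1 |\sigma|^3/\mu + 6\int_0^1 (\partial_x\sigma)^2|\sigma|/\rho$. We then bound the two right-hand terms separately, absorbing part of the dissipation $6\int_0^1(\partial_x\sigma)^2|\sigma|/\rho$. At the end we keep at least one copy of it on the left.

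\emph{First term.} The first right-hand term is $3\int_0^1 \frac{1/3-\gamma}{\mu}|\sigma|^3\partial_x u$. The coefficient is oscillatory, so arrow (a) (integration by parts) cannot be used directly. Instead we bound it in absolute value by
\[
3\left(\gamma_{\max}-\tfrac13\right)\mu_{\min}^{-1}\int_0^1|\sigma^3\partial_x u|.
\]
This quantity is one of the terms controlled by Proposition \ref{prop:fond}. Applying it with a small parameter $\eta'$ gives
\[
C\left(D_{\eta'}\Vert\theta\Vert_\infty\int_0^1\rho\cv\theta^3 + D_{\eta'}(1+\Vert\theta\Vert_\infty)\int_0^1|\sigma|^3/\mu\right) + C\eta'\int_0^1(\partial_x\sigma)^2|\sigma|/\rho,
\]
with $C$ depending only on $\gamma_{\max},\mu_{\min}$. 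Choosing $\eta'$ so that $C\eta'\le 2$, the last term is absorbed into the dissipation.

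\emph{Second term.} The second term is $3\int_0^1 \frac{\gamma-1}{\mu}\partial_x(\kappa\partial_x\theta)\sigma|\sigma|$. We apply Young's inequality, inserting the weight $\rho\cv$:
\[
\le \eta\int_0^1\frac{|\partial_x(\kappa\partial_x\theta)|^2}{\rho\cv} + \frac{9(\gamma_{\max}-1)^2\overline{\rho}\cv_{\max}}{4\eta\mu_{\min}^2}\int_0^1\sigma^4.
\]
Here we use the upper bound $\rho\le\overline{\rho}$ from Proposition \ref{prop:4}. The quartic term $\int_0^1\sigma^4$ is again covered by Proposition \ref{prop:fond}, now with parameter $\eta''$. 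We choose $\eta''$ so that $\frac{9(\gamma_{\max}-1)^2\overline{\rho}\cv_{\max}}{4\eta\mu_{\min}^2}\eta''\le 2$. This gives another absorbable dissipation contribution, plus $D_{\eta''}$-weighted terms of the two admissible types.

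\emph{Collecting.} After both absorptions the left side retains $2\int_0^1(\partial_x\sigma)^2|\sigma|/\rho$, which is at least the single copy required. Every remaining right-hand term has one of the forms $\Vert\theta\Vert_\infty\int_0^1\rho\cv\theta^3$, $(1+\Vert\theta\Vert_\infty)\int_0^1|\sigma|^3/\mu$, or $\eta\int_0^1|\partial_x(\kappa\partial_x\theta)|^2/(\rho\cv)$. We therefore set $G_{1,\eta}$ to be the sum of the resulting constants. These depend on $D_{\eta'}$, $D_{\eta''}$ (with $\eta''$ a function of $\eta$ and $\overline{\rho}$), $\eta$, $\overline{\rho}$ and the fixed physical constants, and through $\overline{\rho}$ on $T$.

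\emph{Main point of care.} The only delicate step is the order of choices. The parameter $\eta''$ passed to Proposition \ref{prop:fond} must be fixed \emph{after} $\eta$, because the Young constant $\sim 1/\eta$ multiplies the $\eta''$-dissipation. This makes $G_{1,\eta}$ blow up as $\eta\to0$, as the statement allows. The other check is that no integration by parts is ever performed against the oscillating coefficient $(\gamma-1)/\mu$. This holds because the first term is handled through $\int_0^1|\sigma^3\partial_x u|$ rather than through arrow (a) directly.
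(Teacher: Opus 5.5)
Your proposal is correct and follows the paper's proof. You bound the oscillatory-coefficient term by $\int_0^1|\sigma^3\partial_x u|$, apply Young's inequality with weight $\rho\cv$ to the $\partial_x(\kappa\partial_x\theta)$ term to produce $\int_0^1\sigma^4$, and close both with Proposition \ref{prop:fond}; your explicit bookkeeping (multiplying by $3$, and fixing the small parameter in Proposition \ref{prop:fond} only after $\eta$) simply makes precise what the paper leaves implicit.
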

\begin{proof}
Starting from \eqref{eq:est2} and by Young's inequality, we get
\begin{align}
\frac{1}{3}\frac{d}{dt}\int_0^1 \frac{\vert \sigma\vert^3}{\mu} &+ 2\int_0^1 \frac{(\partial_x \sigma)^2\vert\sigma\vert}{\rho} = \int_0^1 \frac{1/3-\gamma}{\mu}\vert\sigma\vert^3\partial_x u - \int_0^1\frac{\gamma-1}{\mu}\partial_x(\kappa\partial_x\theta)\sigma\vert\sigma\vert
\\&\leq \frac{\gamma_{\max}-1/3}{\mu_{\min}}\int_0^1 \vert\sigma\vert^3\vert\partial_x u\vert + \frac{\gamma_{\max}-1}{\mu_{\min}}\int_0^1 \vert\partial_x(\kappa\partial_x\theta)\vert \vert\sigma\vert^2
\\&\leq \frac{\gamma_{\max}-1/3}{\mu_{\min}}\int_0^1 \vert\sigma\vert^3\vert\partial_x u\vert + \frac{(\gamma_{\max}-1)^2\overline{\rho}\cv_{\max}}{4\eta\mu_{\min}^2}\int_0^1\sigma^4+ \eta\int_0^1\frac{\vert\partial_x(\kappa\partial_x\theta)\vert^2}{\rho\cv}.
\end{align}
Then we conclude thanks to Proposition \ref{prop:fond}.
\end{proof}

\begin{prop} For all $\eta>0$, there exists some $G_{2,\eta}=G_{2,\eta}(D_\eta,\eta,\overline{\rho},\underline{\rho},T)>0$ such that 
\begin{align}\label{eq:etde3}
\frac{d}{dt}\int_0^1\rho \cv\theta^3 + \int_0^1\kappa (\partial_x\theta)^2\theta &\leq G_{2,\eta}(1 + \Vert\theta\Vert_\infty)\int_0^1\frac{\vert\sigma\vert^3}{\mu} + G_{2,\eta}\Vert\theta\Vert_\infty\int_0^1\cv\rho\theta^3 + \eta \int_0^1\frac{(\partial_x\sigma)^2\vert\sigma\vert}{\rho}.
\end{align}
\end{prop}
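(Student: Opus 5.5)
Starting from \eqref{eq:est3}, the only term to control is the right-hand side $\int_0^1\sigma(\partial_x u)\theta^2$. We bound it crudely in absolute value, $\left\vert\int_0^1\sigma(\partial_x u)\theta^2\right\vert\leq \int_0^1\vert\sigma\partial_x u\vert\theta^2$. This is exactly the top node of Figure \ref{fig:schempreuve}, so Proposition \ref{prop:fond} applies directly. It gives
\begin{equation*}
\frac{1}{3}\frac{d}{dt}\int_0^1\rho\cv\theta^3 + 2\int_0^1\kappa(\partial_x\theta)^2\theta \leq D_\eta\Vert\theta\Vert_\infty\int_0^1\rho\cv\theta^3 + D_\eta(1+\Vert\theta\Vert_\infty)\int_0^1\frac{\vert\sigma\vert^3}{\mu} + \eta\int_0^1\frac{(\partial_x\sigma)^2\vert\sigma\vert}{\rho}.
\end{equation*}

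Multiplying by $3$ and dropping part of the nonnegative dissipation (keeping $\int_0^1\kappa(\partial_x\theta)^2\theta\leq 2\int_0^1\kappa(\partial_x\theta)^2\theta$, since $\theta>0$) yields \eqref{eq:etde3}. The constant is $G_{2,\eta}=3D_{\eta/3}$, after replacing $\eta$ by $\eta/3$ in Proposition \ref{prop:fond} so that the last term keeps coefficient $\eta$. The dependence of $G_{2,\eta}$ on $\overline{\rho},\underline{\rho},T$ is inherited from $D_\eta$, which uses the bounds of Propositions \ref{prop:4} and \ref{prop:rholower} through arrows (b)--(f).

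The only points to check are the following. First, identity \eqref{eq:est3} itself holds for the strong solution. It comes from multiplying \eqref{eq:thetameso} by $\theta^2$, using \eqref{eq:cmeso}--\eqref{eq:contmeso} to rewrite the transport part as $\frac{1}{3}\frac{d}{dt}\int\rho\cv\theta^3$ (since $\cv(c)$ is transported), and integrating $\partial_x(\kappa\partial_x\theta)\theta^2$ by parts on the torus, which is legitimate because $\kappa\partial_x\theta$ is continuous in space. Second, we use the positivity of $\theta$ from Proposition \ref{prop:5}. No step is a real obstacle: the substance of the proof was already carried out in arrows (f), (d), (e) and (b) of Proposition \ref{prop:fond}.
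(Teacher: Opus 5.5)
Your proposal is correct and follows essentially the paper's route. The paper applies Young's inequality to $\int_0^1\sigma(\partial_x u)\theta^2$, which is exactly arrow (f), and then invokes Proposition \ref{prop:fond} on $\int_0^1\sigma^2(\partial_x u)^2$; you instead apply Proposition \ref{prop:fond} directly to the node $\int_0^1\vert\sigma\partial_x u\vert\theta^2$, which already contains that step, and your bookkeeping of the constant ($G_{2,\eta}=3D_{\eta/3}$) is fine.
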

\begin{proof}
Multiplying \eqref{eq:thetameso} by $\theta^2$ then integrating by part and using some Young's inequality, we get
\begin{align}
\frac{1}{3}\frac{d}{dt}\int_0^1\rho \cv\theta^3 + 2\int_0^1\kappa (\partial_x\theta)^2\theta &= \int_0^1\sigma(\partial_x u)\theta^2
\\&\leq \frac{1}{\cv_{\min}\underline{\rho}}\int_0^1\sigma^2(\partial_x u)^2 + \frac{\Vert\theta\Vert_\infty}{4}\int_0^1\rho\cv\theta^3.
\end{align}
Hence the result, using Proposition \ref{prop:fond}.
\end{proof}

\begin{prop}\label{prop:17jcrois}
There exists some $G_{3,\eta} = G_{3,\eta}(\eta,\underline{\rho},\overline{\rho},{\cal E})>0$ such that
\begin{align}\label{eq:etde2}
\frac{d}{dt}\int_0^1 \kappa (\partial_x\theta)^2 + \int_0^1 \frac{[\partial_x(\kappa\partial_x\theta)]^2}{\rho\cv} &\leq G_{\eta,3}(1+\Vert\theta\Vert_\infty)\int_0^1\frac{\vert\sigma\vert^3}{\mu} + G_{\eta,3}\Vert\theta\Vert_\infty\int_0^1\cv\rho\theta^3 \\&+ \eta \int_0^1\frac{(\partial_x\sigma)^2\vert\sigma\vert}{\rho} - \int_0^1\kappa(\partial_x\theta)^2\partial_x u.
\end{align}
\end{prop}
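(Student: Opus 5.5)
The plan is to start from the heuristic inequality \eqref{eq:est1}, which already has the required structure: the dissipative term $\int_0^1 [\partial_x(\kappa\partial_x\theta)]^2/(\rho\cv)$ sits on the left, and the term $-\int_0^1\kappa(\partial_x\theta)^2\partial_x u$ is exactly the one kept on the right of \eqref{eq:etde2}. It therefore remains to bound $\frac{1}{\underline{\rho}\cv_{\min}}\int_0^1\sigma^2(\partial_x u)^2$ by the degree-three quantities on the right of \eqref{eq:etde2}. That is precisely what Proposition \ref{prop:fond} provides.

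First, I will derive \eqref{eq:est1} rigorously for the strong solution. I rewrite \eqref{eq:thetameso}, using \eqref{eq:contmeso} and \eqref{eq:cmeso}, as $\rho\cv D_t\theta = \sigma\partial_x u + \partial_x(\kappa\partial_x\theta)$. Next I differentiate in $x$ and test against $\kappa\partial_x\theta$. For the transport term, since $\kappa=\kappa(c)$ satisfies $D_t\kappa=0$, I will use $\kappa\partial_x\theta\,(\partial_t+u\partial_x)\partial_x\theta+\kappa\partial_x\theta\,\partial_xu\,\partial_x\theta = \frac12 D_t(\kappa(\partial_x\theta)^2)+\kappa(\partial_x\theta)^2\partial_xu$. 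After integrating and using $\int_0^1 D_t f = \frac{d}{dt}\int_0^1 f - \int_0^1 f\partial_x u$, this leaves the $\pm\frac12\int_0^1\kappa(\partial_x\theta)^2\partial_x u$ term. For the source terms, I integrate by parts once, moving $\partial_x$ onto $\kappa\partial_x\theta$. This is legitimate because $\kappa\partial_x\theta$ is a flux and hence more regular, and it involves no derivative of the oscillating coefficient alone. It produces $-\int_0^1 \frac{\sigma\partial_xu}{\rho\cv}\partial_x(\kappa\partial_x\theta) - \int_0^1\frac{[\partial_x(\kappa\partial_x\theta)]^2}{\rho\cv}$. Young's inequality on the cross term, with weight $\frac12$, then absorbs half of the dissipation. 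Multiplying by $2$ gives \eqref{eq:est1}, with constant $1/(\underline{\rho}\cv_{\min})$, where $\underline{\rho}$ comes from Proposition \ref{prop:rholower}.

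Second, I will apply Proposition \ref{prop:fond} with $\eta$ replaced by $\eta\underline{\rho}\cv_{\min}$. This gives
\[
\frac{1}{\underline{\rho}\cv_{\min}}\int_0^1\sigma^2(\partial_x u)^2\leq \frac{D_{\eta'}}{\underline{\rho}\cv_{\min}}\Big(\Vert\theta\Vert_\infty\int_0^1\rho\cv\theta^3+(1+\Vert\theta\Vert_\infty)\int_0^1\frac{\vert\sigma\vert^3}{\mu}\Big)+\eta\int_0^1\frac{(\partial_x\sigma)^2\vert\sigma\vert}{\rho},
\]
with $\eta'=\eta\underline{\rho}\cv_{\min}$. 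I will then set $G_{3,\eta}=D_{\eta'}/(\underline{\rho}\cv_{\min})$. This constant depends on $\eta,\underline{\rho},\overline{\rho},\cE$ through $D_{\eta'}$, and hence ultimately on the initial data and $T$, as in the statement.

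The only delicate step is the first one: making sure the integration by parts never places a derivative on the discontinuous coefficients $\rho\cv$ or $\kappa$ by themselves. The remedy is to keep $\kappa\partial_x\theta$ grouped throughout and to use the transport of $\kappa$ by $u$. Everything else is bookkeeping of constants.
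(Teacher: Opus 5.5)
Your proposal is correct and takes the same route as the paper, whose proof is the one line ``use \eqref{eq:est1} then Proposition \ref{prop:fond}''. Your derivation of \eqref{eq:est1} (transport of $\kappa$, one integration by parts onto $\kappa\partial_x\theta$, Young with weight $\frac12$) reproduces the paper's heuristic subsection. Your rescaling $\eta'=\eta\underline{\rho}\cv_{\min}$ in Proposition \ref{prop:fond}, with $G_{3,\eta}=D_{\eta'}/(\underline{\rho}\cv_{\min})$, just makes explicit the constant bookkeeping the paper leaves implicit.
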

\begin{proof}
The result follows immediately using \eqref{eq:est1} then Proposition \ref{prop:fond}.
\end{proof}

To close the estimates, we need to obtain some bounds on
\begin{equation}
\int_0^1\kappa(\partial_x\theta)^2\partial_x u.
\end{equation}
Before proving bounds in general terms, note that in the simple case $\kappa_+ = \kappa_-$, this can be obtained briefly.
\begin{prop}
Assume that $\kappa_+ = \kappa_-$. Then, for all $\eta>0$,
\begin{equation}
\left\vert\int_0^1\kappa(\partial_x\theta)^2\partial_x u\right\vert \leq 2 C_\eta \int_0^1 \kappa\vert\partial_x\theta\vert^2 + \frac{2\eta\cv_{\max}}{\kappa} \int_0^1 \frac{[\partial_x(\kappa\partial_x\theta)]^2}{\rho\cv}.
\end{equation}
\end{prop}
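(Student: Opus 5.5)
When $\kappa_+=\kappa_-=\kappa$ is constant, we can integrate by parts in the troublesome term, exactly as in arrow (a), and then apply Lemma \ref{GN} with $f=\partial_x\theta$. Since $(\partial_x\theta)^2\partial_x u$ has one derivative on $u$, we write
\begin{equation*}
\int_0^1\kappa(\partial_x\theta)^2\partial_x u = -\int_0^1 u\,\partial_x\big(\kappa(\partial_x\theta)^2\big) = -2\kappa\int_0^1 u\,\partial_x\theta\,\partial_x^2\theta .
\end{equation*}
This step uses that $\kappa$ is a genuine constant: no derivative falls on the oscillating coefficient, and there is no boundary term on the torus. Hence
\begin{equation*}
\left\vert\int_0^1\kappa(\partial_x\theta)^2\partial_x u\right\vert\leq 2\kappa\int_0^1\vert u\,\partial_x\theta\,\partial_x(\partial_x\theta)\vert .
\end{equation*}

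Next we apply Lemma \ref{GN} with $f=\partial_x\theta\in H^1(\T)$. Regularity is available since we work with the strong solution, for which $\partial_x^2\theta\in L^2$. This gives, for every $\eta>0$,
\begin{equation*}
2\kappa\int_0^1\vert u\,\partial_x\theta\,\partial_x^2\theta\vert\leq 2\kappa\eta\int_0^1\frac{(\partial_x^2\theta)^2}{\rho}+2\kappa C_\eta\int_0^1(\partial_x\theta)^2 .
\end{equation*}
The second term is exactly $2C_\eta\int_0^1\kappa\vert\partial_x\theta\vert^2$.

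For the first term, we use $\partial_x(\kappa\partial_x\theta)=\kappa\,\partial_x^2\theta$. Then
\begin{equation*}
\frac{(\partial_x^2\theta)^2}{\rho}=\frac{[\partial_x(\kappa\partial_x\theta)]^2}{\kappa^2\rho}=\frac{\cv}{\kappa^2}\,\frac{[\partial_x(\kappa\partial_x\theta)]^2}{\rho\cv}\leq\frac{\cv_{\max}}{\kappa^2}\,\frac{[\partial_x(\kappa\partial_x\theta)]^2}{\rho\cv}.
\end{equation*}
Multiplying by $2\kappa\eta$ yields the coefficient $2\eta\cv_{\max}/\kappa$ stated in the proposition.

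None of these steps is a real obstacle once $\kappa$ is constant. The only points to check are that Lemma \ref{GN} applies to $f=\partial_x\theta$, which holds because the constant $C_\eta$ there depends only on $\overline{\rho}$ and $\cal E$, both already controlled, and that the integration by parts is legitimate for the strong solution. The real difficulty is in the general case $\kappa_+\neq\kappa_-$. There the identity $\partial_x(\kappa(\partial_x\theta)^2)=2\kappa\partial_x\theta\,\partial_x^2\theta$ fails because $\kappa$ jumps. One would then have to work instead with the flux $\kappa\partial_x\theta$, writing $\kappa(\partial_x\theta)^2=(\kappa\partial_x\theta)^2/\kappa$, which brings the same obstruction from the oscillating factor $1/\kappa$.
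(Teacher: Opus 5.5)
Your proof is correct and is essentially the paper's own argument. It uses the same three steps. First, the integration by parts $-\int_0^1\kappa(\partial_x\theta)^2\partial_x u = 2\kappa\int_0^1 u\,\partial_x\theta\,\partial_x^2\theta$, which is legitimate because $\kappa$ is constant. Second, Lemma \ref{GN} applied to $f=\partial_x\theta$. Third, the conversion $\kappa^2(\partial_x^2\theta)^2=[\partial_x(\kappa\partial_x\theta)]^2$ together with $1/\rho\leq \cv_{\max}/(\rho\cv)$. This gives exactly the stated constants.

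You also handle the absolute value explicitly, whereas the paper only writes the one-sided bound. That one-sided bound suffices there because Lemma \ref{GN} already controls $\int_0^1\vert uf\partial_x f\vert$.
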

\begin{proof}
Let us observe that, in the case $\kappa_+ = \kappa_-$, we get by integration by parts
\begin{equation}
-\int_0^1 \kappa(\partial_x\theta)^2\partial_x u = 2\kappa\int_0^1 u (\partial_x\theta) \partial_{xx}\theta.
\end{equation}
Using now Lemma \ref{GN} we obtain, for all $\eta>0$,
\begin{align}
-\int_0^1\kappa(\partial_x\theta)^2\partial_x u &\leq 2 C_\eta \int_0^1 \kappa\vert\partial_x\theta\vert^2 + \frac{2\eta}{\kappa} \int_0^1 \frac{[\partial_x(\kappa\partial_x\theta)]^2}{\rho}
\\&\leq 2 C_\eta \int_0^1 \kappa\vert\partial_x\theta\vert^2 + \frac{2\eta\cv_{\max}}{\kappa} \int_0^1 \frac{[\partial_x(\kappa\partial_x\theta)]^2}{\rho\cv} .
\end{align}
\end{proof}
In the general case, we need more integrability informations:
\begin{prop}
For all $\eta>0$, there exists some $G_{4,\eta} = G_{4,\eta}(\eta,D_\eta, \overline{\rho},\underline{\rho})>0$ such that
\begin{equation}
\begin{split}
\frac{d}{dt}\int_0^1 \frac{\sigma^2\theta}{\mu} + \int_0^1 \frac{(\partial_x\sigma)^2\theta}{\rho} &\leq \eta\int_0^1\frac{(\partial_x\sigma)^2\vert\sigma\vert}{\rho} + \eta\int_0^1\frac{[\partial_x(\kappa\partial_x\theta)]^2}{\rho\cv} + G_{4,\eta}\Vert\theta\Vert_\infty\int_0^1\kappa(\partial_x\theta)^2 \\&+ G_{4,\eta}\Vert\theta\Vert_\infty \int_0^1\rho\cv\theta^3 + G_{4,\eta}(1+\Vert\theta\Vert_\infty)\int_0^1 \frac{\vert\sigma\vert^3}{\mu} + \eta\int_0^1 \frac{(\partial_x\sigma)^2\theta}{\rho}.
\end{split}\label{eq:etde4}
\end{equation}
\end{prop}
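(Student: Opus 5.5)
We multiply the Cauchy stress equation \eqref{eq:sigma} by $2\sigma\theta/\mu$ and integrate over the torus. Since $\mu$ is transported by $u$, we have $D_t(1/\mu)=0$, so $\sigma^2\theta/\mu$ obeys a transport identity with extra terms coming from $D_t\theta$. We write
\begin{equation*}
\frac{d}{dt}\int_0^1\frac{\sigma^2\theta}{\mu} = \int_0^1\frac{2\sigma\theta D_t\sigma + \sigma^2 D_t\theta}{\mu} + \int_0^1\frac{\sigma^2\theta}{\mu}\partial_x u.
\end{equation*}
We then substitute $D_t\sigma$ from \eqref{eq:sigma} and $D_t\theta=(\sigma\partial_x u+\partial_x(\kappa\partial_x\theta))/(\rho\cv)$ from \eqref{eq:thetameso}. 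We integrate by parts only the diffusion term $\mu\partial_x(\partial_x\sigma/\rho)$; the factor $\mu$ cancels against $1/\mu$, so this step is legitimate. It produces the good term $-2\int(\partial_x\sigma)^2\theta/\rho$ and a cross term $-2\int\sigma\partial_x\sigma\partial_x\theta/\rho$.

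The remaining terms are handled as follows.
\begin{itemize}
\item The terms $\int\sigma^2\theta\partial_x u/\mu$, $\int\gamma\sigma^2\theta\partial_x u/\mu$ and $\int\sigma^3\partial_x u/(\mu\rho\cv)$ are of degree four in $(\sigma,\theta,\partial_x u)$. They are bounded by Proposition \ref{prop:fond}, using arrows (g) and (c), and contribute $D_\eta$-type multiples of $\Vert\theta\Vert_\infty\int\rho\cv\theta^3$ and $(1+\Vert\theta\Vert_\infty)\int|\sigma|^3/\mu$, plus $\eta\int(\partial_x\sigma)^2|\sigma|/\rho$.
\item The two terms containing $\partial_x(\kappa\partial_x\theta)$ are $\int(\gamma-1)\sigma\theta\,\partial_x(\kappa\partial_x\theta)/\mu$ and $\int\sigma^2\partial_x(\kappa\partial_x\theta)/(\mu\rho\cv)$. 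We cannot integrate these by parts because of the oscillating coefficients. Young's inequality gives $\eta\int[\partial_x(\kappa\partial_x\theta)]^2/(\rho\cv)$ plus $C_\eta\int\sigma^2\theta^2$ and $C_\eta\int\sigma^4$, and both of the latter are controlled by Proposition \ref{prop:fond}.
\end{itemize}

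The main obstacle is the cross term $\int\sigma\partial_x\sigma\partial_x\theta/\rho$. We split it with Young's inequality as
\begin{equation*}
\eta\int_0^1\frac{(\partial_x\sigma)^2\theta}{\rho} + C_\eta\int_0^1\frac{\sigma^2(\partial_x\theta)^2}{\rho\theta}.
\end{equation*}
The first piece is absorbed by the final term allowed in \eqref{eq:etde4}. For the second we would use $\theta\ge\underline\theta$ and $\rho\ge\underline\rho$, so it is bounded by $C\Vert\sigma\Vert_\infty^2\int\kappa(\partial_x\theta)^2$.

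Next we need $\Vert\sigma\Vert_\infty^2\lesssim\Vert\theta\Vert_\infty$ up to absorbable terms. We estimate $\Vert\sigma\Vert_\infty^{2}$ via Gagliardo--Nirenberg applied to $f=|\sigma|^{3/2}$, as in the proof of Lemma \ref{GN}. This gives
\begin{equation*}
\Vert\sigma\Vert_\infty^3\lesssim\int_0^1|\sigma|^3+\Big(\int_0^1|\sigma|^3\Big)^{1/2}\Big(\int_0^1\frac{(\partial_x\sigma)^2|\sigma|}{\rho}\Big)^{1/2}.
\end{equation*}
The danger is that $\Vert\sigma\Vert_\infty^2\int\kappa(\partial_x\theta)^2$ mixes $(\partial_x\sigma)^2|\sigma|$ with $\int(\partial_x\theta)^2$, and a Young splitting would leave a power of $\int\kappa(\partial_x\theta)^2$ higher than one.

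So we first try a different splitting. We bound $|\sigma\partial_x\sigma\partial_x\theta|/\rho$ directly by
\begin{equation*}
\eta\frac{(\partial_x\sigma)^2|\sigma|}{\rho}+C_\eta\frac{|\sigma|(\partial_x\theta)^2}{\rho}.
\end{equation*}
Here $|\sigma|\le \mu\partial_x u$ plus $p$-terms, and $\int|\sigma|(\partial_x\theta)^2\le\Vert\sigma\Vert_\infty\int(\partial_x\theta)^2$. If that still fails, we bound $|\sigma|(\partial_x\theta)^2$ using $|\sigma|\le C(1+\sigma^2)$ to produce a term of the form $\Vert\theta\Vert_\infty\int\kappa(\partial_x\theta)^2$. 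If this is not directly possible, we fall back on the $\theta$-weight: we write $|\sigma|\le\theta+\sigma^2/\theta$ and use $\theta\le\Vert\theta\Vert_\infty$, which yields exactly the $G_{4,\eta}\Vert\theta\Vert_\infty\int\kappa(\partial_x\theta)^2$ term of \eqref{eq:etde4}. The leftover $\int\sigma^2(\partial_x\theta)^2/\theta$ would then be treated by Gagliardo--Nirenberg on $\partial_x\theta$ together with the dissipation $\eta\int[\partial_x(\kappa\partial_x\theta)]^2/(\rho\cv)$. This last balancing is the step we expect to be most delicate.
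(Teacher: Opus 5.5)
\textbf{Verdict: there is a genuine gap.} Everything up to the cross term is correct and matches the paper. This includes the identity for $\frac{d}{dt}\int_0^1\sigma^2\theta/\mu$, integrating by parts only the diffusion term, and treating the degree-four terms and the two $\partial_x(\kappa\partial_x\theta)$ terms by Young and Proposition~\ref{prop:fond}. Your second splitting, $\eta\int_0^1(\partial_x\sigma)^2\vert\sigma\vert/\rho+C_\eta\int_0^1\vert\sigma\vert(\partial_x\theta)^2/\rho$, is exactly the paper's first step.

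The gap is what follows: you never bound $\int_0^1\vert\sigma\vert\kappa(\partial_x\theta)^2$. The missing idea is one more integration by parts, moving the derivative off one factor $\partial_x\theta$ onto the flux:
\begin{equation*}
\int_0^1\vert\sigma\vert\kappa(\partial_x\theta)^2=-\int_0^1\vert\sigma\vert\theta\,\partial_x(\kappa\partial_x\theta)-\int_0^1\kappa\theta\,\partial_x\theta\,\partial_x\vert\sigma\vert .
\end{equation*}
This is legitimate despite the oscillating coefficients, because only $\vert\sigma\vert$ and $\kappa\partial_x\theta$ (whose derivative is in $L^2$) are differentiated, never $\kappa$ alone.
\begin{itemize}
\item The first term is bounded by Young: $C_\eta\int_0^1\sigma^2\theta^2+\eta\int_0^1[\partial_x(\kappa\partial_x\theta)]^2/(\rho\cv)$. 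The quantity $\int_0^1\sigma^2\theta^2$ is controlled by Proposition~\ref{prop:fond}.
\item For the second term, split $\sqrt{\theta}\vert\partial_x\sigma\vert/\sqrt{\rho}$ against $\sqrt{\theta\rho}\,\kappa\vert\partial_x\theta\vert$. This gives $\eta\int_0^1(\partial_x\sigma)^2\theta/\rho+C_\eta\Vert\theta\Vert_\infty\int_0^1\kappa(\partial_x\theta)^2$.
\end{itemize}
This is precisely where the $\Vert\theta\Vert_\infty\int_0^1\kappa(\partial_x\theta)^2$ and $\eta\int_0^1(\partial_x\sigma)^2\theta/\rho$ terms in \eqref{eq:etde4} come from. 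The $\theta$-weighted dissipation on the left is what allows this.

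\textbf{Why your alternatives do not close.} The Gr\"onwall argument of Proposition~\ref{prop:oncolletout} works globally in time only if every non-absorbed term is $(1+\Vert\theta\Vert_\infty)$ times the energy, since this coefficient is integrable by Proposition~\ref{prop:11}. Each of your fallbacks produces a coefficient not known to be time-integrable before the estimate is closed; bounds such as Propositions~\ref{prop:21} and \ref{prop:sigmainf} are consequences of it.
\begin{itemize}
\item The bound $\Vert\sigma\Vert_\infty\int_0^1\kappa(\partial_x\theta)^2$ has such a coefficient. Trading $\Vert\sigma\Vert_\infty$ via Gagliardo--Nirenberg yields superlinear terms like $\bigl(\int_0^1\vert\sigma\vert^3\bigr)^{1/5}\bigl(\int_0^1\kappa(\partial_x\theta)^2\bigr)^{6/5}$, as you yourself feared.
\item The step $\vert\sigma\vert\le\theta+\sigma^2/\theta$ makes things worse. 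The leftover $\int_0^1\sigma^2(\partial_x\theta)^2/\theta$ is quadratic in $\sigma$. Gagliardo--Nirenberg must be applied to $\kappa\partial_x\theta$, not $\partial_x\theta$, which jumps across interfaces. It then gives $\bigl(\int_0^1\sigma^2\bigr)^2\int_0^1\kappa(\partial_x\theta)^2$, again superlinear with a non-integrable coefficient.
\item The claim that $\vert\sigma\vert\le C(1+\sigma^2)$ produces a factor $\Vert\theta\Vert_\infty$ is unfounded.
\end{itemize}
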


\begin{proof}
From  and , we get
\begin{align}
\partial_t\left(\frac{\sigma^2\theta}{\mu}\right) &+ \partial_x\left(\frac{\sigma^2\theta}{\mu}u\right) = \left[\partial_t\left(\frac{\sigma^2}{\mu}\right) + \partial_x\left(\frac{\sigma^2}{\mu}u\right)\right]\theta + \frac{\sigma^2 D_t\theta}{\mu} = I_1 + I_2.
\end{align}
First, from \eqref{eq:sigma} and \eqref{eq:thetameso}  we get
\begin{equation}
I_1 = 2\sigma\theta\partial_x\left(\frac{\partial_x\sigma}{\rho}\right) + \frac{(1-2\gamma)}{\mu}\sigma^2\theta\partial_x u - \frac{2(\gamma-1)}{\mu}\sigma\theta\partial_x(\kappa\partial_x\theta),
\end{equation}
\begin{equation}
I_2 =  \frac{\sigma^3\partial_x u}{\mu\rho\cv} + \frac{\sigma^2\partial_x(\kappa\partial_x\theta)}{\mu\rho\cv}.
\end{equation}
Hence, integrating over the torus, we obtain by integration by parts and by Young's inequality
\begin{align}
\int_0^1 I_1 &=  - 2\int_0^1 \frac{(\partial_x\sigma)^2\theta}{\rho} -2\int_0^1 \sigma\partial_x\theta \frac{\partial_x\sigma}{\rho} + \int_0^1\frac{(1-2\gamma)}{\mu}\sigma^2\theta\partial_x u - \int_0^1 \frac{2(\gamma-1)}{\mu}\sigma\theta\partial_x(\kappa\partial_x\theta)
\\&\leq -2\int_0^1\frac{(\partial_x\sigma)^2\theta}{\rho} + \frac{1}{\eta\overline{\rho}\kappa_{\min}}\int_0^1\vert\sigma\vert\kappa(\partial_x\theta)^2 + \eta\int_0^1\frac{(\partial_x\sigma)^2\vert\sigma\vert}{\rho} + \frac{2\gamma_{\max}-1}{\mu_{\min}}\int_0^1\sigma^2\theta\vert\partial_x u\vert \\&+ \frac{2(\gamma_{\max}-1)^2\overline{\rho}\cv_{\max}}{\eta\mu_{\min}^2}\int_0^1\sigma^2\theta^2 +\frac{\eta}{2}\int_0^1 \frac{[\partial_x(\kappa\partial_x\theta)]^2}{\rho\cv},
\end{align}
and
\begin{align}
\int_0^1 I_2 \leq \frac{1}{\mu_{\min}\underline{\rho}\cv_{\min}}\int_0^1 \vert\sigma^3\partial_x u\vert + \frac{1}{2\mu_{\min}^2\underline{\rho}\cv_{\min}\eta}\int_0^1\sigma^4 + \frac{\eta}{2}\int_0^1\frac{[\partial_x(\kappa\partial_x\theta)]^2}{\rho\cv}.
\end{align}
Moreover, by integration by parts
\begin{align}
&\int_0^1\vert\sigma\vert\kappa(\partial_x\theta)^2 = - \int_0^1 \vert\sigma\vert\theta \partial_x(\kappa\partial_x\theta) - \int_0^1 \kappa(\partial_x \vert\sigma\vert)\theta\partial_x \theta\label{eq:lastipp}
\\&\leq \frac{\overline{\rho}\cv_{\max}}{2\eta^2}\int_0^1 \sigma^2\theta^2 + \frac{\eta^2}{2}\int_0^1 \frac{[\partial_x(\kappa\partial_x\theta)]^2}{\rho\cv}  + \frac{\kappa_{\max}\overline{\rho}^2}{4\eta^2\underline{\rho}\kappa_{\min}}\Vert\theta\Vert_\infty\int_0^1 \kappa(\partial_x\theta)^2 + \eta^2\underline{\rho}\kappa_{\min}\int_0^1 \frac{(\partial_x\sigma)^2\theta}{\rho}.
\end{align}
We then prove \eqref{eq:etde4} using Proposition \ref{prop:fond}.
\end{proof}
\begin{prop}
For all $\eta>0$, there exists some $G_{5,\eta} = G_{5,\eta}(\eta,D_\eta,\overline{\rho},\underline{\rho})>0$ such that
\begin{equation}
\begin{split}
\left\vert \int_0^1\kappa(\partial_x\theta)^2\partial_x u\right\vert &\leq G_{5,\eta}\Vert\theta\Vert_\infty\int_0^1\rho\cv\theta^3 + G_{5,\eta}(1+\Vert\theta\Vert_\infty)\int_0^1\frac{\vert\sigma\vert^3}{\mu}+ G_{5,\eta}\Vert\theta\Vert_\infty\int_0^1 \kappa(\partial_x\theta)^2\\&+\eta\int_0^1\frac{(\partial_x\sigma)^2\vert\sigma\vert}{\rho}+\eta \int_0^1 \frac{[\partial_x(\kappa\partial_x\theta)]^2}{\rho\cv}  + \eta \int_0^1 \frac{(\partial_x\sigma)^2\theta}{\rho}.
\end{split}\label{eq:chabada}
\end{equation}
\end{prop}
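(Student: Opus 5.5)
We bound $\int_0^1\kappa(\partial_x\theta)^2\partial_x u$ by splitting $\partial_x u$ with \eqref{eq:defsigma}:
\begin{equation*}
\partial_x u=\frac{\sigma}{\mu}+\frac{R\rho\theta}{\mu}.
\end{equation*}
The pressure part is immediate:
\begin{equation*}
\int_0^1\kappa(\partial_x\theta)^2\frac{R\rho\theta}{\mu}\leq \frac{R_{\max}\overline{\rho}}{\mu_{\min}}\Vert\theta\Vert_\infty\int_0^1\kappa(\partial_x\theta)^2 .
\end{equation*}
This goes into the $G_{5,\eta}\Vert\theta\Vert_\infty\int_0^1\kappa(\partial_x\theta)^2$ term. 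So everything reduces to controlling $\int_0^1 \vert\sigma\vert\kappa(\partial_x\theta)^2$, up to the factor $1/\mu_{\min}$.

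For this term we reuse the integration by parts \eqref{eq:lastipp}. It is legitimate despite the oscillating $\kappa$, because only $\kappa\partial_x\theta$ is differentiated. It gives
\begin{equation*}
\int_0^1\vert\sigma\vert\kappa(\partial_x\theta)^2 = -\int_0^1\vert\sigma\vert\theta\,\partial_x(\kappa\partial_x\theta)-\int_0^1\kappa\,\partial_x\vert\sigma\vert\,\theta\,\partial_x\theta .
\end{equation*}
We then apply Young's inequality to each piece.
\begin{itemize}
\item The first piece is at most $\frac{\overline{\rho}\cv_{\max}}{2\eta'}\int_0^1\sigma^2\theta^2+\frac{\eta'}{2}\int_0^1\frac{[\partial_x(\kappa\partial_x\theta)]^2}{\rho\cv}$.
\item For the second piece, write $\vert\kappa\partial_x\vert\sigma\vert\theta\partial_x\theta\vert=\bigl(\vert\partial_x\sigma\vert\sqrt{\theta/\rho}\bigr)\bigl(\kappa\sqrt{\rho\theta}\vert\partial_x\theta\vert\bigr)$. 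It is then bounded by $\eta'\int_0^1\frac{(\partial_x\sigma)^2\theta}{\rho}+\frac{\kappa_{\max}\overline{\rho}}{4\eta'}\Vert\theta\Vert_\infty\int_0^1\kappa(\partial_x\theta)^2$.
\end{itemize}

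Finally, $\int_0^1\sigma^2\theta^2$ is handled by Proposition \ref{prop:fond}. It is bounded by $D_{\eta''}\Vert\theta\Vert_\infty\int_0^1\rho\cv\theta^3+D_{\eta''}(1+\Vert\theta\Vert_\infty)\int_0^1\frac{\vert\sigma\vert^3}{\mu}+\eta''\int_0^1\frac{(\partial_x\sigma)^2\vert\sigma\vert}{\rho}$.

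The only delicate point is bookkeeping of the small parameters. We choose $\eta'$ so that $\eta'/(2\mu_{\min})$ and $\eta'/\mu_{\min}$ are at most $\eta$. Then we choose $\eta''$ so that $\frac{\overline{\rho}\cv_{\max}}{2\eta'\mu_{\min}}\eta''\leq\eta$. With these choices the three dissipative quantities appear with coefficient at most $\eta$, and all remaining coefficients are collected into $G_{5,\eta}$, which depends on $\eta,D_\eta,\overline{\rho},\underline{\rho}$ as stated.

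The main obstacle would have been differentiating the oscillatory coefficient. Applying the $\kappa_+=\kappa_-$ argument directly would produce $\partial_x\kappa$. The plan avoids this by never writing $\partial_x\kappa$: all derivatives fall on $\kappa\partial_x\theta$ or on $\sigma$. The price is the new dissipation $\int_0^1\frac{(\partial_x\sigma)^2\theta}{\rho}$, which is precisely why the previous proposition on $\frac{d}{dt}\int_0^1\sigma^2\theta/\mu$ was established.
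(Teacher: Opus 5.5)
Your proposal is correct and follows essentially the same route as the paper: you split $\partial_x u=\sigma/\mu+R\rho\theta/\mu$, apply the integration by parts \eqref{eq:lastipp} with Young's inequality to $\int_0^1\vert\sigma\vert\kappa(\partial_x\theta)^2$, and close $\int_0^1\sigma^2\theta^2$ with Proposition \ref{prop:fond}. Your separate small parameters $\eta',\eta''$ actually make the bookkeeping cleaner than the paper's: it reuses a single $\eta$, which would leave the non-small coefficient $\overline{\rho}\cv_{\max}/(2\mu_{\min})$ in front of $\int_0^1(\partial_x\sigma)^2\vert\sigma\vert/\rho$ unless Proposition \ref{prop:fond} is applied with a rescaled parameter, exactly as you do.
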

\begin{proof}
\begin{align}
\left\vert\int_0^1\kappa(\partial_x\theta)^2\partial_x u\right\vert &\leq \int_0^1 \kappa(\partial_x\theta)^2\vert\partial_x u\vert
\\&\leq \frac{1}{\mu_{\min}}\int_0^1\kappa(\partial_x\theta)^2\vert\sigma\vert + \frac{R\overline{\rho}\Vert\theta\Vert_\infty}{\mu_{\min}}\int_0^1\kappa(\partial_x\theta)^2.
\end{align}
As a consequence, using \eqref{eq:lastipp},
\begin{equation}
\begin{split}
\left\vert\int_0^1\kappa(\partial_x\theta)^2\partial_x u\right\vert &\leq \frac{\overline{\rho}\cv_{\max}}{2\eta\mu_{\min}}\int_0^1\sigma^2\theta^2 + \frac{\eta}{2\mu_{\min}}\int_0^1\frac{[\partial_x(\kappa\partial_x\theta)]^2}{\rho\cv} + \left(\frac{\kappa_{\max}\overline{\rho}}{4\eta\mu_{\min}\underline{\rho}\kappa_{\min}} + \frac{R}{\mu_{\min}}\right)\overline{\rho}\Vert\theta\Vert_\infty \int_0^1\kappa(\partial_x\theta)^2 \\&+ \frac{\eta\overline{\rho}\kappa_{\min}}{\mu_{\min}}\int_0^1 \frac{(\partial_x\sigma)^2\theta}{\rho}.
\end{split}
\end{equation}
We then obtain \eqref{eq:chabada} using Proposition \ref{prop:fond}.
\end{proof}
Finally, combining all the previous results, we get
\begin{prop}\label{prop:oncolletout}
There exists some $H_5 = H_5(\overline{\rho_0},\underline{\rho_0},\overline{\theta_0},\underline{\theta_0}, C_0)>0$ such that
\begin{equation}
\begin{split}
&\sup_{0\leq t\leq T}\int_0^1\vert\sigma\vert^3 + \sup_{0\leq t\leq T}\int_0^1\rho\theta^3 + \sup_{0\leq t\leq T}\int_0^1 (\partial_x\theta)^2 + \sup_{0\leq t\leq T}\sigma^2\theta \\&+ \int_0^T\int_0^1 (\partial_x\sigma)^2\vert\sigma\vert + \int_0^T\int_0^1(\partial_x\theta)^2\theta + \int_0^T\int_0^1 [\partial_x(\kappa\partial_x\theta)]^2 + \int_0^T\int_0^1(\partial_x\sigma)^2\theta \leq H_5.
\end{split}
\end{equation}
\end{prop}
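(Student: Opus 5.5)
The plan is to sum the four differential inequalities \eqref{eq:etde1}, \eqref{eq:etde3}, \eqref{eq:etde2} and \eqref{eq:etde4}, using \eqref{eq:chabada} to control the remaining term $-\int_0^1\kappa(\partial_x\theta)^2\partial_x u$ in \eqref{eq:etde2}, and then apply Grönwall's lemma with the weight $1+\Vert\theta\Vert_\infty$. Proposition \ref{prop:11} makes this weight integrable on $[0,T]$. Concretely, I would set
\begin{equation*}
Y(t) = \int_0^1\frac{\vert\sigma\vert^3}{\mu} + \int_0^1\rho\cv\theta^3 + \int_0^1\kappa(\partial_x\theta)^2 + \int_0^1\frac{\sigma^2\theta}{\mu},
\end{equation*}
and take as the dissipation
\begin{equation*}
Z(t) = \int_0^1\frac{(\partial_x\sigma)^2\vert\sigma\vert}{\rho} + \int_0^1\kappa(\partial_x\theta)^2\theta + \int_0^1\frac{[\partial_x(\kappa\partial_x\theta)]^2}{\rho\cv} + \int_0^1\frac{(\partial_x\sigma)^2\theta}{\rho}.
\end{equation*}
Adding the four inequalities, the right-hand side consists of $G_\eta(1+\Vert\theta\Vert_\infty)Y$, which comes from the terms $\int\vert\sigma\vert^3/\mu$, $\int\rho\cv\theta^3$ and $\int\kappa(\partial_x\theta)^2$, plus a sum of terms $\eta\times$(one of the dissipation integrals). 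There are finitely many of these, at most about five per dissipation term. Choosing $\eta$ small, say $\eta=1/10$ after normalising constants, absorbs them into the left-hand side. This gives
\begin{equation*}
Y'(t) + \tfrac12 Z(t) \leq G(1+\Vert\theta\Vert_\infty(t))\,Y(t),
\end{equation*}
with $G$ depending only on $D_\eta$, $\overline\rho$, $\underline\rho$ and $T$.

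One step needs care before summing. Inequality \eqref{eq:etde2} carries no $\eta$ in front of $\int\kappa(\partial_x\theta)^2\partial_x u$, but \eqref{eq:chabada} lets me bound it with arbitrarily small coefficients on every dissipation term. So I would first fix $\eta$ in \eqref{eq:chabada}, and only then fix the $\eta$'s in \eqref{eq:etde1}, \eqref{eq:etde3} and \eqref{eq:etde4}. The large constants $G_{i,\eta}$ multiply only $Y$-type quantities, so the order of choices creates no circularity.

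Grönwall's lemma together with \eqref{eq:69} then gives
\begin{equation*}
\sup_{[0,T]}Y + \tfrac12\int_0^T Z \leq Y(0)\exp\big(G(T+H_4)\big).
\end{equation*}
To bound $Y(0)$ I use \eqref{eq:fininitcond}:
\begin{itemize}
\item $\int_0^1\kappa(\partial_x\theta_0)^2\leq \kappa_{\max}C_0$, and $\int_0^1\rho_0\cv\theta_0^3$ is controlled by $\overline{\rho_0}$ and $\overline{\theta_0}$;
\item $\sigma_0=\mu(c_0)\partial_x u_0 - R\rho_0\theta_0$, so $\int_0^1\vert\sigma_0\vert^3\lesssim \int_0^1\vert\partial_x u_0\vert^3 + \overline{\rho_0}^3\overline{\theta_0}^3$, which is bounded by $C_0$ and the data;
\item $\int_0^1\sigma_0^2\theta_0/\mu$ is bounded by $\overline{\theta_0}$ times the $L^2$ norm of $\sigma_0$, itself controlled by the $L^3$ norm.
\end{itemize}

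Finally, I would remove the weights using the two-sided bounds on $\rho$ (Propositions \ref{prop:4} and \ref{prop:rholower}), the lower bound on $\theta$ (Proposition \ref{prop:5}), and $\mu,\cv,\kappa\in[f_{\min},f_{\max}]$. This yields every term in the statement, including $\int\int(\partial_x\theta)^2\theta$, $\int\int(\partial_x\sigma)^2\theta$ and $\sup\int\sigma^2\theta$. The constants $\overline\rho,\underline\rho,\underline\theta,H_4$ depend only on $\cE,\cM,H_1$ and the initial bounds, which are in turn controlled by $\overline{\rho_0},\underline{\rho_0},\overline{\theta_0},\underline{\theta_0},C_0$ through Proposition \ref{prop:1}.

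I expect the main obstacle to be the bookkeeping of the small parameters, so that every dissipative term appearing on a right-hand side is genuinely absorbed. In particular, the dissipation $\int(\partial_x\sigma)^2\theta/\rho$ is produced only by \eqref{eq:etde4}, while it appears with coefficient $\eta$ in \eqref{eq:etde4} itself and in \eqref{eq:chabada}. I would check that every dissipative term produced by the Young inequalities in those proofs, for example the $\eta^2$ terms in \eqref{eq:lastipp}, is indeed of this absorbable form.
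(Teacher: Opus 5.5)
Your proposal is correct and follows essentially the same route as the paper. You sum \eqref{eq:etde1}, \eqref{eq:etde3}, \eqref{eq:etde2} and \eqref{eq:etde4}, control $-\int_0^1\kappa(\partial_x\theta)^2\partial_x u$ by \eqref{eq:chabada}, absorb the $\eta$-weighted dissipation terms by taking $\eta$ small, bound the initial quantity through \eqref{eq:fininitcond}, and close with Gr\"onwall's lemma using the weight $1+\Vert\theta\Vert_\infty$, which is integrable by \eqref{eq:69}. The lower bound on $\theta$ is not needed at the end, since the only weights to remove are $\rho$, $\mu$, $\cv$ and $\kappa$.
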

\begin{proof}
Adding \eqref{eq:etde1}, \eqref{eq:etde3}, \eqref{eq:etde2}, \eqref{eq:etde4} and using \eqref{eq:chabada}, we get, denoting $G_\eta = \max_{i=1\cdots 5} G_{i,\eta}$,
\begin{equation}
\begin{split}
&\frac{d}{dt}\int_0^1 \frac{\vert\sigma\vert^3}{\mu} + \frac{d}{dt}\int_0^1\rho\cv\theta^3 + \frac{d}{dt}\int_0^1\kappa(\partial_x\theta)^2 + \frac{d}{dt}\int_0^1\frac{\sigma^2\theta}{\mu} \\&+ \int_0^1\frac{(\partial_x\sigma)^2\vert\sigma\vert}{\rho} + \int_0^1\kappa(\partial_x\theta)^2\theta + \int_0^1 \frac{[\partial_x(\kappa\partial_x\theta)]^2}{\rho\cv} + \int_0^1\frac{(\partial_x\sigma)^2\theta}{\rho} \\&\leq 5G_\eta(1+\Vert\theta\Vert_\infty)\int_0^1\frac{\vert\sigma\vert^3}{\mu} + 5G_\eta\Vert\theta\Vert_\infty\int_0^1\rho\cv\theta^3+ 2G_\eta\Vert\theta\Vert_\infty\int_0^1\kappa(\partial_x\theta)^2 \\&+ 3\eta\int_0^1 \frac{[\partial_x(\kappa\partial_x\theta)]^2}{\rho\cv} + 5\eta\int_0^1 \frac{(\partial_x \sigma)^2\vert\sigma\vert}{\rho} + 2\eta\int_0^1\frac{(\partial_x\sigma)^2\theta}{\rho}.
\end{split}
\end{equation}
Choosing $\eta \leq 1/10$, we get
\begin{equation}
\begin{split}
&\frac{d}{dt}\int_0^1 \frac{\vert\sigma\vert^3}{\mu} + \frac{d}{dt}\int_0^1\rho\cv\theta^3 + \frac{d}{dt}\int_0^1\kappa(\partial_x\theta)^2 + \frac{d}{dt}\int_0^1\frac{\sigma^2\theta}{\mu} \\&+ \frac{1}{2}\int_0^1\frac{(\partial_x\sigma)^2\vert\sigma\vert}{\rho} + \frac{1}{2}\int_0^1\kappa(\partial_x\theta)^2\theta + \frac{1}{2}\int_0^1 \frac{[\partial_x(\kappa\partial_x\theta)]^2}{\rho\cv} + \frac{1}{2}\int_0^1\frac{(\partial_x\sigma)\theta}{\rho} \\&\leq 5G_\eta(1+\Vert\theta\Vert_\infty)\left(\int_0^1\frac{\vert\sigma\vert^3}{\mu}+\int_0^1\rho\cv\theta^3 + \int_0^1\kappa(\partial_x\theta)^2\right).
\end{split}
\end{equation}
Hence, by the Grönwall's lemma,
\begin{equation}
\begin{split}
&\int_0^1 \frac{\vert\sigma\vert^3}{\mu} + \int_0^1\rho\cv\theta^3 + \int_0^1\kappa(\partial_x\theta)^2 + \int_0^1\frac{\sigma^2\theta}{\mu} \\&+ \frac{1}{2}\int_0^T\int_0^1\frac{(\partial_x\sigma)^2\vert\sigma\vert}{\rho} + \frac{1}{2}\int_0^T\int_0^1\kappa(\partial_x\theta)^2\theta + \frac{1}{2}\int_0^T\int_0^1 \frac{[\partial_x(\kappa\partial_x\theta)]^2}{\rho\cv} + \frac{1}{2}\int_0^T\int_0^1\frac{(\partial_x\sigma^2)\theta}{\rho}
\\&\leq \exp(5G_\eta(T+H_4))C_0'
\end{split}
\end{equation}
where
\begin{equation}
C_0' = 3{\mu_{\max}^2}C_0 + 3\frac{R_{\max}^3\overline{\rho_0}^3\overline{\theta_0}^3}{\mu_{\min}} + \cv_{\max}\overline{\rho_0}\overline{\theta_0}^3 + \kappa_{\max}C_0 + 2\frac{R_{\max}^2\overline{\rho_0}^2\overline{\theta_0}^3}{\mu_{\min}} + \frac{2}{3}\mu_{\max}C_0 + \frac{2}{3}\mu_{\max}\overline{\theta_0}^3.
\end{equation}
\end{proof}
\begin{prop}\label{prop:21}
There exists some $H_6 = H_6(\overline{\rho_0},\underline{\rho_0},\overline{\theta_0}, \underline{\theta_0}, C_0)>0$ such that
\begin{equation}
\sup_{0\leq t\leq T}\int_0^1\sigma^2 + \int_0^T\int_0^1 (\partial_x\sigma)^2\leq H_6.
\end{equation}
\end{prop}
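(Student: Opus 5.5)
The plan is to read Proposition \ref{prop:21} off directly from Proposition \ref{prop:oncolletout} and the pointwise lower bound on the temperature, with no new energy estimate. Both terms on the left-hand side are weaker versions of quantities already controlled by $H_5$. The only extra ingredient needed is the uniform lower bound $\theta\geq\underline{\theta}$ of Proposition \ref{prop:5}.

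For the first term, fix $t\in[0,T]$ and apply Hölder's inequality on the torus, which has measure $1$:
\begin{equation*}
\int_0^1\sigma^2(t)\leq\left(\int_0^1\vert\sigma\vert^3(t)\right)^{2/3}.
\end{equation*}
By Proposition \ref{prop:oncolletout}, the right-hand side is at most $H_5^{2/3}$ uniformly in $t$, so $\sup_{0\leq t\leq T}\int_0^1\sigma^2\leq H_5^{2/3}$.

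For the second term, Proposition \ref{prop:oncolletout} bounds $\int_0^T\int_0^1(\partial_x\sigma)^2\theta$ by $H_5$. Proposition \ref{prop:5} gives $\theta\geq\underline{\theta}>0$ almost everywhere on $[0,T]\times\T$. Hence
\begin{equation*}
\int_0^T\int_0^1(\partial_x\sigma)^2\leq\frac{1}{\underline{\theta}}\int_0^T\int_0^1(\partial_x\sigma)^2\theta\leq\frac{H_5}{\underline{\theta}}.
\end{equation*}
We may then take $H_6=H_5^{2/3}+H_5/\underline{\theta}$.

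The only point requiring care is the bookkeeping of dependencies: $H_6$ must depend only on $\overline{\rho_0},\underline{\rho_0},\overline{\theta_0},\underline{\theta_0},C_0$ (and $T$ and the physical constants). This holds because of two facts.
\begin{itemize}
\item $\underline{\theta}$ depends on ${\cal E},{\cal M},\overline{\rho},T,\underline{\theta_0}$.
\item $\overline{\rho}$, ${\cal E}$ and ${\cal M}$ are themselves controlled by the initial data through Propositions \ref{prop:1} and \ref{prop:4}, with ${\cal M}$ also bounded below by $\underline{\rho_0}$.
\end{itemize}
Tracing these dependencies gives the claim; no analytic obstacle is expected. Should one prefer to avoid using the weighted term $\int\int(\partial_x\sigma)^2\theta$, an alternative is to redo the $k=2$ case of \eqref{eq:ka}. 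The troublesome term $\partial_x(\kappa\partial_x\theta)\sigma$ would then be absorbed by Young's inequality using the $L^2_tL^2_x$ bound on $\partial_x(\kappa\partial_x\theta)$ from Proposition \ref{prop:oncolletout}. The route above is shorter.
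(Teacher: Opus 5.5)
Your argument is correct, but it takes a different route from the paper. The paper does not read the bound off Proposition \ref{prop:oncolletout}. It uses the alternative you mention at the end: it redoes the $k=2$ case of \eqref{eq:ka}. There it bounds $\int_0^1\sigma^2\vert\partial_x u\vert$ by $\mu_{\min}^{-1}\int_0^1\vert\sigma\vert^3+R_{\max}\overline{\rho}\Vert\theta\Vert_\infty\int_0^1\sigma^2/\mu$ and absorbs $\frac{\gamma-1}{\mu}\partial_x(\kappa\partial_x\theta)\sigma$ by Young's inequality. It then closes by Gr\"onwall, using $\int_0^T\int_0^1\vert\sigma\vert^3$ and $\int_0^T\int_0^1[\partial_x(\kappa\partial_x\theta)]^2$ from $H_5$, $\int_0^T\Vert\theta\Vert_\infty\leq H_4$, and the initial quantity $\int_0^1\sigma_0^2$ (controlled by $C_0$ and the data bounds). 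Finally it uses $\rho\leq\overline{\rho}$ to drop the weight $1/\rho$.

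Your route is a short corollary instead. H\"older on the unit torus turns the $L^\infty_tL^3_x$ bound into the $L^\infty_tL^2_x$ one. The gradient bound comes from the $\theta$-weighted dissipation $\int_0^T\int_0^1(\partial_x\sigma)^2\theta$, produced by the auxiliary functional $\int_0^1\sigma^2\theta/\mu$ in \eqref{eq:etde4}, combined with $\theta\geq\underline{\theta}$ from Proposition \ref{prop:5}. This needs no new energy estimate, no Gr\"onwall step, and no estimate of $\sigma_0$.

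The price is that $H_6$ now carries a factor $1/\underline{\theta}$. It also leans on the weighted dissipation term, the most intricate part of the bookkeeping in Proposition \ref{prop:oncolletout}. The paper's computation uses neither $\underline{\theta}$ nor that term, only the unweighted quantities $\int\int\vert\sigma\vert^3$ and $\int\int[\partial_x(\kappa\partial_x\theta)]^2$. It is therefore more self-contained, at the cost of length. Neither route is circular, and your tracking of the dependence of $\underline{\theta}$ on $\overline{\rho}$, ${\cal E}$, ${\cal M}$ and the initial data is sound.
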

\begin{proof}
Let us apply \eqref{eq:ka} with $k=2$. We get
\begin{align}
\frac{1}{2}\frac{d}{dt}\int_0^1 \frac{\sigma^2}{\mu} + \int_0^1 \frac{(\partial_x\sigma)^2}{\rho} &= \int_0^1 \frac{1-2\gamma}{2 \mu}\sigma^2\partial_x u - \int_0^1 \frac{\gamma-1}{\mu}[\partial_x(\kappa\partial_x\theta)]\sigma
\\&\leq \frac{2\gamma_{\max}-1}{2\mu_{\min}}\int_0^1\sigma^2\vert\partial_x u\vert + \frac{\gamma_{\max}-1}{2\mu_{\min}}\int_0^1 \vert\partial_x(\kappa\partial_x\theta)\vert^2 + \frac{\gamma_{\max}-1}{2}\int_0^1\frac{\sigma^2}{\mu}.
\end{align} 
And
\begin{equation}
\vert\sigma^2\partial_x u\vert = \left\vert\frac{\sigma^3}{\mu} + \frac{\sigma^2R\rho\theta}{\mu}\right\vert \leq \frac{\vert\sigma\vert^3}{\mu_{\min}} + R_{\max}\overline{\rho}\Vert\theta\Vert_\infty \frac{\sigma^2}{\mu},
\end{equation}
thus
\begin{equation}
\begin{split}
\frac{1}{2}\frac{d}{dt}\int_0^1\frac{\sigma^2}{\mu} + \int_0^1\frac{(\partial_x\sigma)^2}{\rho} &\leq \frac{2\gamma_{\max}-1}{2\mu_{\min}^2}\int_0^1\vert\sigma\vert^3 + \frac{(2\gamma_{\max}-1)R_{\max}\overline{\rho}}{2\mu_{\min}}\Vert\theta\Vert_\infty\int_0^1 \frac{\sigma^2}{\mu}
\\& +\frac{\gamma_{\max}-1}{2\mu_{\min}}\int_0^1 \vert\partial_x(\kappa\partial_x\theta)\vert^2 + \frac{\gamma_{\max}-1}{2}\int_0^1 \frac{\sigma^2}{\mu}.
\end{split}
\end{equation}
By Grönwall's lemma, we get
\begin{equation}
\begin{split}
&\frac{1}{2}\sup_{0\leq t\leq T} \int_0^1\frac{\sigma^2}{\mu}+ \int_0^T\int_0^1\frac{(\partial_x\sigma)^2}{\rho} \\&\leq \left(\frac{1}{2\mu_{\min}}\int_0^1{\sigma_0^2} + \left(\frac{2\gamma_{\max}-1}{2\mu_{\min}^2}+\frac{\gamma_{\max}-1}{2\mu_{\min}}\right)H_5 \right)\exp\left(\frac{\gamma_{\max}-1}{2}T + \frac{(2\gamma_{\max}-1)R_{\max}\overline{\rho}}{2\mu_{\min}}H_7 \right)
\end{split},
\end{equation}
then Proposition \ref{prop:21} because $\rho\geq \underline{\rho}$ and $\mu\geq \mu_{\min}$.
\end{proof}
\begin{prop}\label{prop:sigmainf}
The following inequality holds:
\begin{equation}
\int_0^T\Vert\sigma\Vert_\infty^3 \leq 3(T+1)H_5.
\end{equation}
\end{prop}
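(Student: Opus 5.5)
We want $\int_0^T\Vert\sigma\Vert_\infty^3\le 3(T+1)H_5$. The plan is a one-dimensional Sobolev-type inequality for $f=\vert\sigma\vert^{3/2}$ (or directly for $\vert\sigma\vert^3$), with both resulting terms controlled by Proposition \ref{prop:oncolletout}.

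First, fix $t$ and take $x_0\in\T$ with $\vert\sigma(t,x_0)\vert^3\le\int_0^1\vert\sigma\vert^3$; such a point exists by the mean value property. For every $x\in\T$, the fundamental theorem of calculus applied to $\vert\sigma\vert^3$ gives
\begin{equation*}
\vert\sigma(t,x)\vert^3 \le \int_0^1\vert\sigma\vert^3 + 3\int_0^1 \sigma^2\vert\partial_x\sigma\vert .
\end{equation*}
Hence
\begin{equation*}
\Vert\sigma\Vert_\infty^3(t)\le \int_0^1\vert\sigma\vert^3 + 3\int_0^1\sigma^2\vert\partial_x\sigma\vert .
\end{equation*}

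Next, split the last integrand as $\sigma^2\vert\partial_x\sigma\vert=\vert\sigma\vert^{1/2}\vert\partial_x\sigma\vert\cdot\vert\sigma\vert^{3/2}$. Cauchy--Schwarz and Young then give
\begin{equation*}
\int_0^1\sigma^2\vert\partial_x\sigma\vert\le \frac12\int_0^1(\partial_x\sigma)^2\vert\sigma\vert+\frac12\int_0^1\vert\sigma\vert^3 .
\end{equation*}
Integrating in time over $[0,T]$, we obtain
\begin{equation*}
\int_0^T\Vert\sigma\Vert_\infty^3\le \frac52 T\sup_{t}\int_0^1\vert\sigma\vert^3+\frac32\int_0^T\int_0^1(\partial_x\sigma)^2\vert\sigma\vert .
\end{equation*}
Proposition \ref{prop:oncolletout} bounds the supremum and the dissipation term by $H_5$ each. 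This yields a bound of the form $(\tfrac52T+\tfrac32)H_5\le 3(T+1)H_5$, which is the claim.

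The only delicate point is the bookkeeping of constants in the Young inequality, so that the sum stays below $3(T+1)$. Regularity is not an issue: the solution is strong, so $\vert\sigma\vert^3$ is $C^1$-like in $x$ and the fundamental theorem of calculus applies. If the constants happened to exceed $3(T+1)$, I would choose the Young weights differently, keeping the coefficient of $\int(\partial_x\sigma)^2\vert\sigma\vert$ at most $3$ and that of the time-integrated $L^3$ term at most $3T$.
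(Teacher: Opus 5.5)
Your proof is correct and is essentially the paper's argument. The paper applies the one-dimensional Gagliardo--Nirenberg inequality \eqref{eq:GN} to $f=\vert\sigma\vert^{3/2}$ and then Young's inequality, giving $\Vert\sigma\Vert_\infty^3\le 2\int_0^1\vert\sigma\vert^3+\frac94\int_0^1(\partial_x\sigma)^2\vert\sigma\vert$; your fundamental-theorem-of-calculus computation re-derives the same inequality by hand, with constants $\frac52$ and $\frac32$ instead, and both versions then integrate in time and use Proposition \ref{prop:oncolletout} to stay below $3(T+1)H_5$.
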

\begin{proof}
Using the Gagliardo Nirenberg's inequality \eqref{eq:GN}, we get by Young's inequality,
\begin{align}
\int_0^T\Vert\sigma\Vert_\infty^3 = \int_0^1\Vert\vert\sigma\vert^{3/2}\Vert_\infty^2 &\leq 2\int_0^T\int_0^1 \vert\sigma\vert^3 + \frac{9}{4}\int_0^T\int_0^1(\partial_x\sigma)^2\vert\sigma\vert
\\&\leq 3(T + 1)H_5.
\end{align}
\end{proof}
\begin{prop}
There exists some $\overline{\theta}=\overline{\theta}(\overline{\rho_0},\underline{\rho_0},\overline{\theta_0},\underline{\theta_0},C_0)>0$ such that
\begin{equation}
\text{for almost all }(t,x)\in [0,T]\times\T,\quad \theta(t,x) \leq \overline{\theta}.
\end{equation}
\end{prop}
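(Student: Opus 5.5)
We derive the upper bound on $\theta$ from the one-dimensional Sobolev embedding, using the uniform-in-time bounds of Proposition~\ref{prop:oncolletout}. These give $\sup_{0\le t\le T}\int_0^1\rho\cv\theta^3\le H_5$ and $\sup_{0\le t\le T}\int_0^1\kappa(\partial_x\theta)^2\le H_5$. Since $\rho\ge\underline{\rho}$ (Proposition~\ref{prop:rholower}), $\cv\ge\cv_{\min}$ and $\kappa\ge\kappa_{\min}$, we get for every $t\in[0,T]$
\begin{equation*}
\int_0^1\theta^3(t)\le \frac{H_5}{\underline{\rho}\,\cv_{\min}},\qquad \int_0^1(\partial_x\theta)^2(t)\le\frac{H_5}{\kappa_{\min}}.
\end{equation*}
Equivalently, we could use $\int_0^1\theta\le {\cal E}/(\cv_{\min}{\cal M})\cdot\overline{\rho}/\underline{\rho}$ from \eqref{eq:27}. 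Either bound on a Lebesgue norm of $\theta$ is enough.

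Next, fix $t\in[0,T]$. By the mean value property there is $y\in\T$ with $\theta(t,y)\le\int_0^1\theta(t)\le\big(\int_0^1\theta^3(t)\big)^{1/3}$. For any $x\in\T$ we write $\theta(t,x)=\theta(t,y)+\int_y^x\partial_z\theta(t,z)\,dz$. Cauchy--Schwarz then gives
\begin{equation*}
\theta(t,x)\le \Big(\frac{H_5}{\underline{\rho}\,\cv_{\min}}\Big)^{1/3}+\Big(\frac{H_5}{\kappa_{\min}}\Big)^{1/2}=:\overline{\theta}.
\end{equation*}
Since $\theta$ is continuous for the strong solution under consideration, this holds for all $(t,x)$, hence almost everywhere.

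The last step is to track dependencies. $H_5$ depends only on $\overline{\rho_0},\underline{\rho_0},\overline{\theta_0},\underline{\theta_0},C_0$, through $H_4$, $G_\eta$ with $\eta=1/10$, and $T$. Also $\underline{\rho}$ depends on ${\cal E},{\cal M},H_1$, and these are controlled by the same data via Proposition~\ref{prop:1} and Proposition~\ref{prop:2}. So $\overline{\theta}$ depends only on the quantities in the statement (and on $T$ and the fixed physical constants).

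No step here is a real obstacle. All the analytic difficulty sits in Proposition~\ref{prop:oncolletout}, namely the closure of the $L^3$-type estimates with the oscillating coefficients $\kappa(c)$ and $\cv(c)$. The only point needing care is that these coefficients enter only as bounded weights in the lower bounds, never differentiated, so the $H^1$ control of $\theta$ does not degrade with the oscillation frequency.
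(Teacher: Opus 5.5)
Your argument is correct, but it takes a different route from the paper.

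The paper runs a maximum-principle (Moser-type) argument directly on \eqref{eq:thetameso}. It multiplies by $k\theta^{k-1}$, discards the conduction term through its sign, and splits $\sigma\partial_x u=\sigma^2/\mu+R\rho\theta\sigma/\mu$. Letting $k\to+\infty$, it obtains $\frac{d}{dt}\Vert\theta\Vert_\infty\le C\Vert\sigma\Vert_\infty^2+C\Vert\sigma\Vert_\infty\Vert\theta\Vert_\infty$. It then closes with Grönwall, using the bound on $\int_0^T\Vert\sigma\Vert_\infty^3$ from Proposition~\ref{prop:sigmainf}.

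You instead use the $L^\infty_tH^1_x$ control of $\theta$ that is already part of Proposition~\ref{prop:oncolletout}, together with the one-dimensional estimate $\Vert\theta\Vert_\infty\le\int_0^1\theta+\Vert\partial_x\theta\Vert_2$. There is no circularity: the proof of Proposition~\ref{prop:oncolletout} only uses the integrated bound $H_4$ on $\int_0^T\Vert\theta\Vert_\infty$, not a pointwise bound on $\theta$.

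Your route is shorter. It needs neither Proposition~\ref{prop:sigmainf} nor the $k\to\infty$ limit. Incidentally, in that limit the constant in \eqref{eq:poire} should be built from $\underline{\rho}\,\cv_{\min}$ rather than $\overline{\rho}\,\cv_{\max}$, since the exponent $1/k-1$ is negative.

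The paper's route, in exchange, never uses $\partial_x\theta$ or a lower bound on $\kappa$. It relies only on the sign of the dissipation and the time integrability of $\Vert\sigma\Vert_\infty$. It is therefore the kind of argument that persists when heat conduction is weak or absent, whereas your constant carries an explicit factor $\kappa_{\min}^{-1/2}$.

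A cosmetic remark: Proposition~\ref{prop:oncolletout} is stated for the unweighted quantities $\int_0^1\rho\theta^3$ and $\int_0^1(\partial_x\theta)^2$, while its proof bounds the $\cv$- and $\kappa$-weighted ones. Your constants therefore shift by harmless factors of $\cv_{\min}$ and $\kappa_{\min}$.
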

\begin{proof}
Starting with \eqref{eq:42} with $\beta :R\rightarrow \R$ some convex and non decreasing function, then integrating over the torus, we get
\begin{equation}
\frac{d}{dt}\int_0^1\rho\cv\beta(\theta) + \int_0^1 \kappa(\partial_x\theta)^2\beta''(\theta) = \int_0^1(\sigma\partial_x u)\beta'(\theta) = \int_0^1\frac{\sigma^2}{\mu}\beta'(\theta) + \int_0^1 \frac{R\sigma}{\mu}\rho\theta\beta'(\theta)
\end{equation} 
therefore
\begin{equation}
\frac{d}{dt}\int_0^1\rho\cv\beta(\theta) \leq \frac{\Vert\sigma\Vert_\infty^2}{\mu_{\min}}\int_0^1\beta'(\theta) + \frac{R_{\max}}{\mu_{\min}}\Vert\sigma\Vert_\infty\int_0^1\rho\theta\beta'(\theta).
\end{equation}
since $\beta''(\theta)\geq 0$ and $\beta'(\theta)\geq 0$. Choosing $\beta : x\mapsto x^k$ for $k\in \N^*$, we get
\begin{equation}
\frac{d}{dt}\int_0^1\rho\cv\theta^k \leq \frac{k\Vert\sigma\Vert_\infty^2}{\mu_{\min}}\int_0^1\theta^{k-1} + \frac{R_{\max}k}{\mu_{\min}\cv_{\min}}\Vert\sigma\Vert_\infty \int_0^1\rho\cv\theta^k.
\end{equation}
Multiplying this last equation by 
\begin{equation}
\frac{1}{k}\left(\int_0^1\rho\cv\theta^k\right)^{1/k-1},
\end{equation}
we obtain
\begin{equation}\label{eq:peche}
\frac{d}{dt}\left(\int_0^1\rho\cv\theta^k\right)^{1/k} \leq \frac{\Vert\sigma\Vert_\infty^2}{\mu_{\min}}\int_0^1\theta^{k-1}\left(\int_0^1\rho\cv\theta^k\right)^{1/k-1} +\frac{R_{\max}}{\mu_{\min}\cv_{\min}}\Vert\sigma\Vert_\infty\left(\int_0^1\rho\cv\theta^k\right)^{1/k}.
\end{equation}
Moreover, by Jensen's inequality,
\begin{equation}
\int_0^1\theta^{k-1}\leq \left(\int_0^1\theta^k\right)^{1-1/k},
\end{equation}
hence
\begin{equation}\label{eq:poire}
\int_0^1\theta^{k-1}\left(\int_0^1\rho\cv\theta^k\right)^{1/k-1}\leq \overline{\rho}^{1/k-1}\cv_{\max}^{1/k-1}.
\end{equation}
Finally, putting \eqref{eq:poire} in \eqref{eq:peche} then taking $k\rightarrow +\infty$, we get
\begin{equation}
\frac{d}{dt}\Vert\theta\Vert_\infty\leq \frac{\Vert\sigma\Vert_\infty^2}{\overline{\rho}\mu_{\min}\cv_{\max}} + \frac{R_{\max}}{\mu_{\min}\cv_{\min}}\Vert\sigma\Vert_\infty\Vert\theta\Vert_\infty.
\end{equation}
Finally, by Grönwall's lemma,
\begin{equation}
\sup_{0\leq t\leq T}\Vert\theta\Vert_\infty(t) \leq \left(\overline{\theta_0} + \frac{(2T+1)H_5}{\overline{\rho}\mu_{\min}\cv_{\max}}\right)\exp\left(\frac{R_{\max}}{\mu_{\min}\cv_{\min}}\sqrt{2T+1}\sqrt{T}\right) =: \overline{\theta}.
\end{equation}
\end{proof}
\begin{prop}
There exists some $H_7=H_7(C_0,\overline{\rho_0},\underline{\rho_0},\overline{\theta_0},\underline{\theta_0},T)>0$ such that
\begin{equation}\label{eq:domusss}
\sup_{0\leq t\leq T}\int_0^1\vert\partial_x u\vert^3 + \int_0^T\int_0^1\vert\partial_t u\vert^2  + \int_0^T\int_0^1\vert\partial_t\theta\vert^2+ \int_0^T\Vert\partial_x u\Vert_\infty^3 + \sup_{[0,T]\times\T}\vert u\vert^2\leq H_7
\end{equation}
\end{prop}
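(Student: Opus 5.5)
The five quantities in \eqref{eq:domusss} all follow from the identity $\partial_x u = (\sigma + R\rho\theta)/\mu$ and from the bounds already obtained: $\underline{\rho}\le\rho\le\overline{\rho}$, $\underline{\theta}\le\theta\le\overline{\theta}$, Proposition \ref{prop:oncolletout}, Proposition \ref{prop:21} and Proposition \ref{prop:sigmainf}. We will treat them in the order below.

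\textbf{Velocity gradient.} Since $|\partial_x u|\le \mu_{\min}^{-1}(|\sigma| + R_{\max}\overline{\rho}\,\overline{\theta})$, we get pointwise
\begin{equation*}
|\partial_x u|^3\le 4\mu_{\min}^{-3}\big(|\sigma|^3 + (R_{\max}\overline{\rho}\,\overline{\theta})^3\big).
\end{equation*}
Integrating in $x$ and using the $\sup_t\int_0^1|\sigma|^3$ bound of Proposition \ref{prop:oncolletout} gives $\sup_t\int_0^1|\partial_x u|^3$. Taking instead the supremum in $x$, cubing and integrating in time, Proposition \ref{prop:sigmainf} gives $\int_0^T\Vert\partial_x u\Vert_\infty^3$.

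\textbf{Sup of $u$.} By \eqref{eq:31}, $\int_0^1\rho u=0$. Hence for each $t$ there is a point $y$ with $u(t,y)=0$: otherwise $u$ has a fixed sign and $\rho\ge\underline{\rho}>0$ would make the integral nonzero. Then $|u(t,x)|\le\int_0^1|\partial_x u|\le(\int_0^1|\partial_x u|^3)^{1/3}$, which bounds $\sup_{[0,T]\times\T}|u|^2$ by the previous step.

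\textbf{Time derivatives.} For $\partial_t u$ we use the momentum equation in nonconservative form, $\rho\,\partial_t u = \partial_x\sigma - \rho u\partial_x u$. This gives
\begin{equation*}
|\partial_t u|^2\le 2\underline{\rho}^{-2}|\partial_x\sigma|^2 + 2\Vert u\Vert_\infty^2|\partial_x u|^2.
\end{equation*}
The first term is integrable by Proposition \ref{prop:21}. The second is integrable because $\int_0^T\int_0^1|\partial_x u|^2\le T\sup_t(\int_0^1|\partial_x u|^3)^{2/3}$.

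For $\partial_t\theta$ we use \eqref{eq:thetameso} together with the continuity equation and \eqref{eq:cmeso}, which give
\begin{equation*}
\cv\rho(\partial_t\theta+u\partial_x\theta)=\sigma\partial_x u+\partial_x(\kappa\partial_x\theta),
\end{equation*}
so that
\begin{equation*}
|\partial_t\theta|\le \Vert u\Vert_\infty|\partial_x\theta| + (\underline{\rho}\cv_{\min})^{-1}\big(|\sigma\partial_x u| + |\partial_x(\kappa\partial_x\theta)|\big).
\end{equation*}
Squaring and integrating, we bound each term as follows:
\begin{itemize}
\item $\int\!\!\int|\partial_x\theta|^2$ is controlled by $\sup_t\int_0^1(\partial_x\theta)^2$ from Proposition \ref{prop:oncolletout};
\item $\int\!\!\int|\partial_x(\kappa\partial_x\theta)|^2$ is controlled directly by Proposition \ref{prop:oncolletout};
\item $\int_0^T\int_0^1\sigma^2(\partial_x u)^2\le\int_0^T\Vert\partial_x u\Vert_\infty^2\int_0^1\sigma^2$ is finite by the sup bound on $\int_0^1\sigma^2$ and Hölder in time applied to $\int_0^T\Vert\partial_x u\Vert_\infty^3$.
\end{itemize}

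\textbf{Main obstacle.} There is no real difficulty; the only delicate point is justifying the sup bound on $u$ through a zero of $u$. Alternatively one can use $|u(x)|\le \frac{1}{\cM}\big|\int\rho u\big| + \int|\partial_x u|$ after writing $u(x)=u(y)+\int_y^x\partial_x u$, multiplying by $\rho(y)$ and integrating in $y$. One should also check that all constants depend only on the data listed, which holds since $\overline{\theta},\underline{\rho},\overline{\rho}$, $H_5$ and $H_6$ do.
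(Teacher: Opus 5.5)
Your proposal is correct and follows the paper's route. You bound $|\partial_x u|$ pointwise by $|\sigma|+\overline{\rho}\,\overline{\theta}$ and use Propositions \ref{prop:oncolletout} and \ref{prop:sigmainf}, then use the nonconservative momentum and temperature equations for $\partial_t u$ and $\partial_t\theta$. The differences are cosmetic and both variants are valid: you bound $\sup|u|$ through a zero of $u$ (or the mass-weighted mean) where the paper uses Gagliardo--Nirenberg with $\int_0^1 u^2\le 2\mathcal{E}/\underline{\rho}$, and in $\int\!\!\int\sigma^2(\partial_x u)^2$ you put the $L^\infty$ norm on $\partial_x u$ where the paper puts it on $\sigma$.
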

\begin{proof}
We get
\begin{equation}
\vert\partial_x u\vert^3 \leq 3\frac{\sigma^3}{\mu^3} + 3\frac{R^3\rho^3\theta^3}{\mu^3},
\end{equation}
thus, from Proposition \ref{prop:oncolletout} and Proposition \ref{prop:sigmainf},
\begin{equation}\label{eq:cestqueledbut}
\int_0^1\vert\partial_x u\vert^3 \leq \frac{3H_5}{\mu_{\min}^3} + \frac{3R_{\max}^3\overline{\rho}^3\overline{\theta}^3}{\mu_{\min}^3},\quad \int_0^T\Vert\partial_x u\Vert_\infty^3 \leq \frac{9(T+1)H_5}{\mu_{\min}^3} + \frac{3TR_{\max}^3\overline{\rho}^3\overline{\theta}^3}{\mu_{\min}^3}.
\end{equation} 
Then, by the Gagliardo--Nirenberg inequality \eqref{eq:GN},
\begin{equation}
\sup_{[0,T]\times\T}\vert u\vert^2\leq 2\int_0^1\vert u\vert^2 + \int_0^1\vert\partial_x u\vert^2 \leq \frac{4{\cal E}}{\underline{\rho}} + \left(\int_0^1\vert\partial_x u\vert^3\right)^{2/3}.
\end{equation}
Moreover,
\begin{align}
\int_0^T \int_0^1\vert\partial_t u\vert^2 &\leq 2\int_0^T\int_0^1 \vert u\partial_x u\vert^2 + 2\int_0^T\int_0^1\left\vert\frac{\partial_x\sigma}{\rho}\right\vert^2\nonumber
\\&\leq 2 T \left(\sup_{[0,T]\times\T} \vert u\vert^2\right) \left(\int_0^1 \vert\partial_x u\vert^3\right)^{2/3} + \frac{2H_6}{\underline{\rho}^2}.
\end{align}
Finally,
\begin{align}
\int_0^T\int_0^1\vert\partial_t \theta\vert^2 &\leq 2\int_0^T\int_0^1 \left\vert\frac{u\partial_x\theta}{\rho\cv}\right\vert^2 + 2 \int_0^T\int_0^1 \left\vert \frac{\sigma\partial_x u}{\rho\cv}\right\vert^2 + 2\int_0^T\int_0^1 \left\vert\frac{\partial_x(\kappa\partial_x\theta)}{\rho\cv}\right\vert^2\nonumber
\\&\leq \frac{2TH_5}{\underline{\rho}^2\cv_{\min}^2}\sup_{[0,T]\times\T} \vert u\vert^2 + \frac{2}{\underline{\rho}^2\cv_{\min}^2}\int_0^T(1+\Vert\sigma\Vert_\infty^3)\left(\int_0^1\vert\partial_x u\vert^3\right)^{2/3} + \frac{2H_5}{\underline{\rho}^2\cv_{\min}^2}.\label{eq:daccorddaccord}
\end{align}
Combining \eqref{eq:cestqueledbut}--\eqref{eq:daccorddaccord} gives \eqref{eq:domusss}.
\end{proof}
\section{Homogenization}\label{sec:homog}
We present here the different steps required to perform the homogenization, in order to prove Theorem \ref{thm:main}. Some arguments are only sketched because it is very similar to that in \cite{BrBuGJLa}. First, we establish the strong convergence of velocity and temperature. Then, the Young measure $\nu$ is introduced in order to describe the weak convergence of the density and the color function. Roughly speaking, the knowledge of this Young measure allows us to determine the limit of all nonlinear terms in $c$ and $\rho$, and thus concludes the proof of Theorem \ref{thm:main}. The last step is to characterize the Young measure. Initially considering this measure as a convex combination of two Dirac measures, denoted by $\nu_0$, weighted by the volume fraction (which is consistent with the modeling presented in the introduction), we want to show that this ansatz remains true over time. The strategy is as follows:
\begin{itemize}
\item Assuming that the Young measure $\nu$ satisfies the ansatz, we formally determine the macroscopic system \eqref{eq:cmacrotemp}--\eqref{eq:energmacrotemp}.
\item Conversely, by constructing solutions to a macroscopic problem close to \eqref{eq:cmacrotemp}--\eqref{eq:energmacrotemp} (but much easier because $u$, $\theta$ and $\sigma$ are seen as given), we construct a candidate Young measure $\widetilde{\nu}$, which coincides with the Young measure $\nu$ at time $t=0$.
\item We show that both $\nu$ and $\widetilde{\nu}$ satisfy the kinetic equation \eqref{eq:kineq}.
\item Finally, we prove that the kinetic equation \eqref{eq:kineq} has a unique solution equal to $\nu_0$ at time $t=0$.
\end{itemize}

In this section, Let us fix $C_0,\underline{\rho_0},\overline{\rho_0},\underline{\theta_0},\overline{\theta_0}>0$. For all $\varepsilon>0$, we consider some $(c_0^\varepsilon,\rho_0^\varepsilon,u_0^\varepsilon,\theta_0^\varepsilon)\in L^2(\T)^4$ verifying \eqref{eq:debinitcond}--\eqref{eq:fininitcond} with $C_0,\underline{\rho_0},\overline{\rho_0},\underline{\theta_0},\overline{\theta_0}>0$. We denote by $(c^\varepsilon,\rho^\varepsilon,u^\varepsilon,\theta^\varepsilon)$ the "à la Hoff" solution of \eqref{eq:cmeso}--\eqref{eq:energmeso} with initial condition $(c_0^\varepsilon,\rho_0^\varepsilon,u_0^\varepsilon,\theta_0^\varepsilon)$.
\subsection{Strong convergence on \texorpdfstring{$u^\varepsilon$ and $\theta^\varepsilon$}{ueps and thetaeps}}

\begin{prop}\label{prop:twin}
There exists some $u\in L^\infty(0,T,W^{1,3}(\T))\cap L^3(0,T,W^{1,\infty}(\T))$ and \newline$\theta\in L^\infty(0,T,W^{1,2}(\T))\cap L^2(0,T,W^{1,\infty}(\T))$ such that, up to some subsequence,
\begin{equation}\label{eq:giletmoche}
u^\varepsilon \tendf{\varepsilon}{0} u\quad\text{in }L^2(0,T,H^1(\T)),
\end{equation}
\begin{equation}\label{eq:papille}
u^\varepsilon \tend{\varepsilon}{0} u\quad \text{in } L^2(0,T,L^2(\T)),
\end{equation}
\begin{equation}\label{eq:gilettresmoche}
\theta^\varepsilon\tendf{\varepsilon}{0} \theta\quad\text{in }L^2(0,T,H^1(\T)),
\end{equation}
\begin{equation}\label{eq:narine}
\theta^\varepsilon\tend{\varepsilon}{0}\theta\quad\text{in }L^2(0,T,L^2(\T)).
\end{equation}
\end{prop}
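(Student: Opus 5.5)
The plan is to derive everything from the uniform bounds of Theorem \ref{thm:bounds}, which do not depend on $\varepsilon$, and then apply standard compactness: Banach--Alaoglu for the weak limits and Aubin--Lions for the strong ones.

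\textbf{Uniform bounds on $u^\varepsilon$.} By \eqref{eq:def14}, $\sup_t\Vert u^\varepsilon\Vert_\infty$ and $\sup_t\Vert\partial_x u^\varepsilon\Vert_3$ are bounded uniformly in $\varepsilon$. Hence $(u^\varepsilon)$ is bounded in $L^\infty(0,T,W^{1,3}(\T))$, and therefore in $L^2(0,T,H^1(\T))$. From $\int_0^T\Vert\partial_x u^\varepsilon\Vert_\infty^3\leq C_1$, it is also bounded in $L^3(0,T,W^{1,\infty}(\T))$. Finally, $\partial_t u^\varepsilon$ is bounded in $L^2(0,T,L^2(\T))$.

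\textbf{Uniform bounds on $\theta^\varepsilon$.} The bounds $\underline\theta\leq\theta^\varepsilon\leq\overline\theta$ and $\sup_t\Vert\partial_x\theta^\varepsilon\Vert_2\leq C_1$ give a bound in $L^\infty(0,T,H^1(\T))$. Again $\partial_t\theta^\varepsilon$ is bounded in $L^2(0,T,L^2(\T))$.

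\textbf{The $L^2(0,T,W^{1,\infty})$ bound on $\theta^\varepsilon$.} This is the only step that is not a direct reading of \eqref{eq:def14}. The key point is that $\kappa(c^\varepsilon)$ oscillates, so $\partial_{xx}\theta^\varepsilon$ is not controlled. Instead I would work with the heat flux $q^\varepsilon=\kappa(c^\varepsilon)\partial_x\theta^\varepsilon$. It satisfies $\Vert q^\varepsilon\Vert_2\leq\kappa_{\max}\Vert\partial_x\theta^\varepsilon\Vert_2$, and $\partial_x q^\varepsilon$ is bounded in $L^2(0,T,L^2(\T))$ by \eqref{eq:def14}. So $q^\varepsilon$ is bounded in $L^2(0,T,H^1(\T))\subset L^2(0,T,L^\infty(\T))$, using the Gagliardo--Nirenberg inequality \eqref{eq:GN}. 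Since $\vert\partial_x\theta^\varepsilon\vert\leq\vert q^\varepsilon\vert/\kappa_{\min}$, it follows that $\partial_x\theta^\varepsilon$ is bounded in $L^2(0,T,L^\infty(\T))$.

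\textbf{Extracting the limits.} By Banach--Alaoglu, I extract a subsequence along which $u^\varepsilon$ and $\theta^\varepsilon$ converge weakly-$\star$ in each of the spaces above. The limits coincide as distributions, so $u$ and $\theta$ lie in the stated intersections; in particular the weak convergences \eqref{eq:giletmoche} and \eqref{eq:gilettresmoche} in $L^2(0,T,H^1(\T))$ hold.

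For the strong convergences \eqref{eq:papille} and \eqref{eq:narine}, I apply the Aubin--Lions lemma with the triple $H^1(\T)\Subset L^2(\T)\subset L^2(\T)$. The families $u^\varepsilon$ and $\theta^\varepsilon$ are bounded in $L^2(0,T,H^1)$ and their time derivatives are bounded in $L^2(0,T,L^2)$, so they are relatively compact in $L^2(0,T,L^2(\T))$. Uniqueness of limits identifies the strong limits with $u$ and $\theta$, after taking a further subsequence if necessary.
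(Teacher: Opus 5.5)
Your proof is correct and follows the same route as the paper: uniform bounds from Theorem \ref{thm:bounds} followed by compactness. The paper gets the strong convergence from Rellich--Kondrachov applied to a bound in $H^1$ of the space-time domain $(0,T)\times\T$ (written there as $H^1(0,T,H^1(\T))$), which amounts to your Aubin--Lions argument with $\partial_t$ and $\partial_x$ bounded in $L^2$, and your use of the flux $\kappa(c^\varepsilon)\partial_x\theta^\varepsilon$ to get the $L^2(0,T,W^{1,\infty}(\T))$ bound on $\theta$ usefully supplies a step the paper leaves implicit in ``passing to the limit in \eqref{eq:def14}''.
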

\begin{proof}
By Theorem \ref{thm:bounds}, $(u^\varepsilon)$ and $(\theta^\varepsilon)$ are bounded in $H^1(0,T, H^1(\T))$, uniformly in $\varepsilon$. Then we obtain \eqref{eq:papille}, \eqref{eq:narine} by the Rellich–Kondrachov theorem. The unknowns $u$ and $\theta$ have the announced regularity passing to the limit in \eqref{eq:def14}.
\end{proof}
\begin{remark}
A legitimate question that one might ask is whether $(\sigma^\varepsilon)$ converges strongly in $L^2(0,T,L^2(\T))$. As seen in \cite{BrBuGJLa}, to answer positively, it would surely be necessary to show an estimate on the so-called “Hoff's second energy.” We do not discuss this in this article, as strong convergence in $L^2$ is not useful in the rest of the analysis.
\end{remark}
\subsection{Describing the macroscopic model with Young measures}
The quantities $c^\varepsilon$ and $\rho^\varepsilon$ are oscillatory more and more strongly when $\varepsilon\rightarrow 0$. Thus, passing to the limit in nonlinear functions of $c^\varepsilon, \rho^\varepsilon$ presents some difficulty. Roughly speaking, we will have for example
\begin{equation}
\lim_{\varepsilon\rightarrow 0} c^\varepsilon\rho^\varepsilon \neq (\lim_{\varepsilon\rightarrow 0} c^\varepsilon)(\lim_{\varepsilon\rightarrow 0} \rho^\varepsilon)\quad a.e.
\end{equation}  
In order to understand the limit of nonlinear functions of $c^\varepsilon$ and $\rho^\varepsilon$, let us introduce the Young measure $\nu^\varepsilon : [0,T]\times\T \rightarrow {\cal P}(K)$ defined as
\begin{equation}
\text{for all }(t,x)\in [0,T]\times\T,\quad \text{for all }\beta\in C(K),\quad  \langle\nu_{t,x}^\varepsilon,\beta\rangle = \beta(c^\varepsilon(t,x),\rho^\varepsilon(t,x)),
\end{equation}
where $K = [0,1]\times [\underline{\rho},\overline{\rho}]$.
The measure $\nu^\varepsilon$ converges in the following sense:
\begin{prop}
There exists some $\nu\in C(0,T,{\cal M}(K\times\T))\cap L^1(]0,T[\times\T), C(K))'$ such that, up to an extraction
\begin{equation}
\nu^\varepsilon \tend{\varepsilon}{0} \nu \quad \text{in }C(0,T,{\cal M}(K\times\T))\cap L^1(]0,T[\times\T), C(K))'.
\end{equation}
\end{prop}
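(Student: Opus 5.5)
The plan is to combine two standard compactness facts: weak-$\star$ compactness of parametrized measures in the dual of $L^1(]0,T[\times\T, C(K))$, and an Arzelà--Ascoli argument in time. For the latter, the time equicontinuity comes from the transport structure \eqref{eq:cmeso}--\eqref{eq:contmeso} together with the uniform bounds of Theorem \ref{thm:bounds}.

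\textbf{Step 1: compactness in $L^1(]0,T[\times\T,C(K))'$.} By Proposition \ref{prop:4} and Proposition \ref{prop:rholower}, with $\underline{\rho},\overline{\rho}$ independent of $\varepsilon$, and by \eqref{eq:defc}, the pair $(c^\varepsilon,\rho^\varepsilon)$ takes values in the fixed compact set $K$. Hence each $\nu^\varepsilon$ is a measurable family of probability measures on $K$, and
\begin{equation*}
|\langle \nu^\varepsilon,\Phi\rangle|\leq \Vert\Phi\Vert_{L^1(C(K))}.
\end{equation*}
So $(\nu^\varepsilon)$ is bounded in $L^\infty_{w}(]0,T[\times\T,\mathcal{M}(K)) = L^1(]0,T[\times\T,C(K))'$. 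This predual is separable, so Banach--Alaoglu gives a subsequence converging weak-$\star$ to some $\nu$. Positivity and unit mass of $\nu_{t,x}$ pass to the limit.

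\textbf{Step 2: equicontinuity in time.} For $\beta\in C^1(K)$, the transport equations give
\begin{equation*}
\partial_t \beta(c^\varepsilon,\rho^\varepsilon)+\partial_x(u^\varepsilon\beta(c^\varepsilon,\rho^\varepsilon)) = \bigl(\beta-\rho^\varepsilon\partial_\rho\beta\bigr)(c^\varepsilon,\rho^\varepsilon)\,\partial_x u^\varepsilon .
\end{equation*}
Now test against $\varphi\in C^1(\T)$. Using $\sup|u^\varepsilon|$ and $\sup_t\Vert\partial_x u^\varepsilon\Vert_{3}$ from \eqref{eq:def14}, the map $t\mapsto \int_\T\varphi\,\beta(c^\varepsilon,\rho^\varepsilon)\,dx$ is Lipschitz on $[0,T]$, uniformly in $\varepsilon$. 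The Lipschitz constant depends only on $\Vert\beta\Vert_{C^1(K)}$ and $\Vert\varphi\Vert_{C^1}$. Products $\varphi(x)\beta(c,\rho)$ with $\beta,\varphi$ ranging over countable dense subsets span a dense subspace of $C(K\times\T)$. Since $\Vert\nu^\varepsilon(t)\Vert_{\mathcal{M}(K\times\T)}\le 1$, a density argument gives equicontinuity of $t\mapsto\nu^\varepsilon(t)$ in the weak-$\star$ topology of $\mathcal{M}(K\times\T)$. On bounded sets this topology is metrizable, through the distance $\sum_j 2^{-j}|\langle\cdot,\psi_j\rangle|$ over the dense family $\psi_j$.

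\textbf{Step 3: identification of the two limits.} Arzelà--Ascoli in $C([0,T],\mathcal{M}(K\times\T)\text{-weak}\star)$ provides a further subsequence converging uniformly in $t$. The limit so obtained agrees with the $L^1(C(K))'$ limit $\nu$: test both against $\psi(t)\varphi(x)\beta(c,\rho)$, and note that such functions are dense in $L^1(]0,T[\times\T,C(K))$.

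\textbf{Main obstacle.} The delicate part is Step 2, namely making the equicontinuity uniform over all of $C(K\times\T)$ rather than over smooth test functions. The plan is to handle it via the uniform total mass bound plus density, which works because $\Vert\nu^\varepsilon(t)\Vert\le 1$. A second point to check is that the source term $(\beta-\rho\partial_\rho\beta)\partial_x u^\varepsilon$ is controlled in $L^\infty_t L^1_x$; this follows since $\partial_x u^\varepsilon\in L^\infty(0,T,L^3)$ uniformly by \eqref{eq:def14}.
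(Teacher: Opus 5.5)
Your proof is correct and follows essentially the argument the paper intends when it defers to Lemma 5.1 of \cite{BrBuGJLa}. That argument runs as follows: weak-$\star$ compactness in $L^1(]0,T[\times\T,C(K))'$ from the $\varepsilon$-uniform bounds $0\le c^\varepsilon\le 1$ and $\underline{\rho}\le\rho^\varepsilon\le\overline{\rho}$; uniform-in-$\varepsilon$ time equicontinuity of $t\mapsto\nu^\varepsilon(t)$, derived from the renormalized transport equation for $\beta(c^\varepsilon,\rho^\varepsilon)$ together with the bounds on $\sup\vert u^\varepsilon\vert$ and $\sup_t\Vert\partial_x u^\varepsilon\Vert_3$; then Arzel\`a--Ascoli in the weak-$\star$ topology and identification of the two limits by density. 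The chain rule for $\beta(c^\varepsilon,\rho^\varepsilon)$ at the level of weak solutions is left implicit, which is harmless because $\partial_x u^\varepsilon\in L^1(0,T;L^\infty(\T))$ makes the characteristics Lipschitz, and the paper uses the same identity in Proposition \ref{prop:lala}.
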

The proof of this proposition is a straightforward adaptation of the proof of Lemma 5.1. in \cite{BrBuGJLa}. Note that $\nu$ encodes the limit of the nonlinear functions of $c^\varepsilon$ and $\rho^\varepsilon$. Indeed, observe that if $\beta\in C(K)$, then
\begin{equation}
\beta(c^\varepsilon,\rho^\varepsilon) = \langle\nu^\varepsilon,\beta\rangle \tend{\varepsilon}{0} \langle \nu,\beta\rangle \quad \text{in }L^\infty([0,T]\times \T).
\end{equation}
\begin{prop}\label{prop:lala}
There exists some $\sigma\in L^2(0,T,H^1(\T))\cap L^\infty(0,T,L^3(\T))$ and $\mathfrak{f}\in L^2(0,T,H^1(\T))\in L^\infty(0,T,L^2(\T))$ such that, up to a subsequence, 
\begin{equation}\label{eq:chine}
\sigma^\varepsilon \tendf{\varepsilon}{0} \sigma\quad \text{in }L^2(0,T,H^{1}(\T)),
\end{equation}
\begin{equation}\label{eq:inde}
\kappa(c^\varepsilon)\partial_x\theta^\varepsilon \tendf{\varepsilon}{0} \mathfrak{f} \quad\text{in }L^2(0,T,H^{1}(\T)),
\end{equation}
\begin{equation}\label{eq:france}
\text{for all }\beta\in C(K),\quad \langle \nu^\varepsilon,\beta\rangle \sigma^\varepsilon \tendf{\varepsilon}{0} \langle \nu,\beta\rangle \sigma\quad\text{in }L^2(0,T,L^2(\T)),
\end{equation}
\begin{equation}\label{eq:espagne}
\text{for all }\beta\in C(K),\quad \langle \nu^\varepsilon,\beta\rangle \kappa(c^\varepsilon)\partial_x\theta^\varepsilon \tendf{\varepsilon}{0} \langle \nu,\beta\rangle \mathfrak{f}\quad\text{in }L^2(0,T,L^2(\T)).
\end{equation}
\end{prop}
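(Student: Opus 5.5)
The plan is to deduce the two weak convergences \eqref{eq:chine}--\eqref{eq:inde} from the uniform bounds of Theorem \ref{thm:bounds}, and then to handle the products \eqref{eq:france}--\eqref{eq:espagne} by a compensated-compactness (div-curl / compactness in time) argument. The point is that $\sigma^\varepsilon$ and $\kappa(c^\varepsilon)\partial_x\theta^\varepsilon$ are regular in $x$, while $\langle\nu^\varepsilon,\beta\rangle$ is transported and hence controlled in time.

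\textbf{Step 1: weak limits.} By \eqref{eq:def14} and Proposition \ref{prop:21}, $\sigma^\varepsilon$ is bounded in $L^\infty(0,T,L^3(\T))$ and $\partial_x\sigma^\varepsilon$ in $L^2(0,T,L^2(\T))$. Hence $\sigma^\varepsilon$ is bounded in $L^2(0,T,H^1(\T))$. Similarly, $\kappa(c^\varepsilon)\partial_x\theta^\varepsilon$ is bounded in $L^\infty(0,T,L^2(\T))$, since $\kappa\le\kappa_{\max}$ and $\sup_t\int_0^1|\partial_x\theta^\varepsilon|^2\le C_1$. Its derivative $\partial_x(\kappa\partial_x\theta^\varepsilon)$ is bounded in $L^2L^2$. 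Banach--Alaoglu then gives, after extraction, $\sigma$ and $\mathfrak f$ with \eqref{eq:chine}--\eqref{eq:inde}, together with the stated regularity obtained by lower semicontinuity of norms.

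\textbf{Step 2: products with $\langle\nu^\varepsilon,\beta\rangle$.} Here the two factors need complementary compactness. By density (Stone--Weierstrass on $K$) and the uniform $L^\infty$ bound on $\langle\nu^\varepsilon,\beta\rangle$ together with the uniform $L^2L^2$ bounds on $\sigma^\varepsilon,\kappa\partial_x\theta^\varepsilon$, it suffices to treat $\beta\in C^1(K)$. A further reduction is to take $\beta$ polynomial in $(c,\rho)$.

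For such $\beta$, set $b^\varepsilon=\beta(c^\varepsilon,\rho^\varepsilon)$. Equations \eqref{eq:cmeso}--\eqref{eq:contmeso} give
\[\partial_t b^\varepsilon+\partial_x(u^\varepsilon b^\varepsilon)=\big(b^\varepsilon-\rho^\varepsilon\partial_\rho\beta(c^\varepsilon,\rho^\varepsilon)\big)\partial_x u^\varepsilon .\]
Since $u^\varepsilon$ is bounded in $L^\infty$ and $\partial_x u^\varepsilon$ in $L^\infty L^3$, $\partial_t b^\varepsilon$ is bounded in $L^2(0,T,H^{-1}(\T))$, while $b^\varepsilon$ is bounded in $L^\infty$. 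Meanwhile $g^\varepsilon\in\{\sigma^\varepsilon,\kappa(c^\varepsilon)\partial_x\theta^\varepsilon\}$ is bounded in $L^2(0,T,H^1(\T))$.

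The standard lemma (Lions' compactness in the form used in \cite{BrBuGJLa, Hi2}) then applies. If $b^\varepsilon\rightharpoonup b$ weakly-$\star$ with $\partial_t b^\varepsilon$ bounded in $L^1(0,T,H^{-1})$, and $g^\varepsilon\rightharpoonup g$ weakly in $L^2H^1$, then $b^\varepsilon g^\varepsilon\rightharpoonup bg$ in $\mathcal D'$. Concretely, Aubin--Lions gives $b^\varepsilon\to b$ strongly in $C([0,T],H^{-1}(\T))$, since $L^\infty(\T)\subset\subset H^{-1}(\T)$. Pairing this with $g^\varepsilon\varphi$, which is bounded in $L^2H^1$ for a test function $\varphi$, passes the product to the limit. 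Since the uniform $L^2L^2$ bound on the products holds, the convergence upgrades to weak $L^2(0,T,L^2(\T))$. Finally $b=\langle\nu,\beta\rangle$ by the convergence of $\nu^\varepsilon$ stated just above.

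\textbf{Main obstacle.} The delicate point is the time derivative of $\beta(c^\varepsilon,\rho^\varepsilon)$ for general $\beta$, which need not be differentiable and need not be compatible with renormalization for $c^\varepsilon$ taking only values in $[0,1]$. I would handle it as above: prove the claim for smooth or polynomial $\beta$, where the renormalized transport equations are justified for the strong solutions, and then extend by density using only $L^\infty$ bounds. A secondary check is that the constants in the bound on $\partial_t b^\varepsilon$ depend only on $\|\beta\|_{C^1(K)}$, so that the density argument closes.
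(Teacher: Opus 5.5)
Your proposal is correct and follows the paper's own argument. You get the two weak limits from the uniform bounds via Banach--Alaoglu. Then, for $\beta\in C^1(K)$, you use the transport equation to bound $\partial_t\beta(c^\varepsilon,\rho^\varepsilon)$ in $L^2(0,T,H^{-1}(\T))$, apply Aubin--Lions to get strong $H^{-1}$ convergence, pair it with the weak $L^2(0,T,H^1(\T))$ convergence, and extend to $\beta\in C(K)$ by density.

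Your version is slightly more careful than the paper's in two places. First, you bound $\kappa(c^\varepsilon)\partial_x\theta^\varepsilon$ in $L^2(0,T,H^1(\T))$ directly through $\partial_x(\kappa\partial_x\theta^\varepsilon)$, whereas the paper only invokes a bound on $\theta^\varepsilon$. Second, you use the correct compact embedding $L^2(\T)\subset\subset H^{-1}(\T)$, which the paper states backwards. Your extra check that the constants depend only on $\|\beta\|_{C^1(K)}$ is unnecessary: the density step only needs the uniform sup-norm bound on $\beta-\beta_n$.
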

\begin{proof}
By Theorem \ref{thm:bounds}, $(\sigma^\varepsilon)$ and $(\theta^\varepsilon)$ are bounded in $L^2(0,T,H^{1}(\T)$ uniformly in $\varepsilon$. As a consequence \eqref{eq:chine} and \eqref{eq:inde} by the Banach-Alaoglu's theorem.

Let $\beta\in C^1(K)$. Then $(\beta(c^\varepsilon,\rho^\varepsilon))_\varepsilon$ is bounded in $L^2(0,T,L^2(\T))$ uniformly in $\varepsilon$. Moreover, from \eqref{eq:cmeso} and \eqref{eq:contmeso},
\begin{equation}
\partial_t \beta(c^\varepsilon,\rho^\varepsilon) = \partial_x(u^\varepsilon\beta(c^\varepsilon,\rho^\varepsilon)) + (\beta(c^\varepsilon,\rho^\varepsilon)- \rho^\varepsilon\partial_\rho\beta(c^\varepsilon,\rho^\varepsilon)) \partial_x u^\varepsilon.
\end{equation}
Thus, using Theorem \ref{thm:bounds}, $\partial_t \beta(c^\varepsilon,\rho^\varepsilon)$ is bounded in $L^2(0,T,H^{-1}(\T))$. As the inclusion $H^{-1}(\T)\subset L^2(\T)$ is compact, we can apply the Aubin-Lions lemma to get
\begin{equation}\label{eq:norita}
\langle \nu^\varepsilon,\beta\rangle  = \beta(c^\varepsilon,\rho^\varepsilon) \tend{\varepsilon}{0} \langle \nu,\beta\rangle\quad\text{in }L^2(0,T,H^{-1}(\T)).
\end{equation}
The convergence \eqref{eq:norita} is still true for $\beta\in C(K)$ by a straightforward density argument. Combining \eqref{eq:chine}, \eqref{eq:inde} and \eqref{eq:norita} gives \eqref{eq:france}, \eqref{eq:espagne}.
\end{proof}
\begin{cor}
The following equalities hold:
\begin{equation}\label{eq:fluxeff}
\sigma = \frac{1}{\langle\nu, 1/\mu\rangle}\partial_x u - \frac{\langle \nu, R\rho/\mu\rangle}{\langle \nu,1/\mu\rangle}\theta,
\end{equation}
\begin{equation}\label{eq:fouriereff}
\mathfrak{f} = \frac{1}{\langle\nu, 1/\kappa \rangle}\partial_x\theta.
\end{equation}
\end{cor}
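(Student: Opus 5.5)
We prove both identities by the same device: divide the flux by the oscillating coefficient, so that each side of the resulting identity is a product of a weakly converging sequence with a strongly converging one (in the sense of Proposition \ref{prop:lala}).

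\textbf{Step 1 (relation for $\sigma$).} For each $\varepsilon$, the definition \eqref{eq:defsigma} gives
\begin{equation*}
\frac{\sigma^\varepsilon}{\mu(c^\varepsilon)} = \partial_x u^\varepsilon - \frac{R(c^\varepsilon)\rho^\varepsilon}{\mu(c^\varepsilon)}\theta^\varepsilon .
\end{equation*}
We pass to the limit term by term.

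The left-hand side is $\langle\nu^\varepsilon,1/\mu\rangle\sigma^\varepsilon$, where $\beta(c,\rho)=1/(\mu_+c+\mu_-(1-c))$ is continuous on $K$. By \eqref{eq:france} it converges weakly in $L^2(0,T,L^2(\T))$ to $\langle\nu,1/\mu\rangle\sigma$.

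On the right, $\partial_x u^\varepsilon\rightharpoonup\partial_x u$ weakly by \eqref{eq:giletmoche}.

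For the last term, $\beta=R(c)\rho/\mu(c)\in C(K)$, so $\langle\nu^\varepsilon,\beta\rangle\rightharpoonup\langle\nu,\beta\rangle$ weakly-$\star$ in $L^\infty$. Moreover $\theta^\varepsilon\to\theta$ strongly in $L^2$ by \eqref{eq:narine}. The product of a weak-$\star$ converging bounded sequence with a strongly converging one converges weakly, here to $\langle\nu,R\rho/\mu\rangle\theta$.

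We obtain
\begin{equation*}
\langle\nu,1/\mu\rangle\sigma=\partial_x u-\langle\nu,R\rho/\mu\rangle\theta .
\end{equation*}
Since $1/\mu\ge 1/\mu_{\max}>0$ on $K$ and $\nu_{t,x}$ is a probability measure, we have $\langle\nu,1/\mu\rangle\ge1/\mu_{\max}$. Dividing by it yields \eqref{eq:fluxeff}.

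\textbf{Step 2 (relation for $\mathfrak f$).} Similarly, $\partial_x\theta^\varepsilon = \frac{1}{\kappa(c^\varepsilon)}\,\kappa(c^\varepsilon)\partial_x\theta^\varepsilon=\langle\nu^\varepsilon,1/\kappa\rangle\,\kappa(c^\varepsilon)\partial_x\theta^\varepsilon$. Here $1/\kappa$ is continuous on $K$ because $\kappa_\pm>0$.

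By \eqref{eq:espagne} the right-hand side converges weakly in $L^2$ to $\langle\nu,1/\kappa\rangle\mathfrak f$. The left-hand side converges weakly to $\partial_x\theta$ by \eqref{eq:gilettresmoche}. By uniqueness of weak limits, $\partial_x\theta=\langle\nu,1/\kappa\rangle\mathfrak f$. Dividing by $\langle\nu,1/\kappa\rangle\ge1/\kappa_{\max}>0$ gives \eqref{eq:fouriereff}.

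\textbf{Main obstacle.} The delicate point is the legitimacy of \eqref{eq:france} and \eqref{eq:espagne}, which multiply two sequences that each converge only weakly. This rests on the compensated compactness built into Proposition \ref{prop:lala}: strong convergence of $\langle\nu^\varepsilon,\beta\rangle$ in $L^2(0,T,H^{-1})$, paired with weak convergence of $\sigma^\varepsilon$ and $\kappa\partial_x\theta^\varepsilon$ in $L^2(0,T,H^1)$. The required $H^1$ bound on $\kappa(c^\varepsilon)\partial_x\theta^\varepsilon$ comes from the $L^2$ bounds on $\partial_x(\kappa\partial_x\theta)$ and $\partial_x\theta$ in Theorem \ref{thm:bounds}.

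We take Proposition \ref{prop:lala} as given, so the corollary reduces to the limit identifications above. Two small points remain:
\begin{itemize}
\item All limits must be taken along the same subsequence, which we ensure by successive extraction.
\item The weak-$\star$ times strong product in Step 1 requires $\langle\nu^\varepsilon,\beta\rangle\to\langle\nu,\beta\rangle$ weakly-$\star$ in $L^\infty$, which follows from the convergence of $\nu^\varepsilon$.
\end{itemize}
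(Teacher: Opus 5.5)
Your proof is correct and takes the same approach as the paper. You write $\partial_x u^\varepsilon$ and $\partial_x\theta^\varepsilon$ in terms of the fluxes divided by the oscillating coefficients, pass to the limit via Proposition~\ref{prop:lala} (together with the strong convergence of $\theta^\varepsilon$ from Proposition~\ref{prop:twin} for the pressure term), and divide by $\langle\nu,1/\mu\rangle$ or $\langle\nu,1/\kappa\rangle$, whose positive lower bounds and the weak-$\star$-times-strong product step you justify explicitly where the paper leaves them implicit.
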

\begin{proof}
Starting from the equality
\begin{equation}
\partial_x u^\varepsilon = \frac{\sigma^\varepsilon}{\mu^\varepsilon} + \frac{R^\varepsilon\rho^\varepsilon}{\mu^\varepsilon}\theta^\varepsilon,
\end{equation}
we get, passing to the limit $\varepsilon\rightarrow 0$ then using Proposition \ref{prop:lala}, 
\begin{equation}
\partial_x u = \langle \nu, 1/\mu\rangle \sigma + \langle \nu, R\rho/\mu\rangle\theta.
\end{equation}
We then obtain \eqref{eq:fluxeff} dividing by $\langle \nu, 1/\mu\rangle$. Similarly, writing
\begin{equation}
\partial_x\theta^\varepsilon = \frac{1}{\kappa^\varepsilon}\kappa^\varepsilon\partial_x\theta^\varepsilon
\end{equation}
then passing to the limit $\varepsilon\rightarrow 0$ and using Proposition \ref{prop:lala},
\begin{equation}
\partial_x\theta = \langle \nu, 1/\kappa\rangle \mathfrak{f},
\end{equation}
therefore \eqref{eq:fouriereff}.
\end{proof}
At this stage, we can pass to the limit in \eqref{eq:cmeso}--\eqref{eq:energ} by using the Young measures. The macroscopic system that we obtained was already found in \cite{AZ01}, in a Lagrangian point of view.
\begin{prop}\label{thm:33}
Let us denote by $\alpha_+ = \langle \nu, c\rangle$, $\alpha_- = \langle \nu, 1-c\rangle$, and (by abusing the notation) $\rho = \langle \nu,\rho\rangle$. Then $(\alpha_\pm,\rho,u,\theta)$ are solutions of the equations
\begin{empheq}[left=\empheqlbrace]{alignat=1}
\partial_t \alpha_+ + \partial_x (u\alpha_+) &= \left\langle\nu, \frac{c}{\mu}\right\rangle\sigma + \left\langle\nu, \frac{Rc\rho}{\mu}\right\rangle\theta\label{eq:cmacrotemptemp},\\
\partial_t \alpha_- + \partial_x (u\alpha_-) &= \left\langle\nu, \frac{1-c}{\mu}\right\rangle\sigma + \left\langle\nu, \frac{R(1-c)\rho}{\mu}\right\rangle\theta\label{eq:1mcmacrotemptemp},\\
\partial_t \langle \nu,c\rho\rangle + \partial_x (\langle \nu,c\rho\rangle u) &= 0
\label{eq:contcmacrotemptemp},\\
\partial_t \langle \nu,(1-c)\rho\rangle + \partial_x (\langle \nu,(1-c)\rho\rangle u) &= 0
\label{eq:cont1mcmacrotemptemp},\\
\partial_t (\rho u) + \partial_x (\rho u^2) &=\partial_x\sigma
\label{eq:momtsmacrotemptemp},\\
\partial_t (\langle\nu, \cv \rho\rangle\theta) + \partial_x(\langle \nu, \cv\rho\rangle \theta u) &= \sigma\partial_x u +\partial_x \mathfrak{f}
\label{eq:energmacrotempemp}.
\end{empheq}
\end{prop}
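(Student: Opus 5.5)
The plan is to write each mesoscopic equation in a form that is a sum of products (weakly convergent factor)$\times$(strongly convergent factor), and then pass to the limit $\varepsilon\to0$ in the sense of distributions, using Propositions \ref{prop:twin} and \ref{prop:lala}.

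For \eqref{eq:contcmacrotemptemp} and \eqref{eq:cont1mcmacrotemptemp}, I start from \eqref{eq:cprems}, namely $\partial_t(c^\varepsilon\rho^\varepsilon)+\partial_x(c^\varepsilon\rho^\varepsilon u^\varepsilon)=0$, and its analogue with $1-c^\varepsilon$. The factor $c^\varepsilon\rho^\varepsilon=\langle\nu^\varepsilon,c\rho\rangle$ converges to $\langle\nu,c\rho\rangle$ weak-$\star$ in $L^\infty$, and $u^\varepsilon\to u$ strongly in $L^2$ by \eqref{eq:papille}. The product therefore converges weakly, which gives both equations in $\mathcal D'$.

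For the volume fraction I rewrite \eqref{eq:cmeso} in conservative form:
\[
\partial_t c^\varepsilon+\partial_x(c^\varepsilon u^\varepsilon)=c^\varepsilon\partial_x u^\varepsilon=\frac{c^\varepsilon}{\mu^\varepsilon}\sigma^\varepsilon+\frac{R^\varepsilon c^\varepsilon\rho^\varepsilon}{\mu^\varepsilon}\theta^\varepsilon .
\]
This uses \eqref{eq:defsigma}, and it is here that I expect the only genuine difficulty. $\partial_x u^\varepsilon$ converges only weakly, and it multiplies the weakly converging $c^\varepsilon$, so the product cannot be passed to the limit directly. The decomposition through $\sigma^\varepsilon$ removes this problem. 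The first term is $\langle\nu^\varepsilon,c/\mu\rangle\sigma^\varepsilon$, which converges weakly to $\langle\nu,c/\mu\rangle\sigma$ by \eqref{eq:france}. In the second term, $\theta^\varepsilon\to\theta$ strongly by \eqref{eq:narine} and $\langle\nu^\varepsilon,Rc\rho/\mu\rangle$ converges weak-$\star$, so the term converges to $\langle\nu,Rc\rho/\mu\rangle\theta$. This gives \eqref{eq:cmacrotemptemp}. Equation \eqref{eq:1mcmacrotemptemp} follows in the same way with $1-c$ in place of $c$, or by subtracting \eqref{eq:cmacrotemptemp} from the limit of $\partial_t 1+\partial_x u=\partial_x u$ combined with \eqref{eq:fluxeff}.

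For the momentum equation \eqref{eq:momtsmacrotemptemp}:
\begin{itemize}
\item $\rho^\varepsilon u^\varepsilon\rightharpoonup\rho u$, since $\rho^\varepsilon$ converges weak-$\star$ and $u^\varepsilon$ strongly.
\item $\rho^\varepsilon (u^\varepsilon)^2\rightharpoonup\rho u^2$, because $u^\varepsilon$ is bounded in $L^\infty$ by \eqref{eq:def14}, so $(u^\varepsilon)^2\to u^2$ strongly in $L^2$.
\item $\partial_x\sigma^\varepsilon\rightharpoonup\partial_x\sigma$ by \eqref{eq:chine}.
\end{itemize}

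For the energy equation \eqref{eq:energmacrotempemp} I use the temperature form \eqref{eq:thetameso}, in which the right-hand side is $\sigma^\varepsilon\partial_x u^\varepsilon+\partial_x(\kappa^\varepsilon\partial_x\theta^\varepsilon)$. The left-hand side, $\langle\nu^\varepsilon,\cv\rho\rangle\theta^\varepsilon$ and $\langle\nu^\varepsilon,\cv\rho\rangle\theta^\varepsilon u^\varepsilon$, converges by the same weak-times-strong argument, using the $L^\infty$ bounds on $\theta^\varepsilon$ and $u^\varepsilon$. The diffusion term converges to $\partial_x\mathfrak f$ by \eqref{eq:inde}.

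The delicate term is $\sigma^\varepsilon\partial_x u^\varepsilon$, a product of two weakly convergent sequences. I would first try to obtain strong $L^2$ convergence of $\partial_x u^\varepsilon$; this is false in general. Instead I write
\[
\sigma^\varepsilon\partial_x u^\varepsilon=\partial_x(\sigma^\varepsilon u^\varepsilon)-u^\varepsilon\partial_x\sigma^\varepsilon .
\]
The sequence $\sigma^\varepsilon$ is bounded in $L^2(H^1)$. I would need it to converge strongly in $L^2$, but the Remark says this is not available. The product $u^\varepsilon\partial_x\sigma^\varepsilon$ does converge, as strong $L^2$ times weak $L^2$, to $u\partial_x\sigma$.

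For $\sigma^\varepsilon u^\varepsilon$, I exchange the roles of the two factors. Since $u^\varepsilon$ is bounded in $H^1(0,T;H^1)$, it converges strongly in $L^2(0,T;L^\infty(\T))$ by Aubin--Lions. Hence $\sigma^\varepsilon u^\varepsilon\rightharpoonup\sigma u$ weakly in $L^2$, with $\sigma^\varepsilon$ converging weakly.

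Altogether $\sigma^\varepsilon\partial_x u^\varepsilon\to\partial_x(\sigma u)-u\partial_x\sigma=\sigma\partial_x u$ in $\mathcal D'$, which completes \eqref{eq:energmacrotempemp}. Initial conditions are recovered through the time continuity of $\nu$ and the weak formulation.
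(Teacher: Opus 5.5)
Your proposal is correct and follows the paper's proof essentially verbatim. It uses the same rewriting $c^\varepsilon\partial_x u^\varepsilon=\frac{c^\varepsilon}{\mu^\varepsilon}\sigma^\varepsilon+\frac{R^\varepsilon c^\varepsilon\rho^\varepsilon}{\mu^\varepsilon}\theta^\varepsilon$ combined with \eqref{eq:france}, weak-times-strong products for the transport and momentum terms, and the identity $\sigma^\varepsilon\partial_x u^\varepsilon=\partial_x(\sigma^\varepsilon u^\varepsilon)-u^\varepsilon\partial_x\sigma^\varepsilon$ for the energy equation. The upgrade to convergence of $u^\varepsilon$ in $L^2(0,T;L^\infty(\T))$ is superfluous: the strong convergence \eqref{eq:papille} paired with the weak $L^2$ convergence of $\sigma^\varepsilon$ already gives $\sigma^\varepsilon u^\varepsilon\rightharpoonup\sigma u$ in $L^1$, so strong convergence of $\sigma^\varepsilon$ was never needed.
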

\begin{proof}
\eqref{eq:contcmacrotemptemp}--\eqref{eq:momtsmacrotemptemp} using \eqref{eq:contmeso}, \eqref{eq:cprems} and \eqref{eq:momtsmeso}.
Note that
\begin{align}
0 &= \partial_t c^\varepsilon + u^\varepsilon\partial_x c^\varepsilon \\&= \partial_t c^\varepsilon + \partial_x(c^\varepsilon u^\varepsilon) - c^\varepsilon\partial_x u^\varepsilon
\\&=\partial_t c^\varepsilon + \partial_x(c^\varepsilon u^\varepsilon) - \frac{c^\varepsilon}{\mu^\varepsilon}\sigma^\varepsilon - \frac{c^\varepsilon R^\varepsilon\rho^\varepsilon}{\mu^\varepsilon}\theta^\varepsilon.
\end{align}
Hence \eqref{eq:cmacrotemptemp} passing to the limit by using Proposition \ref{prop:twin} and Proposition \ref{prop:lala}.
Moreover,
\begin{equation}
\sigma^\varepsilon\partial_x u^\varepsilon = \partial_x(\sigma^\varepsilon u^\varepsilon) - u^\varepsilon\partial_x\sigma^\varepsilon \tendf{\varepsilon}{0}\partial_x(\sigma u) - u\partial_x\sigma\quad \text{in } L^1(0,T,L^1(\T)),
\end{equation}
hence \eqref{eq:energmacrotempemp}.
\end{proof}
Let us choose initial conditions of the form
\begin{equation}
\rho_0^\varepsilon = c_0^\varepsilon\rho_{0,+}^\varepsilon + (1-c_0^\varepsilon)\rho_{0,-}^\varepsilon,
\end{equation}
with $(\rho_{0,\pm}^\varepsilon)_{\varepsilon>0}\subset L^\infty(\T)$, and assume that there exists some $\overline{\rho_0}>\underline{\rho_0}>0$ such that
\begin{equation}
\text{for all }\varepsilon>0,\quad \underline{\rho_0} \leq \rho_{0,\pm}^\varepsilon \leq \overline{\rho_0}.
\end{equation} 
Assume moreover that there exists some $\rho_{0,\pm}\in L^\infty(\T)$ such that
\begin{equation}
\rho_{0,\pm}^\varepsilon \tend{\varepsilon}{0}\rho_{0,\pm}\quad \text{in }L^2(\T).
\end{equation}
Observe that, as $c_0^\varepsilon \in \{0,1\}$, we get the existence of some $\alpha_{0,\pm}$ such that
\begin{equation}
c_0^\varepsilon\tend{\varepsilon}{0} \alpha_{0,+}\quad\text{in }L^\infty(\T)-\star,\quad 1-c_0^\varepsilon\tend{\varepsilon}{0}\alpha_{0,-}\quad\text{in }L^\infty(\T)-\star.
\end{equation}
Then, for all $\beta\in C(K)$,
\begin{align}
\beta(c_0^\varepsilon,\rho_0^\varepsilon) = c_0^\varepsilon\beta(1,\rho_{0,+}^\varepsilon) + (1- c_0^\varepsilon)\beta(0,\rho_{0,-}^\varepsilon)\tend{\varepsilon}{0} \alpha_{0,+}\beta(1,\rho_{0,+}) + \alpha_{0,-}\beta(0,\rho_{0,-}) \quad \text{in }L^\infty(\T)-\star.
\end{align}
Defining the Young measure $\nu_0^\varepsilon$ for all $\varepsilon>0$ by
\begin{equation}
\text{for all }\beta\in C(K),\quad \langle \nu_0^\varepsilon,\beta\rangle := c_0^\varepsilon\beta(1,\rho_{0,+}^\varepsilon) + (1-c_0^\varepsilon)\beta(0,\rho_{0,-}^\varepsilon), 
\end{equation}
we then deduce that
\begin{equation}
\nu_0^\varepsilon \rightarrow \alpha_{0,+}\delta_{1,\rho_{0,+}} + \alpha_{0,-}\delta_{0,\rho_{0,-}} \quad \text{in } {\cal M}(K\times\T).
\end{equation}
In particular, by uniqueness of the limit,
\begin{equation}
\text{for almost all }x\in\T,\quad\nu_{0,x} = \alpha_{0,+}(x)\delta_{1,\rho_{0,+}(x)} + \alpha_{0,-}(x)\delta_{0,\rho_{0,-}(x)}.
\end{equation}
\subsection{Construction of a candidate Young measure \texorpdfstring{$\widetilde{\nu}$}{nu tilde}}
The key to obtain the macroscopic system is to show that this structure of the initial measure as some convex combination of two Dirac measures is propagated during the time. Proposition \ref{thm:33} then allows to prove the following
\begin{prop}
Let us assume that there exists some $\alpha_\pm \in L^\infty([0,T]\times\T)$ and $\rho_\pm\in L^\infty([0,T]\times\T)$ such that $\alpha_+ + \alpha_- = 1$ and
\begin{equation}\label{eq:ansatz}
\text{for almost all }(t,x)\in [0,T]\times \T,\quad \nu_{t,x} = \alpha_+(t,x)\delta_{1,\rho_+(t,x)} + \alpha_-(t,x)\delta_{0,\rho_-(t,x)}.
\end{equation} 
Then $(\alpha_\pm,\rho_\pm,u,\theta)$ is a solution to \eqref{eq:cmacrotemp}--\eqref{eq:energmacrotemp}.
\end{prop}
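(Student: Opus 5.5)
Under the ansatz \eqref{eq:ansatz}, every bracket $\langle\nu,\beta\rangle$ appearing in Proposition \ref{thm:33} and in the Corollary giving \eqref{eq:fluxeff}--\eqref{eq:fouriereff} becomes explicit:
\begin{equation*}
\langle\nu,\beta\rangle=\alpha_+\beta(1,\rho_+)+\alpha_-\beta(0,\rho_-).
\end{equation*}
The plan is to substitute these expressions into the averaged system and check, equation by equation, that it reduces to \eqref{eq:cmacrotemp}--\eqref{eq:energmacrotemp}.

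First, the constitutive quantities. Since $\mu(1)=\mu_+$ and $\mu(0)=\mu_-$, we get $\langle\nu,1/\mu\rangle=\alpha_+/\mu_++\alpha_-/\mu_-=1/\mu_{eff}$. Likewise $\langle\nu,1/\kappa\rangle=1/\kappa_{eff}$ and $\langle\nu,R\rho/\mu\rangle=\alpha_+R_+\rho_+/\mu_++\alpha_-R_-\rho_-/\mu_-$.
\begin{itemize}
\item Plugging these into \eqref{eq:fluxeff} gives $\sigma=\mu_{eff}\partial_x u-p_{eff}$.
\item Plugging them into \eqref{eq:fouriereff} gives $\mathfrak f=\kappa_{eff}\partial_x\theta$.
\item The brackets $\langle\nu,c\rho\rangle$ and $\langle\nu,(1-c)\rho\rangle$ reduce to $\alpha_+\rho_+$ and $\alpha_-\rho_-$. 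Equations \eqref{eq:contcmacrotemptemp}--\eqref{eq:cont1mcmacrotemptemp} therefore become \eqref{eq:contmacrotemp} and its $-$ counterpart.
\item The total density is $\langle\nu,\rho\rangle=\alpha_+\rho_++\alpha_-\rho_-$, so \eqref{eq:momtsmacrotemptemp} is \eqref{eq:momtsmacrotemp}.
\item We have $\langle\nu,\cv\rho\rangle=\alpha_+\rho_+\cv_++\alpha_-\rho_-\cv_-=\rho\,\cv_{eff}$. With $\mathfrak f=\kappa_{eff}\partial_x\theta$, this turns \eqref{eq:energmacrotempemp} into \eqref{eq:energmacrotemp}.
\end{itemize}

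The main step is the volume-fraction equation. Under the ansatz, \eqref{eq:cmacrotemptemp} reads
\begin{equation*}
\partial_t\alpha_++\partial_x(u\alpha_+)=\frac{\alpha_+}{\mu_+}\sigma+\frac{\alpha_+R_+\rho_+}{\mu_+}\theta .
\end{equation*}
The goal is to obtain the non-conservative form \eqref{eq:cmacrotemp}, so the term $\alpha_+\partial_x u$ has to be removed. Write $\partial_x(u\alpha_+)=u\partial_x\alpha_++\alpha_+\partial_x u$. Then eliminate $\partial_x u$ using the relation $\partial_x u=\langle\nu,1/\mu\rangle\sigma+\langle\nu,R\rho/\mu\rangle\theta$ obtained in the proof of the Corollary. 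This yields
\begin{equation*}
\partial_t\alpha_++u\partial_x\alpha_+=\alpha_+\Big[\Big(\tfrac1{\mu_+}-\tfrac{\alpha_+}{\mu_+}-\tfrac{\alpha_-}{\mu_-}\Big)\sigma+\Big(\tfrac{R_+\rho_+}{\mu_+}-\tfrac{\alpha_+R_+\rho_+}{\mu_+}-\tfrac{\alpha_-R_-\rho_-}{\mu_-}\Big)\theta\Big].
\end{equation*}
Using $\alpha_+=1-\alpha_-$, the bracket simplifies to $\alpha_-\big[(\tfrac1{\mu_+}-\tfrac1{\mu_-})\sigma+(\tfrac{R_+\rho_+}{\mu_+}-\tfrac{R_-\rho_-}{\mu_-})\theta\big]$.

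Next, substitute $\sigma=\mu_{eff}\partial_xu-p_{eff}$ into this expression. The aim is to show that the coefficients collapse to
\begin{equation*}
\frac{\alpha_+\alpha_-}{\mu_{eff}}\,(R_+\rho_+-R_-\rho_-)\theta,
\end{equation*}
up to the pressure-relaxation structure. I expect this algebra to be the delicate point. If the $\partial_x u$ contribution does not cancel identically, I will instead rewrite the right-hand side in terms of the phase stresses $\sigma_\pm=\mu_\pm\partial_x u_\pm-R_\pm\rho_\pm\theta$, which should both equal $\sigma$ at the limit. In that form the difference of the two phase pressures appears, divided by an effective viscosity, and this is the relaxation term in \eqref{eq:cmacrotemp}.

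Finally, the initial conditions follow directly from the identification of $\nu_{0,x}$ made just before the statement.
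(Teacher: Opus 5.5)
\textbf{Verdict.} There is a genuine gap at the volume-fraction step, and the target you are aiming for there cannot be reached as written.

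\textbf{What matches the paper.} Your route is the intended one. The paper gives no separate argument; it substitutes the ansatz into Proposition~\ref{thm:33} and the Corollary, and records the outcome for the volume fractions as \eqref{eq:souris}. Your reductions of \eqref{eq:fluxeff} and \eqref{eq:fouriereff}, of the two partial mass equations, and of the momentum and temperature equations are correct. Your intermediate identity for $\alpha_+$ is also correct.

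\textbf{The gap.} It is the step you yourself single out. You never carry out the final substitution, and it does not produce the right-hand side you are aiming at. Eliminating $\sigma$ via $\sigma=\mu_{eff}\partial_x u-p_{eff}$ gives exactly
\begin{align*}
\partial_t\alpha_+ + u\partial_x\alpha_+
&=\alpha_+\alpha_-\Big[\Big(\frac{1}{\mu_+}-\frac{1}{\mu_-}\Big)\sigma+\Big(\frac{R_+\rho_+}{\mu_+}-\frac{R_-\rho_-}{\mu_-}\Big)\theta\Big]\\
&=\frac{\alpha_+\alpha_-}{\alpha_+\mu_-+\alpha_-\mu_+}\Big((\mu_--\mu_+)\partial_x u+(R_+\rho_+-R_-\rho_-)\theta\Big).
\end{align*}
Two things differ from \eqref{eq:cmacrotemp}:
\begin{itemize}
\item The $\partial_x u$ term does not cancel.
\item The prefactor is $1/(\alpha_+\mu_-+\alpha_-\mu_+)$, not $1/\mu_{eff}=(\alpha_+\mu_-+\alpha_-\mu_+)/(\mu_+\mu_-)$.
\end{itemize}
So the result agrees with \eqref{eq:cmacrotemp} identically only when $\mu_+=\mu_-$. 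For instance, take a point where $R_+\rho_+=R_-\rho_-$, $\mu_+\neq\mu_-$ and $\partial_x u\neq 0$. There \eqref{eq:cmacrotemp} gives $\partial_t\alpha_++u\partial_x\alpha_+=0$, while the limit system forces the nonzero value $\alpha_+\alpha_-(\mu_--\mu_+)\partial_x u/(\alpha_+\mu_-+\alpha_-\mu_+)$.

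\textbf{Your fallback does not repair this.} The limit system has a single velocity $u$, so there are no phase velocities $u_\pm$. The only meaning one can give to ``$\sigma_\pm=\sigma$'' is to define phase strain rates $\partial_x u_\pm:=(\sigma+R_\pm\rho_\pm\theta)/\mu_\pm$. Then $\alpha_+\alpha_-(\partial_x u_+-\partial_x u_-)$ is just a rewriting of the first line of the display. It is not $\alpha_+\alpha_-(R_+\rho_+-R_-\rho_-)\theta/\mu_{eff}$.

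\textbf{What you should conclude instead.} The substitution establishes the volume-fraction law in the $\sigma$-form \eqref{eq:souris}, or equivalently the relaxation law in the second line of the display. This is also what the paper uses afterwards, in Proposition~\ref{prop:existence} and in the construction of $\widetilde{\nu}$. State your conclusion for $\alpha_\pm$ in that form rather than trying to force the coefficients to collapse. The displayed \eqref{eq:cmacrotemp} is only correct for $\mu_+=\mu_-$: it misses the $(\mu_--\mu_+)\partial_x u$ term and carries the wrong effective viscosity.
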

It remains to be shown that the Young measure $\nu$ is indeed of the form \eqref{eq:ansatz}.
Let us construct some candidate Young measure $\widetilde{\nu}$ of the form \eqref{eq:ansatz}. Injecting \eqref{eq:ansatz} in \eqref{eq:cmacrotemptemp} and \eqref{eq:1mcmacrotemptemp} we get

\begin{equation}\label{eq:souris}
\partial_t \alpha_\pm + u\partial_x\alpha_\pm = \frac{\alpha_\pm}{\mu_\pm}\left(\sigma - (\mu_\pm\partial_x u - R_\pm\rho_\pm\theta)\right)
\end{equation}
\begin{equation}\label{eq:chat}
\partial_t(\alpha_\pm\rho_\pm) + \partial_x(\alpha_\pm\rho_\pm u) = 0.
\end{equation}
And the converse is straightforward:
\begin{prop}\label{prop:etonconclut}
Assume that $\nu$ satisfies \eqref{eq:ansatz}, where $(\alpha_\pm,\rho_\pm)$ is a solution of \eqref{eq:souris}--\eqref{eq:chat}. Then $(\alpha_\pm,\rho_\pm,u,\theta)$ is a solution to \eqref{eq:cmacrotemp}--\eqref{eq:energmacrotemp}.
\end{prop}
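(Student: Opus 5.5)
We plan to show that, under the ansatz \eqref{eq:ansatz} with $(\alpha_\pm,\rho_\pm)$ solving \eqref{eq:souris}--\eqref{eq:chat}, every duality bracket $\langle\nu,\cdot\rangle$ appearing in Proposition \ref{thm:33} and in \eqref{eq:fluxeff}--\eqref{eq:fouriereff} becomes an explicit two-phase average. The approach is to substitute these averages into the system of Proposition \ref{thm:33} and identify each equation with one of \eqref{eq:cmacrotemp}--\eqref{eq:energmacrotemp}. Under the ansatz we have, for any $\beta\in C(K)$,
\begin{equation*}
\langle\nu,\beta\rangle=\alpha_+\beta(1,\rho_+)+\alpha_-\beta(0,\rho_-).
\end{equation*}
Hence $\langle\nu,1/\mu\rangle=1/\mu_{eff}$, $\langle\nu,1/\kappa\rangle=1/\kappa_{eff}$, $\langle\nu,\rho\rangle=\alpha_+\rho_++\alpha_-\rho_-$ and $\langle\nu,\cv\rho\rangle=\rho\,\cv_{eff}$. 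Also $\langle\nu,R\rho/\mu\rangle=\alpha_+R_+\rho_+/\mu_++\alpha_-R_-\rho_-/\mu_-$, so that $p_{eff}\theta^{-1}\mu_{eff}^{-1}$ equals exactly this bracket.

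The first step is the constitutive laws. By \eqref{eq:fluxeff}, $\sigma=\mu_{eff}\partial_x u-p_{eff}$, and by \eqref{eq:fouriereff}, $\mathfrak f=\kappa_{eff}\partial_x\theta$. The second step uses these to read off the equations. Equation \eqref{eq:chat} is \eqref{eq:contmacrotemp}. Equation \eqref{eq:momtsmacrotemptemp} is \eqref{eq:momtsmacrotemp} with $\rho=\alpha_+\rho_++\alpha_-\rho_-$. Equation \eqref{eq:energmacrotempemp}, with $\langle\nu,\cv\rho\rangle=\rho\cv_{eff}$ and $\partial_x\mathfrak f=\partial_x(\kappa_{eff}\partial_x\theta)$, gives \eqref{eq:energmacrotemp}.

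The only genuinely computational step is \eqref{eq:cmacrotemp}. From \eqref{eq:souris} with the $+$ sign we have $\partial_t\alpha_++u\partial_x\alpha_+=\frac{\alpha_+}{\mu_+}(\sigma-\mu_+\partial_xu+R_+\rho_+\theta)$. We eliminate $\partial_x u$ using $\partial_x u=\sigma/\mu_{eff}+\langle\nu,R\rho/\mu\rangle\theta$, with $1/\mu_{eff}=\alpha_+/\mu_++\alpha_-/\mu_-$ and $\alpha_++\alpha_-=1$. This gives $\sigma-\mu_+\partial_x u=\sigma\alpha_-(1-\mu_+/\mu_-)-\mu_+\theta(\alpha_+R_+\rho_+/\mu_++\alpha_-R_-\rho_-/\mu_-)$. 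Adding $R_+\rho_+\theta$ and using $1-\alpha_+=\alpha_-$, the bracket becomes
\begin{equation*}
\alpha_-\Bigl[\sigma\Bigl(1-\frac{\mu_+}{\mu_-}\Bigr)+\theta\Bigl(R_+\rho_+-\frac{\mu_+}{\mu_-}R_-\rho_-\Bigr)\Bigr].
\end{equation*}
We then re-express $\sigma$ through $\sigma=\mu_{eff}\partial_xu-p_{eff}$. The intended outcome is that the combination collapses to $\frac{\alpha_+\alpha_-}{\mu_{eff}}(R_+\rho_+-R_-\rho_-)\theta$, possibly together with a term proportional to $\sigma$.

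The main obstacle is this algebraic reduction, in particular checking that no stray $\sigma$-dependent term survives. If one does survive, I expect the right normalization to come from matching the pressure-balance form, that is, writing both phases' stresses $\sigma_\pm=\mu_\pm\partial_t\alpha_\pm/\alpha_\pm+\dots$ and using that $\sigma$ equals each phase stress. I would first verify the identity directly in the case $\mu_+=\mu_-$, where the $\sigma$-coefficient vanishes trivially. I would then do the general case by expanding everything over the common denominator $\mu_+\mu_-$. The $\alpha_-$ equation follows either symmetrically or from $\alpha_++\alpha_-=1$, and the initial conditions transfer directly from $\nu_0$.
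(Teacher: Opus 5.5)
\textbf{There is a genuine gap in the $\alpha_+$ equation.} Your route is the one the paper intends: it gives no argument beyond calling the statement straightforward, and the implicit proof is to insert the ansatz into Proposition~\ref{thm:33} and \eqref{eq:fluxeff}--\eqref{eq:fouriereff}. Your bracket identities are correct, and so is your treatment of the mass, momentum and energy equations.

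The gap is exactly the step you flag, and it cannot be closed the way you hope. Complete your own computation and then insert $\sigma=\mu_{eff}\partial_x u-p_{eff}$; the cross terms coming from $p_{eff}$ cancel, and you get
\begin{equation*}
\partial_t\alpha_++u\partial_x\alpha_+=\alpha_+\alpha_-\Bigl[\Bigl(\frac{1}{\mu_+}-\frac{1}{\mu_-}\Bigr)\sigma+\Bigl(\frac{R_+\rho_+}{\mu_+}-\frac{R_-\rho_-}{\mu_-}\Bigr)\theta\Bigr]=\frac{\alpha_+\alpha_-}{\alpha_+\mu_-+\alpha_-\mu_+}\Bigl[(\mu_--\mu_+)\partial_x u+(R_+\rho_+-R_-\rho_-)\theta\Bigr].
\end{equation*}
Note that $\frac{\alpha_+\alpha_-}{\alpha_+\mu_-+\alpha_-\mu_+}=\frac{\alpha_+\alpha_-\mu_{eff}}{\mu_+\mu_-}$. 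So this coincides with \eqref{eq:cmacrotemp} only when $\mu_+=\mu_-$; then $\mu_{eff}=\mu_\pm$ and the $\sigma$-term vanishes. For $\mu_+\neq\mu_-$ two things go wrong:
\begin{itemize}
\item the prefactor differs from the printed $\alpha_+\alpha_-/\mu_{eff}$;
\item a $\partial_x u$ term (equivalently a $\sigma$ term) survives, and since $\partial_x u$ is not a function of $(\alpha_\pm,\rho_\pm,\theta)$, nothing can cancel it.
\end{itemize}
For instance, wherever $R_+\rho_+=R_-\rho_-$, \eqref{eq:cmacrotemp} would give $\partial_t\alpha_++u\partial_x\alpha_+=0$. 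The ansatz instead forces $\partial_t\alpha_++u\partial_x\alpha_+=\frac{\alpha_+\alpha_-(\mu_--\mu_+)}{\alpha_+\mu_-+\alpha_-\mu_+}\partial_x u$, which is nonzero in general.

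Your pressure-balance fallback does not rescue this. With phase divergences $\partial_x u_\pm=(\sigma+R_\pm\rho_\pm\theta)/\mu_\pm$, you have $\partial_x u=\alpha_+\partial_x u_++\alpha_-\partial_x u_-$. Then \eqref{eq:souris} reads $\partial_t\alpha_++u\partial_x\alpha_+=\alpha_+\alpha_-(\partial_x u_+-\partial_x u_-)$. This is exactly the displayed formula (the usual Baer--Nunziato relaxation with a viscosity-contrast term), not \eqref{eq:cmacrotemp}.

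So the obstruction lies in the printed form of \eqref{eq:cmacrotemp}, not in your method. What your substitution actually proves is the proposition with \eqref{eq:cmacrotemp} replaced by the displayed equation, which reduces to the printed one when the viscosities are equal. You should state and prove that version explicitly, rather than leaving an ``intended outcome'' that the algebra does not deliver. The rest goes through as you describe: the $\alpha_-$ equation from $\alpha_++\alpha_-=1$, and the initial data from $\nu_0$.
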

The following proposition gives the existence of such a solution:
\begin{prop}\label{prop:existence}
Let $u\in L^1(0,T,W^{1,\infty}(\T))$, $\sigma\in L^1(0,T,L^\infty(\T))$ and $\theta\in L^\infty([0,T]\times \T)$. Then the system of equations \eqref{eq:souris}, \eqref{eq:chat} with initial conditions $\alpha_{0,\pm},\rho_{0,\pm}$ has at least one solution in $L^\infty([0,T]\times\T)^4$.
\end{prop}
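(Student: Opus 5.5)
We prove Proposition \ref{prop:existence} by integrating along the characteristics of the given velocity $u$. Since $u\in L^1(0,T,W^{1,\infty}(\T))$, the Cauchy–Lipschitz theory (in its Carathéodory form, with time integrability) gives a well-defined flow $X(t,s,x)$ solving $\partial_t X = u(t,X)$, $X(s,s,x)=x$. Each $X(t,s,\cdot)$ is a bi-Lipschitz homeomorphism of $\T$, with Jacobian
\begin{equation*}
J(t,x)=\exp\Bigl(\int_0^t \partial_x u(\tau,X(\tau,0,x))\,d\tau\Bigr)\in\Bigl[e^{-\int_0^T\Vert\partial_x u\Vert_\infty},\,e^{\int_0^T\Vert\partial_x u\Vert_\infty}\Bigr].
\end{equation*}

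The first step is to reduce the system to ordinary differential equations along characteristics. Write $m_\pm=\alpha_\pm\rho_\pm$. By \eqref{eq:chat}, $m_\pm$ is transported as a density, so
\begin{equation*}
m_\pm(t,X(t,0,x))=\frac{\alpha_{0,\pm}(x)\rho_{0,\pm}(x)}{J(t,x)}.
\end{equation*}
This quantity is explicit and lies in $L^\infty$. Equation \eqref{eq:souris} reads
\begin{equation*}
D_t\alpha_\pm = \frac{\alpha_\pm}{\mu_\pm}\sigma - \alpha_\pm\partial_x u + \frac{R_\pm}{\mu_\pm}m_\pm\theta,
\end{equation*}
which is linear in $\alpha_\pm$ once $m_\pm$ is known. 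Along each characteristic it becomes the scalar linear ODE
\begin{equation*}
\dot a = a\Bigl(\frac{\sigma}{\mu_\pm}-\partial_x u\Bigr)+\frac{R_\pm}{\mu_\pm}m_\pm\theta,
\end{equation*}
with coefficients in $L^1(0,T)$ for almost every starting point $x$, because $\sigma\in L^1L^\infty$, $u\in L^1W^{1,\infty}$ and $\theta,m_\pm\in L^\infty$. Duhamel's formula then gives an explicit solution $\alpha_\pm(t,X(t,0,x))$. Its $L^\infty$ bound follows from Grönwall's lemma, using $\exp\bigl(\int_0^T(\Vert\sigma\Vert_\infty/\mu_{\min}+\Vert\partial_x u\Vert_\infty)\bigr)$, together with $T\,R_{\max}\Vert m_\pm\Vert_\infty\Vert\theta\Vert_\infty/\mu_{\min}$ for the source term.

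Next we recover $\rho_\pm$ and check nondegeneracy. Define $\rho_\pm=m_\pm/\alpha_\pm$ wherever $\alpha_\pm>0$, and set $\rho_\pm$ to any bounded value, say $\underline{\rho_0}$, wherever $\alpha_\pm=0$. In the latter case $m_\pm=0$ as well, because $m_\pm$ vanishes exactly where $\alpha_{0,\pm}$ does along the characteristic. To bound $\rho_\pm$ in $L^\infty$ we need a lower bound for $\alpha_\pm$ relative to $m_\pm$. Since $\theta>0$ and $m_\pm\geq0$, the source term is nonnegative, so $\alpha_\pm\geq\alpha_{0,\pm}\exp\bigl(-\int(\Vert\sigma\Vert_\infty/\mu_\pm+\Vert\partial_x u\Vert_\infty)\bigr)$, and $m_\pm\leq \alpha_{0,\pm}\overline{\rho_0}/J$. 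The ratio is therefore bounded independently of $\alpha_{0,\pm}$. This is the step I regard as the main obstacle: controlling $\rho_\pm$ where the volume fraction is small. The positivity of the source term is what makes it work. If $\theta$ were merely bounded, not nonnegative, I would need a different argument; here $\theta\geq\underline{\theta}>0$ from Theorem \ref{thm:bounds}.

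The final step is to check that the functions built along characteristics are distributional solutions. Composing with the bi-Lipschitz flow preserves measurability and $L^\infty$ bounds. A change of variables $y=X(t,0,x)$ in the weak formulation, with test functions $\varphi(t,X(t,0,x))$ that are absolutely continuous in $t$, turns the equations into the ODEs above integrated in time, so they hold in $\mathcal{D}'$. The initial data are attained since $X(0,0,x)=x$.

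One point remains. The constraint $\alpha_++\alpha_-=1$ is not asked for in Proposition \ref{prop:existence}, and I would not try to prove it here. It is recovered afterwards from the identity $\sigma=\mu_{eff}\partial_x u-p_{eff}$, which the limit satisfies by \eqref{eq:fluxeff}. Summing \eqref{eq:souris} over $\pm$ with this identity gives $D_t(\alpha_++\alpha_--1)=(\alpha_++\alpha_--1)\times(\text{bounded})$, and hence $\alpha_++\alpha_-=1$ by uniqueness for the linear ODE. I would note this as a remark.
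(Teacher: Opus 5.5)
Your argument is correct under the sign condition $\theta\ge 0$ that you flag, together with $\alpha_{0,\pm}\ge 0$ and $\underline{\rho_0}\le\rho_{0,\pm}\le\overline{\rho_0}$. All three are available where the proposition is used. Your decomposition differs from the paper's.

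The paper decouples the phase density first. It solves a transport equation for $\rho_\pm$ alone, then obtains $\alpha_\pm$ from the \emph{homogeneous} linear equation $D_t\alpha_\pm=\alpha_\pm\bigl(\sigma/\mu_\pm+R_\pm\rho_\pm\theta/\mu_\pm-\partial_x u\bigr)$. So $\alpha_\pm$ is $\alpha_{0,\pm}$ times a positive factor, and \eqref{eq:chat} is checked afterwards by the product rule. You exploit the conservative structure instead: $m_\pm=\alpha_\pm\rho_\pm$ is explicit through the Jacobian, $\alpha_\pm$ solves an inhomogeneous linear equation with nonnegative source $R_\pm m_\pm\theta/\mu_\pm$, and $\rho_\pm$ is a quotient.

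What each order buys:
\begin{itemize}
\item The paper's order gives a $\rho_\pm$ defined everywhere, with no division and no degenerate set.
\item Yours is linear from the outset and gives \eqref{eq:chat} by construction. The price is the ratio estimate and an arbitrary value of $\rho_\pm$ where $\alpha_{0,\pm}=0$; this is harmless, since it carries zero weight on a flow-invariant set.
\end{itemize}

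Your order also sidesteps a slip in the paper's proof. The equation displayed there, $D_t\rho_\pm=-\sigma/\mu_\pm-R_\pm\rho_\pm\theta/\mu_\pm$, is incompatible with \eqref{eq:chat}: the product rule leaves the residual $\alpha_\pm(\rho_\pm-1)(\sigma+R_\pm\rho_\pm\theta)/\mu_\pm$. The correct decoupled equation, which your quotient does satisfy, is the Riccati equation $D_t\rho_\pm=-\rho_\pm(\sigma+R_\pm\rho_\pm\theta)/\mu_\pm$. It is linear in $1/\rho_\pm$, namely $D_t(1/\rho_\pm)=(\sigma/\mu_\pm)(1/\rho_\pm)+R_\pm\theta/\mu_\pm$, and the upper bound on $\rho_\pm$ again comes from $\theta\ge 0$.

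So your extra hypothesis is needed on either route, and the statement fails without it. For $u=0$, $\sigma=0$, $\theta\equiv-1$ one gets $\alpha_\pm\rho_\pm=\alpha_{0,\pm}\rho_{0,\pm}$ and $\alpha_\pm=\alpha_{0,\pm}\bigl(1-R_\pm\rho_{0,\pm}t/\mu_\pm\bigr)$. Hence $\rho_\pm$ blows up at $t=\mu_\pm/(R_\pm\rho_{0,\pm})$.

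Your closing remark on $\alpha_++\alpha_-=1$ lies outside the proposition, but it is circular as stated. \eqref{eq:fluxeff} concerns the moments of $\nu$. You can insert it into the sum of \eqref{eq:souris} only once you know that your $\alpha_\pm$, $m_\pm$ coincide with $\langle\nu,c\rangle$, $\langle\nu,c\rho\rangle$ (resp.\ $\langle\nu,1-c\rangle$, $\langle\nu,(1-c)\rho\rangle$).

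Your linear formulation does give this identification cheaply. Since $\nu$ only charges $c\in\{0,1\}$, Proposition \ref{thm:33} says exactly that these moments solve your linear system for $(\alpha_\pm,m_\pm)$ with the same data. Uniqueness for linear transport then gives equality. But at that point $\alpha_++\alpha_-=\langle\nu,1\rangle=1$ is immediate, and the ODE argument is superfluous. On the paper's route the same conclusion follows from $\nu=\widetilde{\nu}$ via Proposition \ref{thm:kinuniq}.
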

\begin{proof}
As $u\in L^1(0,T,W^{1,\infty}(\T))$, for all $a,b\in L^1(0,T,L^\infty(\T))$ and $f_0\in L^\infty(\T)$, the system 
\begin{empheq}[left=\empheqlbrace]{alignat=1}
\partial_t f + u\partial_x f &= a f + b,\\
f(0,\cdot) &= f_0
\end{empheq}
has a unique solution in $L^\infty([0,T]\times\T)$. Indeed, introducing the characteristics associated to $u$, this Cauchy problem can be rewritten as a linear ODE. Choosing
\begin{equation}
a = -\frac{R_\pm\theta}{\mu_\pm}, \quad b = -\frac{\sigma}{\mu_\pm}, \quad f_0 = \rho_{0,\pm},
\end{equation}
We deduce the existence of some $\rho_\pm\in L^\infty([0,T]\times\T)$ such that
\begin{equation}
\partial_t\rho_\pm + u\partial_x \rho_\pm = -\frac{\sigma}{\mu_\pm}-\frac{R\rho_\pm\theta_\pm}{\mu_\pm},\quad \rho_\pm(0,\cdot) = \rho_{0,\pm}.
\end{equation}
Choosing then
\begin{equation}
a = \frac{\sigma}{\mu_\pm} + \frac{R_\pm\rho_\pm\theta}{\mu_\pm} -\partial_x u,\quad b = 0, \quad f_0=\alpha_{0,\pm}, 
\end{equation}
we get the existence of $\alpha_\pm$ solution of \eqref{eq:souris} with $\alpha_\pm(0,\cdot) = \alpha_{0,\pm}$. Finally, it is straightforward to verify that $(\alpha_\pm,\rho_\pm)$ verifies \eqref{eq:souris} and \eqref{eq:chat}.
\end{proof}
In our problem, the regularity of $u$, $\sigma$ and $\theta$ is sufficient to apply Proposition \ref{prop:existence} (see Proposition \ref{prop:twin}, Proposition \ref{prop:lala}). Let us define $\widetilde{\nu}:[0,T]\times\T\rightarrow {\cal M}(K)$ by
\begin{equation}
\text{for all }(t,x)\in[0,T]\times \T,\quad \widetilde{\nu}_{t,x} = \alpha_+(t,x)\delta_{1,\rho_+(t,x)} + \alpha_-(t,x)\delta_{0,\rho_-(t,x)},
\end{equation}
where $(\alpha_\pm,\rho_\pm)\in L^\infty([0,T]\times\T)^4$ is the solution of \eqref{eq:souris}--\eqref{eq:chat} with initial condition $(\alpha_{0,\pm},\rho_{0,\pm})$. Then $\widetilde{\nu}_{0,\cdot} = \nu_0$. Moreover, $\widetilde{\nu}\in C([0,T],{\cal M}(K,\T))$ because $D_t\alpha_\pm, D_t\rho_\pm \in L^\infty(0,T,L^1(\T))$ (see \cite{BrBuGJLa}, Lemma 5.1 for more details). 
\subsection{Characterization of the Young measure}
The aim of this subsection is to show that $\nu = \widetilde{\nu}$. Proposition \ref{prop:etonconclut} then concludes the proof of Theorem \ref{thm:main}. 
\begin{prop}
The Young measure $\nu$ satisfies the kinetic equation
\begin{equation}\label{eq:kineq}
\partial_t\nu + \partial_x(u\nu) = \left( \frac{\sigma}{\mu} + \frac{R\rho\theta}{\mu}\right)\nu + \partial_\rho\left(\left(\frac{\rho\sigma}{\mu} + \frac{R\rho^2\theta}{\mu}\right)\nu\right).
\end{equation}
\end{prop}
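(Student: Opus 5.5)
The plan is to derive, at the level $\varepsilon>0$, an exact transport equation for $\beta(c^\varepsilon,\rho^\varepsilon)$ with $\beta\in C^1(K)$. Then we pass to the limit with Proposition \ref{prop:lala} and Proposition \ref{prop:twin}, and read off \eqref{eq:kineq} in its weak (duality) form. That weak form is: for every $\beta\in C^1(K)$,
\begin{equation*}
\partial_t\langle\nu,\beta\rangle + \partial_x(u\langle\nu,\beta\rangle) = \left\langle\nu,\frac{\beta-\rho\partial_\rho\beta}{\mu}\right\rangle\sigma + \left\langle\nu,\frac{R\rho(\beta-\rho\partial_\rho\beta)}{\mu}\right\rangle\theta
\end{equation*}
in ${\cal D}'(]0,T[\times\T)$. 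Pairing the right-hand side of \eqref{eq:kineq} with $\beta$ and integrating by parts in $\rho$ gives $\langle\nu,(\sigma/\mu+R\rho\theta/\mu)\beta\rangle-\langle\nu,(\rho\sigma/\mu+R\rho^2\theta/\mu)\partial_\rho\beta\rangle$, which is exactly this expression. So it suffices to prove the displayed identity.

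\textbf{Step 1 (mesoscopic identity).} From \eqref{eq:cmeso} and \eqref{eq:contmeso} we have $D_tc^\varepsilon=0$ and $D_t\rho^\varepsilon=-\rho^\varepsilon\partial_xu^\varepsilon$. Hence
\begin{equation*}
\partial_t\beta(c^\varepsilon,\rho^\varepsilon)+\partial_x(u^\varepsilon\beta(c^\varepsilon,\rho^\varepsilon)) = (\beta-\rho^\varepsilon\partial_\rho\beta)(c^\varepsilon,\rho^\varepsilon)\,\partial_xu^\varepsilon.
\end{equation*}
This can be justified for the strong solutions and then for the "à la Hoff" solutions by the bounds of Theorem \ref{thm:bounds}. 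We then substitute $\partial_xu^\varepsilon=\sigma^\varepsilon/\mu(c^\varepsilon)+R(c^\varepsilon)\rho^\varepsilon\theta^\varepsilon/\mu(c^\varepsilon)$, using \eqref{eq:defsigma}. The right-hand side becomes $\gamma_1(c^\varepsilon,\rho^\varepsilon)\sigma^\varepsilon+\gamma_2(c^\varepsilon,\rho^\varepsilon)\theta^\varepsilon$, with $\gamma_1=(\beta-\rho\partial_\rho\beta)/\mu$ and $\gamma_2=R\rho(\beta-\rho\partial_\rho\beta)/\mu$. Both are continuous on $K$, since $\mu,R$ are affine in $c$ and $\mu\geq\mu_{\min}>0$.

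\textbf{Step 2 (passing to the limit).} We test against $\varphi\in C_c^\infty(]0,T[\times\T)$. The terms $\int\beta(c^\varepsilon,\rho^\varepsilon)\partial_t\varphi$ converge by the weak-$\star$ convergence of $\langle\nu^\varepsilon,\beta\rangle$. The term $\int u^\varepsilon\beta(c^\varepsilon,\rho^\varepsilon)\partial_x\varphi$ is a strong$\times$weak product: $u^\varepsilon\to u$ strongly in $L^2$ by \eqref{eq:papille}, and $\beta(c^\varepsilon,\rho^\varepsilon)$ is bounded in $L^\infty$ and converges weakly. The term $\int\gamma_2(c^\varepsilon,\rho^\varepsilon)\theta^\varepsilon\varphi$ is handled the same way using \eqref{eq:narine}. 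The only genuinely nonlinear product is $\gamma_1(c^\varepsilon,\rho^\varepsilon)\sigma^\varepsilon$, where neither factor converges strongly. This is exactly \eqref{eq:france} applied with $\gamma_1\in C(K)$; it rests on the compensated-compactness pairing of $\sigma^\varepsilon$, bounded in $L^2(0,T,H^1)$, with $\langle\nu^\varepsilon,\gamma_1\rangle$, compact in $L^2(0,T,H^{-1})$. This step is the main obstacle. To make it airtight, I would recheck that the Aubin--Lions argument of Proposition \ref{prop:lala} indeed gives strong convergence in $L^2(0,T,H^{-1}(\T))$, via the compact embedding $L^2\subset H^{-1}$ and the time-derivative bound in $L^2(0,T,H^{-1})$ coming from Step 1. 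Then the duality $H^1$--$H^{-1}$ passes the product to the limit, and the result extends from $C^1$ to $C(K)$ by density with uniform $L^\infty$ bounds.

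\textbf{Step 3 (conclusion).} Collecting the limits gives the displayed identity for all $\beta\in C^1(K)$, hence \eqref{eq:kineq} in the sense of distributions on $]0,T[\times\T\times K$. The time continuity $\nu\in C(0,T,{\cal M}(K\times\T))$ from the previous proposition yields the initial datum $\nu_{0}$.
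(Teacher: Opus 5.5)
Your proposal is correct and follows the paper's proof essentially step by step. You derive the renormalized identity $\partial_t\beta(c^\varepsilon,\rho^\varepsilon)+\partial_x(u^\varepsilon\beta(c^\varepsilon,\rho^\varepsilon))=(\beta-\rho^\varepsilon\partial_\rho\beta)\partial_x u^\varepsilon$ with $\partial_x u^\varepsilon$ rewritten through $\sigma^\varepsilon$, pass to the limit via Propositions \ref{prop:twin} and \ref{prop:lala} (strong convergence of $u^\varepsilon,\theta^\varepsilon$, and \eqref{eq:france} for the $\sigma^\varepsilon$ term), and then use the duality $\langle\nu,\partial_\rho f\rangle=-\langle\partial_\rho\nu,f\rangle$ to obtain \eqref{eq:kineq}; your reading of the Aubin--Lions step with the compact embedding $L^2(\T)\subset H^{-1}(\T)$ (the paper writes the inclusion the wrong way round) and the uniform approximation to handle the merely continuous coefficient $(\beta-\rho\partial_\rho\beta)/\mu$ is the right way to make that step rigorous.
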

\begin{proof}
Let $\beta\in C^1(K)$. Then
\begin{align}
\partial_t\beta(c^\varepsilon,\rho^\varepsilon) + \partial_x(u^\varepsilon\beta(c^\varepsilon,\rho^\varepsilon)) &= (\beta(c^\varepsilon,\rho^\varepsilon) - \rho^\varepsilon\partial_\rho\beta(c^\varepsilon,\rho^\varepsilon))\partial_x u^\varepsilon
\\&= (\beta(c^\varepsilon,\rho^\varepsilon) - \rho^\varepsilon\partial_\rho\beta(c^\varepsilon,\rho^\varepsilon))\left(\frac{\sigma^\varepsilon}{\mu^\varepsilon} + \frac{R^\varepsilon\rho^\varepsilon}{\mu^\varepsilon}\theta^\varepsilon\right).
\end{align}
As a consequence, passing to the limit by using Proposition \ref{prop:twin} and Proposition \ref{prop:lala},
\begin{equation}
\partial_t \langle \nu, \beta\rangle + \partial_x(u\langle \nu,\beta\rangle) = \left\langle\nu, \frac{\beta - \rho\partial_\rho\beta}{\mu}\right\rangle\sigma + \left\langle \nu, \frac{R\rho(\beta - \rho\partial_\rho\beta)}{\mu}\right\rangle\theta.
\end{equation}
Note that, if $f,g\in C^1(K)$, then
\begin{equation}
\langle \nu, f g\rangle = \langle f\nu,g\rangle,
\end{equation}
\begin{equation}
\langle \nu, \partial_\rho f \rangle = -\langle \partial_\rho \nu, f\rangle, 
\end{equation}
thus \eqref{eq:kineq}.
\end{proof}
Moreover, it is straightforward to show that $\widetilde{\nu}$ is solution of \eqref{eq:kineq}. In order to prove that $\nu=\widetilde{\nu}$, it remains to prove the following uniqueness result for \eqref{eq:kineq}:
\begin{prop}\label{thm:kinuniq}
Let $\nu_0\in {\cal P}(K)$. Then the kinetic equation \eqref{eq:kineq} with initial condition $\nu_0$ has a unique solution in $C([0,T],{\cal M}(K\times\T))$. 
\end{prop}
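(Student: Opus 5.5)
The equation \eqref{eq:kineq} is linear in $\nu$, with coefficients built from the given limits $u\in L^1(0,T,W^{1,\infty}(\T))$, $\sigma\in L^1(0,T,L^\infty(\T))$ (Propositions \ref{prop:twin}, \ref{prop:lala} and the bound on $\int_0^T\Vert\sigma\Vert_\infty^3$) and $\theta\in L^\infty$. I will therefore argue by duality: it suffices to show that a solution with zero initial datum vanishes. For this I solve the backward adjoint problem with smooth final data and then test the equation for the difference of two solutions against it.

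\textbf{Step 1 (characteristics).} On $K\times\T$, with variables $(c,\rho,x)$, the transport field is
\[
\dot x = u(t,x),\qquad \dot\rho = -\frac{\rho\sigma}{\mu(c)}-\frac{R(c)\rho^2\theta}{\mu(c)},\qquad \dot c = 0.
\]
Here $c$ is only a parameter; in the situation of interest $\nu$ is supported on $c\in\{0,1\}$, but I keep $c\in[0,1]$ general. The zeroth-order coefficient is $a=\frac{\sigma}{\mu}+\frac{R\rho\theta}{\mu}$. The $x$-flow is well defined and bi-Lipschitz because $u$ is Lipschitz in $x$ with an $L^1$-in-time constant. The $\rho$-equation is a Riccati ODE in $\rho$ whose coefficients are bounded in $\rho$ on the compact set $K$, with an $L^1$-in-time bound. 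I must also check that $K=[0,1]\times[\underline\rho,\overline\rho]$ is invariant, or enlarge it if necessary. This should follow from the a priori density bounds: $\widetilde\nu$ and $\nu$ both stay in $K$, since $\rho_\pm$ obey the same ODE as the microscopic density along the limit flow.

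\textbf{Step 2 (adjoint problem).} For $\psi\in C^1_c(K\times\T)$ I solve backward
\[
\partial_t\varphi+u\partial_x\varphi-\Big(\frac{\rho\sigma}{\mu}+\frac{R\rho^2\theta}{\mu}\Big)\partial_\rho\varphi+a\varphi=0,\qquad \varphi(T)=\psi,
\]
explicitly along characteristics. The difficulty is that $\partial_x\varphi$ inherits only the regularity of $\partial_x$ of the flow. This involves $\partial_x\sigma,\partial_x\theta$, which lie only in $L^2(0,T;L^2)$, not $L^\infty$. So $\varphi$ is merely Lipschitz in $\rho$ and continuous, or $W^{1,2}$, in $x$, and is not $C^1$.

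\textbf{Step 3 (duality with regularization, the main obstacle).} To handle the lack of regularity I mollify the coefficients in $x$: $u_\delta,\sigma_\delta,\theta_\delta$. I then solve the smooth adjoint problem for $\varphi_\delta$ and test the equation satisfied by $\nu-\widetilde\nu$ (zero initial datum). This gives
\[
\langle(\nu-\widetilde\nu)(T),\psi\rangle=\int_0^T\!\!\langle \nu-\widetilde\nu,\ \text{commutator terms}\rangle ,
\]
where the commutators involve $(u-u_\delta)\partial_x\varphi_\delta$, $(\sigma-\sigma_\delta)\partial_\rho\varphi_\delta$, $(\sigma-\sigma_\delta)\varphi_\delta$, and so on. The terms without $x$-derivatives tend to $0$, because $\partial_\rho\varphi_\delta$ and $\varphi_\delta$ are bounded uniformly in $\delta$ by Grönwall using only $L^1_tL^\infty_x$ norms. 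The delicate term is $(u-u_\delta)\partial_x\varphi_\delta$. Its size is $\Vert u-u_\delta\Vert_\infty\lesssim\delta\Vert\partial_xu\Vert_\infty$, while $\Vert\partial_x\varphi_\delta\Vert_\infty$ can grow like $\delta^{-1/2}$ because of the $H^1$ regularity of $\sigma,\theta$. The product is $O(\delta^{1/2})\to0$. This balancing is a DiPerna--Lions type commutator argument. Alternatively, I would first try to avoid it entirely: the $x$-dependence can be removed by composing with the Lagrangian flow of $u$, which is bi-Lipschitz. In Lagrangian coordinates \eqref{eq:kineq} becomes, for a.e. label $y$, a family of one-dimensional linear transport equations in $\rho$ with a source term. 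Their coefficients are Lipschitz in $\rho$ with $L^1$-in-time constants, so uniqueness in $C([0,T],\mathcal M(K))$ for each fixed $y$ follows from the classical measure-valued Cauchy--Lipschitz theory. Integrating over $y$ then yields $\nu=\widetilde\nu$. I would pursue this Lagrangian route first, since it avoids any $x$-derivative of $\sigma$ or $\theta$.
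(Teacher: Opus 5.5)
Your argument is correct, but it does not follow the paper's route. The paper builds no adjoint and makes no Lagrangian change of variables. It writes \eqref{eq:kineq} as $\partial_t\nu+\Div_{x,c,\rho}(\mathbf{u}\nu)=\mathbf{g}\nu$ and records the block-triangular regularity of the coefficients. Specifically, $\partial_x u$ and the $(c,\rho)$-gradients of the $\rho$-component of $\mathbf{u}$ and of $\mathbf{g}$ lie in $L^1(0,T,L^\infty)$. Their $x$-derivatives, which contain $\partial_x\sigma$ and $\partial_x\theta$, are only used in $L^1(0,T,L^1)$. It then applies Lemma 5.2 of \cite{BrBuGJLa} as a black box.

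That lemma is of DiPerna--Lions/Le Bris--Lions type: it needs only $W^{1,1}$-regularity in $x$. As is typical for such results, it works with solutions having bounded density in $x$, which the Young measures $\nu$ and $\widetilde\nu$ have.

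You instead exploit two one-dimensional facts. First, $u$ does not depend on $(c,\rho)$ and is Lipschitz in $x$, so its flow is bi-Lipschitz. Second, $\sigma(t,\cdot),\theta(t,\cdot)\in H^1(\T)\subset C^{0,1/2}(\T)$. The second fact is what actually drives your estimates:
\begin{itemize}
\item it gives $\Vert\sigma-\sigma_\delta\Vert_\infty\lesssim\delta^{1/2}\Vert\partial_x\sigma\Vert_2$ and $\Vert\partial_x\sigma_\delta\Vert_\infty\lesssim\delta^{-1/2}\Vert\partial_x\sigma\Vert_2$, hence your $O(\delta^{1/2})$ commutator;
\item it makes the Lagrangian coefficients continuous in the label $y$.
\end{itemize}
You should state it explicitly. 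Your treatment of the non-derivative terms (uniform bounds on $\varphi_\delta$ and $\partial_\rho\varphi_\delta$) also needs $\sigma_\delta\to\sigma$ in $L^1(0,T,L^\infty)$. That convergence comes from this continuity, not from $\sigma\in L^1(0,T,L^\infty)$ alone. The payoff of your approach is a self-contained proof valid for arbitrary solutions in $C([0,T],\mathcal{M}(K\times\T))$, which is the class literally in the statement. The cost is more bookkeeping than a citation.

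Three points to tighten in the Lagrangian version.
\begin{itemize}
\item Justify the change of variables by mollifying $u$ only. With $X^\delta_t$ the flow of $u_\delta$, test with $\psi(t,c,\rho,(X^\delta_t)^{-1}(x))$. The error is $\Vert u-u_\delta\Vert_\infty$ times a uniform Lipschitz bound on $(X^\delta_t)^{-1}$, hence $O(\delta)$.
\item Replace ``for a.e.\ $y$, then integrate'' by a single duality argument on $K\times\T_y$ with $(c,y)$ as parameters. Because of the source term, the $(c,y)$-marginals of a general measure solution vary in time and need not be absolutely continuous, so a fibrewise statement is not directly meaningful in this class.
\item Invariance of $K$ is irrelevant for uniqueness. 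Solutions are supported in $K$, so you may cut off the coefficients outside a neighbourhood of $K$ to make them globally Lipschitz in $\rho$. Invariance only matters for existence on $K$, which in the application is supplied by $\widetilde\nu$.
\end{itemize}
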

\begin{proof}
The equation \eqref{eq:kineq} can be rewritten as 
\begin{equation}
\partial_t \nu + \Div(\bf{u} \nu) = \bf{g}\nu
\end{equation}
with $\Div = \Div_{x,c,\rho}$ and
\begin{align}
\text{for all }(t,x,c,\rho)\in [0,T]\times\T\times K,\quad &{\bf{u}}(t,x,c,\rho) = \left(u(t,x), 0, - \frac{\rho\sigma(t,x)}{\mu(c)} - \frac{R(c)\rho^2\theta(t,x)}{\mu(c)}\right),
\\&{\bf{g}}(t,x,c,\rho) = \frac{\sigma(t,x)}{\mu(c)} + \frac{R(c)\rho \theta(t,x)}{\mu(c)}.
\end{align}
We get
\begin{equation}
\nabla {\bf{u}} = \begin{pmatrix}
\partial_1 {\bf{u}}_1 & 0 \\
\partial_1 {\bf{u}}_2 & \nabla_2 {\bf{u}}_2
\end{pmatrix},\quad \nabla\bf{g} = \begin{pmatrix}
\partial_1 {\bf{g}}\\
\nabla_2 {\bf{g}}
\end{pmatrix},
\end{equation}
where
\begin{equation}
\partial_1 {\bf{u}}_1 = \partial_x u  \in L^1(0,T,L^\infty(\T\times K)),\quad  \partial_1{\bf{u}}_2 = \begin{pmatrix}
0 \\
-\dfrac{\rho\partial_x\sigma}{\mu} - \dfrac{R\rho^2\partial_x\theta}{\mu}
\end{pmatrix}\in L^1(0,T,L^1(\T\times K)),
\end{equation}
\begin{equation}
\nabla_2 {\bf{u}}_2 = \begin{pmatrix}
0 & 0\\
\dfrac{\rho\sigma(\mu_+ - \mu_-)}{\mu^2} - \dfrac{(R_+ - R_-)\rho^2\theta}{\mu} + \dfrac{R\rho^2\theta(\mu_+ - \mu_-)}{\mu^2}  & -\dfrac{\sigma}{\mu} - 2\dfrac{R\rho\theta}{\mu} 
\end{pmatrix}\in L^1(0,T,L^\infty(\T\times K)),
\end{equation}
\begin{equation}
\partial_1 {\bf{g}} = \frac{\partial_x\sigma}{\mu}\in L^1(0,T,L^1(\T\times K)),
\end{equation}
\begin{equation}
\quad \nabla_2 {\bf{g}} =
\begin{pmatrix}
\dfrac{-\sigma(\mu_+ - \mu_-)}{\mu^2} + \dfrac{(R_+ - R_-)\rho\theta}{\mu} - \dfrac{R\rho\theta(\mu_+ - \mu_-)}{\mu^2}\\
\dfrac{R\theta}{\mu}
\end{pmatrix}\in L^1(0,T,L^\infty(\T\times K)).
\end{equation}
Thus Lemma 5.2. in \cite{BrBuGJLa} can be applied, and gives Theorem \ref{thm:kinuniq}.
\end{proof}
\section*{Acknowledgements}  

This work was partially supported by the Agence Nationale pour la Recherche under France 2030 bearing the reference ANR-23-EXMA-004 (Complexflows).
The author would like to thank Didier Bresch and Frédéric Lagoutière for their attention and comments. 
\appendix
\section{Notations}\label{app:not}
\begin{notation}
For all $f:\T\rightarrow \R$ and $p\in [1,+\infty]$, we denote by $\Vert f\Vert_p$ the $L^p$ norm of $f$. If $f$ is also a function of $t\in [0,T]$, then $\Vert f\Vert_p$ is a function, $\Vert f\Vert_p : [0,T]\rightarrow [0,+\infty[$.
\end{notation}

\begin{notation}
For the rest of the analysis, let us introduce the total derivative $D_t$ formally defined for a function $f:\R_+\rightarrow \R$ by
\begin{equation*}
D_t f := \partial_t f + u\partial_x f.
\end{equation*}
Observe that from \eqref{eq:cont} we get
\begin{equation*}
\rho D_t f = \partial_t(\rho f) + \partial_x(\rho u f).
\end{equation*}
We then define a reciprocal at right operator $D_t^{-1}$ by
\begin{empheq}[left=\empheqlbrace]{align*}
D_t D_t^{-1} f &= f,\\
D_t^{-1} f(0,\cdot) &= 0.
\end{empheq}
Note that $D_t^{-1}$ is well defined supposing $\partial_x u\in L^1(0,T,L^\infty(\T))$, which corresponds well with the framework of the analysis. Then, defining
\begin{equation}\label{eq:20}
R_t f := f - D_t^{-1} D_t f,
\end{equation}
we get
\begin{empheq}[left=\empheqlbrace]{align*}
D_t (R_t f) &= 0,\\
(R_t f)(0,\cdot) & = f(0, \cdot),
\end{empheq}
then by some maximal principle (because $u\in L^1(0,T,W^{1,\infty}(\T))$)
\begin{equation}\label{eq:21}
\essinf_{x\in \T} f(0,x) \leq R_t f \leq \esssup_{x\in \T} f(0,x)\quad \text{almost everywhere}.
\end{equation}
In particular, if $f(0,\cdot) = 0$, then $R_t f = 0$ and $D_t^{-1} D_t f = f$. Moreover, $D_t^{-1}$ is a non-negative operator, and in particular, for all $(t,x)\in [0,+\infty[\times\T$,
\begin{equation}\label{eq:22}
\vert D_t^{-1} f\vert(t,x) \leq \int_0^t \Vert f\Vert_\infty.
\end{equation}
Remark moreover that
\begin{equation}\label{eq:23}
D_t^{-1} \int_0^1 f(t,x)dx = \int_0^t\int_0^1 f,
\end{equation}
and 
\begin{equation}\label{eq:rigolo}
D_t^{-1} (fc) = cD_t^{-1}f.
\end{equation}
\end{notation}

\begin{notation}
For a function $f:\T\rightarrow \R$, let us formally define $\partial_x^{-1}$ the right inverse operator of $\partial_x$, reciprocal at left on the space of zero mean functions, by
\begin{equation*}
\text{for all }x\in\T,\quad\partial_x^{-1} f (x) = \int_0^1dy\int_y^x f(z)dz.
\end{equation*}
In particular,
\begin{equation}\label{eq:24}
\vert\partial_x^{-1}f\vert\leq \int_0^1 \vert f\vert.
\end{equation}
Observe that $\partial_x^{-1}f$ is $1$-periodic if and only if $\int_0^1 f = 0$. Thus we get the formula
\begin{equation}\label{eq:25}
\partial_x^{-1}\partial_x f = f - \int_0^1 f.
\end{equation}
\end{notation}
\begin{notation}
If $f \in \{\mu,\cv,\gamma,R,\kappa\}$, we denote by

\[f_{\max} = \max(f(0), f(1)),\quad f_{\min} = \min(f(0),f(1)).\]
\end{notation}

\section{Justification of existence of solutions to  \texorpdfstring{\eqref{eq:cmeso}--\eqref{eq:thetameso}}{NS}}\label{app:justif}
\subsection{Local existence of strong solutions}

Let us recall the following local existence theorem by D. Serre \cite{Se10}:
\begin{theorem}[Serre]\label{thm:deserre}
Let $n\geq 1$ and $\mathcal{O}\subset \R^n$. Consider the hyperbolic-parabolic problem
\begin{equation}\label{eq:systlocal}
\left\{\begin{aligned}
\partial_t U + \partial_x F(U) &= \partial_x(B(U)\partial_x U),\\
U(0,\cdot) &= U_0, 
\end{aligned}
\right.
\end{equation}
where $F:\mathcal{O}\rightarrow \R^n$ and $B:\mathcal{O}\rightarrow M_n(\R)$ are smooth functions. Assume that
\begin{enumerate}
\item the associated hyperbolic problem
\begin{equation}
\partial_t U + \partial_x F(U) = 0
\end{equation}
admits an entropy-flux pair $(\eta,q)$ such that $\eta$ is strictly convex; 
\item the system \eqref{eq:systlocal} is \textit{strongly entropy dissipative} for $\eta$, in the sense that there exists a continuous and positive map $\omega : \mathcal{O}\rightarrow \R_+^*$ such that, for all $U\in \mathcal{O}$,
\begin{equation}
\text{for all }\xi\in\R^n,\quad \xi^\intercal B(U)^\intercal \nabla^2\eta(U) \xi \geq \omega \Vert B(U)\xi\Vert_2^2;
\end{equation}
\item for all $U\in \mathcal{O}$, the matrix $B(U)$ has the form
\begin{equation}
B(U) = \left(\begin{array}{cc}
0_{p\times n}\\
b(U)
\end{array}\right)
\end{equation}
where the rank of $b(U)\in M_{n-p,n}(\R)$  is always $n-p$;
\item the unique solution $Z(U)\in M_{n-p,n-p}(\R)$ of $b(U) = Z(U) J_p (\nabla^2\eta)$, where
\begin{equation}
J_p = \begin{pmatrix}
0_{p\times(n-p)}& I_{(n-p)\times(n-p)}
\end{pmatrix},
\end{equation}satisfies that for all $K\subset \mathcal{O}$ and every derivative $\partial$ of order $1$ or $2$, there exists $c_{K}>0$ such that, for all $U\in K$,
\begin{equation}
\text{for all }\xi\in\R^n,\quad\vert\partial Z(U) \xi\vert\leq c_{k,K}\vert Z(U)\xi\vert.
\end{equation}
\end{enumerate}
Then, given an initial data $u_0\in H^2(\T)$, there exists $T>0$ and a unique solution in the class
\begin{equation}
U\in C(0,T,H^2(\T)),\quad \partial_t U\in L^2(0,T,H^{1}(\T)).
\end{equation} 
\end{theorem}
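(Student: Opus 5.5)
The plan is to follow the classical Kawashima--Shizuta strategy for partially dissipative systems, as in \cite{Kaw,KS,Se10}. First I would symmetrize the system using the strictly convex entropy $\eta$ of assumption 1. Take the entropy variable $V=\nabla\eta(U)$, which is a diffeomorphism on $\mathcal{O}$ because $\nabla^2\eta>0$. In this variable the system becomes $A_0(V)\partial_t V + A_1(V)\partial_x V = \partial_x(\widetilde{B}(V)\partial_x V)$ with $A_0=(\nabla^2\eta)^{-1}$ symmetric positive definite and $A_1$ symmetric. The symmetry of $A_1$ is the usual consequence of the existence of the flux $q$.

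Next I would use assumptions 3 and 4 to put the dissipative part in block form. The identity $b(U)=Z(U)J_p\nabla^2\eta$ says precisely that $\widetilde{B}$ acts only on the last $n-p$ components of $V$. Writing $V=(V_1,V_2)$, the system splits into a symmetric hyperbolic equation for $V_1$ coupled to a parabolic equation for $V_2$. Assumption 2 then gives the coercivity $\xi_2^\intercal Z\xi_2\geq\omega|Z\xi_2|^2$, so $Z$ is uniformly positive on compact subsets of $\mathcal{O}$.

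The core step is a priori estimates for the linearized problem. Freeze the coefficients at a given $\overline{V}\in C(0,T,H^2)$ taking values in a fixed compact $K\subset\mathcal{O}$. Then apply $\partial_x^k$ for $k\le 2$, multiply by $\partial_x^k V$ and integrate over $\T$. The hyperbolic part contributes only commutator terms, bounded by $\|\partial_x\overline{V}\|_{W^{1,\infty}}$-type quantities, which Sobolev embedding in 1D controls by $\|\overline{V}\|_{H^2}$. The parabolic block yields the dissipation $\int|\partial_x^{k+1}V_2|^2$. The cross terms in which $V_2$-derivatives hit hyperbolic components are absorbed by Young's inequality into that dissipation.

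This is where assumption 4 enters: $|\partial Z\,\xi|\le c_K|Z\xi|$ guarantees that differentiating $Z$ never produces terms that fail to be controlled by the dissipation $|Z\partial_x^{k+1}V_2|^2$. The result is a Grönwall estimate
\[\sup_{[0,T]}\|V\|_{H^2}^2+\int_0^T\|\partial_x V_2\|_{H^2}^2\le C(\|V_0\|_{H^2})\,e^{C(M)T}\]
for $\|\overline{V}\|\le M$. For short $T$ this keeps the iterates in a ball of $C(0,T,H^2)$ and in $K$. Existence for the linear problem itself follows from standard Galerkin or duality arguments.

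Finally I would run a Picard iteration $V^{n+1}=\mathcal{S}(V^n)$. The map is bounded in the high norm and contracts in the low norm $C(0,T,L^2)$; the latter is shown by the same energy argument applied to differences, with no loss of derivatives at order zero. The limit is then in $C(0,T,H^2)$ by weak compactness plus the usual strong continuity argument via the energy equality. The regularity $\partial_tU\in L^2(0,T,H^1)$ is read off from the equation, and uniqueness follows from the $L^2$ difference estimate. I expect the main obstacle to be the top-order ($k=2$) estimate. There the coupling between the hyperbolic block and the parabolic block must be handled without losing a derivative, and this is exactly what the structural assumptions 3--4 are designed to prevent.
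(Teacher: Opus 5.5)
Your outline is sound, but it does not follow the paper's route, because the paper gives no proof of this theorem. It quotes the result from Serre \cite{Se10}, where it is proved on $\R^d$ for $H^s$ data with $s>1+d/2$, and only remarks that the argument adapts to $\T$.

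You instead reprove the special case that is actually used ($d=1$, $s=2$, periodic) by the classical Kawashima energy method. Your steps are these:
\begin{itemize}
\item pass to the entropy variables $V=\nabla\eta(U)$;
\item note that the identity in assumption 4 gives $\widetilde B=BH^{-1}=\mathrm{diag}(0,Z)$, with $H=\nabla^2\eta$, and that assumptions 2--3 make $Z$ coercive on compacts;
\item prove $H^2$ estimates for the frozen-coefficient problem;
\item close with a contraction in $L^2$.
\end{itemize}
This is legitimate. The classical threshold $s\geq[d/2]+2$ equals $2$ when $d=1$, and the energy method does not care whether the domain is $\R$ or $\T$. So your argument actually supplies the adaptation to the torus that the paper leaves to the reader. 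What the citation buys is the sharper range $s>3/2$, and any dimension, which the direct method does not reach.

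A few points need tightening.
\begin{itemize}
\item The commutator terms should be bounded through $\Vert\partial_x\overline V\Vert_{L^\infty}$ and $\Vert\partial_x^2\overline V\Vert_{L^2}$ (Moser-type estimates). A $W^{1,\infty}$ norm of $\partial_x\overline V$ is not the right quantity, since $H^2(\T)$ does not control it.
\item Your iteration ball must also bound $\int_0^T\Vert\partial_x\overline V_2\Vert_{H^2}^2$, not only $\sup_t\Vert\overline V\Vert_{H^2}$. The term coming from $\partial_t A_0(\overline V)$ in the energy identity needs $\partial_t\overline V\in L^1(0,T;L^\infty)$. From the equation, that follows only through $\partial_x^2\overline V_2\in L^2(0,T;L^\infty)$.
\item The system does not literally split into a hyperbolic equation for $V_1$ and a parabolic one for $V_2$, because $A_0=(\nabla^2\eta)^{-1}$ is not block-diagonal. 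This is harmless, since your estimates use $A_0$ only as a symmetrizer; alternatively, pass to the Kawashima--Shizuta variables $(U_1,V_2)$.
\item Assumption 4 is not where your argument hinges. By assumption 3, $Z$ is invertible on compacts, so $\vert\partial Z\,\xi\vert\le c_K\vert Z\xi\vert$ holds automatically; this is exactly how the paper verifies it in its application. The $\partial Z$ terms are then controlled simply because $\partial Z$ is bounded on $K$.
\end{itemize}
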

\begin{remark}
This theorem is stated in \cite{Se10} in dimension $d\geq 1$, for a $H^s$ regularity where $s>1+d/2$. Moreover, D. Serre deals with the space $\R^d$ rather than $\T^d$, but his arguments can be adapted to the torus.
\end{remark}
Our goal is to apply Theorem \ref{thm:deserre} for the system \eqref{eq:cmeso}--\eqref{eq:energmeso}. Let us consider $\mathcal{O}_0\subset\subset [0,1]\times ]0,+\infty[\times\R\times ]0,+\infty[$ and $(c,\rho,u,\theta)\in \mathcal{O}_0$. Considering the change of variables
\begin{equation}
A_0 : \left|\begin{array}{ccc}
\mathcal{O}_0 &\rightarrow &\mathcal{O}\\
U_0 = \begin{pmatrix} c&\rho&u&\theta\end{pmatrix}^\intercal &\mapsto &U=\begin{pmatrix}\rho_c&\rho&m&j\end{pmatrix}^\intercal,
\end{array}\right.
\end{equation}
where
\begin{equation}
U = A_0(U_0) =\begin{pmatrix}\rho c&\rho&\rho u&\rho\cv(c)\theta + \rho u^2/2 \end{pmatrix}^\intercal,
\end{equation}
the system \eqref{eq:cmeso}--\eqref{eq:energmeso} can be rewritten as
\begin{equation}\label{eq:247}
\partial_t U + \partial_x F(U) = \partial_x(B(U)\partial_x U),
\end{equation}
with
\begin{equation}
F(U) = \begin{pmatrix}
\rho c u & \rho u & \rho u + R(c)\rho\theta & \rho u^3/2 + \rho \cv(c)\theta u + R(c)\rho \theta u
\end{pmatrix}^\intercal,
\end{equation}
\begin{equation}
B(U) = \begin{pmatrix}
0 & 0 & 0 & 0\\
0 & 0 & 0 & 0\\
0 & -\dfrac{\mu(c)}{\rho} & \dfrac{\mu(c)}{\rho} & 0\\
-\dfrac{(\cv_+-\cv_-)\kappa(c)\theta}{\cv(c)\rho} & -\dfrac{\mu(c) u^2}{\rho} - \dfrac{(2\cv_-\theta - u^2)\kappa(c)}{2\cv(c)\rho} & \dfrac{\mu(c) u}{\rho} - \dfrac{\kappa(c) u}{\cv(c)\rho} & \dfrac{\kappa(c)}{\cv(c)\rho}
\end{pmatrix}.
\end{equation}
\subsubsection{Construction of a strictly convex entropy}

A natural entropy associated with \eqref{eq:247} is
\begin{equation}
\eta(U) = \rho\cv(c)\ln\theta - R(c)\rho\ln\rho.
\end{equation}
However, although the physical entropy is classically strictly convex for the hyperbolic-parabolic Navier-Stokes system, this property no longer holds for the present augmented system due to the introduction of the color function $c$. Indeed, let us fix $\widetilde{c}\in [0,1]$ and introduce
\begin{equation}
L_{\widetilde{c}} = \begin{pmatrix}
\widetilde{c} & 0 & 0\\
1 & 0 & 0\\
0 & 1 & 0\\
0 & 0 & 1
\end{pmatrix},
\end{equation}
such that $(\eta\circ L_{\widetilde{c}})(\rho,m,j) = \eta(\rho\widetilde{c},\rho,m,j)$. Roughly speaking, we "freeze" the color function to get back to the classical Navier-Stokes system. It is well known that the entropy $\eta\circ L_{\widetilde{c}}$ associated to the reduced system is a strictly convex function, so there exists a continuous map $\omega_1 = \omega_1(U)>0$ such that
\begin{equation}
\text{for all }\xi\in\R^4,\quad \xi^\intercal L_{\rho_c/\rho}^\intercal (\nabla^2 \eta)(\rho_c,\rho,m,j) L_{\rho_c/\rho}\xi \geq \omega_1 \vert\xi\vert^2. 
\end{equation}
However, denoting
\begin{equation}
v = \begin{pmatrix}
1 & -\rho_c/\rho & 0 & 0
\end{pmatrix},
\end{equation}
we get $\im(L_{\rho_c/\rho})^\perp = \R v$ and
\begin{equation}
\begin{split}
v^\intercal (\nabla^2\eta)v&= \frac{1}{\cv(c)\rho}\left((1+c^2)(\cv_+-\cv_-) + \frac{u^2\rho c^2}{2\theta}\right)^2 + \frac{\cv(c)\gamma(c) c^2}{\rho}\\&- \frac{{2 {\gamma(c)}} {\left(1 + c^{2}\right)} {(\cv_+ - \cv_-)} {c}}{ {\rho}} - \frac{2 {\left(1 + {c}^{2}\right)} {\cv(c)} {(\gamma_+-\gamma_-)} {c}}{{\rho}}  \\&+ \frac{2 \, {\left(1 + c^2\right)}^{2} {(\cv_+-\cv_-)(\gamma_+-\gamma_-)} \log\left({\rho}\right)}{{\rho}},
\end{split}
\end{equation}
which is not positive in general (for example, taking $u=0$, $\cv_+ = \cv_-$ and $c$ small enough, we can even obtain negative values). 

To recover strict convexity, we augment the entropy with a quadratic stabilizer related to the conservation of the color function mass. Defining
\begin{equation}
\pi(y,\rho,m,j) = \frac{\rho_c^2}{2\rho},
\end{equation}
we get
\begin{equation}
\partial_t \pi + \partial_t (\pi u) = 0,
\end{equation}
and
\begin{equation}
\nabla^2\pi(U) = \frac{1}{\rho}\begin{pmatrix}
1 & -c & 0 & 0\\
-c & c^2 & 0 & 0\\
0 & 0 & 0 & 0\\
0 & 0 & 0 & 0
\end{pmatrix}.
\end{equation}
Note that $\ker(\nabla^2\pi) = \im(L_{\rho_c/\rho})$. Moreover,
\begin{equation}
v^\intercal (\nabla^2\pi) v = \frac{1 + c^4}{\rho}>0. 
\end{equation}

Let $\beta>0$. Let $u\in\R^4$ and $\xi = L_{\rho_c/\rho} u + v$. Then, by Cauchy-Schwarz's inequality and Young's inequality, 
\begin{align}
\xi^\intercal (\nabla^2 \eta + \beta\nabla^2 \pi)\xi &= \xi^\intercal (\nabla^2\eta)\xi + \beta\xi^\intercal (\nabla^2\pi)\xi
\\&=u^\intercal L_{\rho_c/\rho}^\intercal (\nabla^2\eta) L_{\rho_c/\rho} u + 2 v^\intercal (\nabla^2\eta)L_{\rho_c/\rho} u+ v^\intercal (\nabla^2\eta) v+ \beta v^\intercal (\nabla^2\pi)v
\\&\geq \omega_1 \vert L_{\rho_c/\rho} u\vert^2 - 2\Vert \nabla^2\eta\Vert\vert v\vert \vert L_{\rho_c/\rho}u\vert +  v^\intercal (\nabla^2\eta) v + \beta v^\intercal (\nabla^2\pi) v
\\&\geq \frac{\omega_1}{2}\vert L_{\rho_c/\rho}u\vert^2 -\frac{2\Vert\nabla^2\eta\Vert^2}{\omega_1}\vert v\vert^2 - \Vert\nabla^2\eta\Vert\vert v\vert^2 + \beta v^\intercal (\nabla^2\pi)v.
\end{align}
Thus, choosing
\begin{equation}
\beta > \sup_{U\in \mathcal{O}}\rho\vert v\vert^2 \left(\frac{2\Vert\nabla^2\eta\Vert^2}{\omega_1} + \Vert\nabla^2\eta\Vert\right),
\end{equation}
the function $\eta + \beta \pi$ is strictly convex.

\subsubsection{Checks on other assumptions of the theorem}
We still need to check the assumptions $2$, $3$ and $4$ of Theorem \ref{thm:deserre}. 

First, remark that
\begin{equation}
B^\intercal \nabla^2\pi = 0,
\end{equation}
so to show that assumption $2$ is verified, we just have to prove that there exists some $\omega:\mathcal{O}\rightarrow \R_+^*$ such that
\begin{equation}\label{eq:reduced}
\text{for all }\xi\in\R^4,\quad \xi^\intercal B^\intercal \nabla^2\eta\xi \geq \omega \vert B\xi\vert^2.
\end{equation}

Direct computations show that, for $\xi\in\R^4$,
\begin{equation}
\vert B \xi\vert^2 = 4(u^2+1)c_v^2\mu^2 \varphi_1(\xi)^2 + 4\frac{\varphi_1(\xi)\varphi_2(\xi)\cv\kappa\mu u}{\rho} + \frac{\kappa^2}{\rho^2}\varphi_2(\xi)^2
\end{equation}
\begin{equation}\label{eq:bientotvisio}
\xi^\intercal B^\intercal \nabla^2\eta \xi = 4 c_v^2\mu\rho\theta\varphi_1(\xi)^2 + \frac{\kappa}{\rho}\varphi_2(\xi)^2,
\end{equation}
with 
\begin{equation}
\varphi_1(\xi) = u \xi_2 - \xi_3,
\end{equation}
\begin{equation}
\varphi_2(\xi) = 2(\cv_+ - \cv_-)\rho\theta \xi_1 + (1-2 c \cv_--\rho u^2)\xi_2 + 2\rho u\xi_3 - 2\rho\xi_4.
\end{equation}
Thus there exists some continuous $\omega$ such that \eqref{eq:reduced} holds, as the coefficients behind $\varphi^2$ and $\varphi_2^2$ in \eqref{eq:bientotvisio} are positive.

Secondly, assumption $3$ is clearly verified with $n=4$ and $p=2$, since $\mu(c)/\rho\neq 0$ and $\kappa(c)/(\cv(c)\rho)\neq 0$.

To prove $4$, note that $Z(U)$ is solution to
\begin{equation}\label{eq:mar}
b(U) = Z(U)J_p\nabla^2(\eta+\beta\pi) = Z(U)J_p\nabla^2\eta. 
\end{equation}
As the rank of $J_p\nabla^2\eta$ and $b(U)$ are $n-p$, $Z(U)$ is invertible. Multiplying \eqref{eq:mar} by the right inverse of $J_p\nabla^2\eta$, $Z(U)$ can be expressed as a smooth function of $U$, the same holds for $Z(U)^{-1}$, and for $\partial Z(U)$. Thus, for all $K\subset {\mathcal O}$, there exists $C_K>0$ such that
\begin{equation}
\text{for all }U\in K,\quad \Vert Z(U)^{-1}\Vert\leq C_K,\quad \Vert\partial Z(U)\Vert\leq C_K.
\end{equation}
The proof of $4$ is then straightforward, since
\begin{align}
\text{for all }\xi\in\R^{n-p},\quad \vert \partial Z(U)\xi\vert &\leq \Vert \partial Z(U)\Vert \vert \xi\vert \\&\leq \Vert\partial Z(U)\Vert\Vert Z(U)^{-1}\Vert \vert Z(U)\xi\vert.
\end{align}

\subsection{Global existence of strong solutions}
\
The bounds of Theorem \ref{thm:bounds} are valid for the solution of Theorem \ref{thm:deserre}. We can then improve these bounds for smooth initial data:
\begin{prop}
Let $(c,\rho,u,\theta)\in C(0,T_0,H^2(\T))$ be a solution of \eqref{eq:cmeso}--\eqref{eq:thetameso}. Then there exists $C>0$ depending only on $\Vert (c(0,\cdot),\rho(0,\cdot),u(0,\cdot),\theta(0,\cdot))\Vert_{H^2}$ such that
\begin{equation}
\Vert (c,\rho,u,\theta)\Vert_{L^\infty_{T_0}(H^2)}\leq C. 
\end{equation}
\end{prop}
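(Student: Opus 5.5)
We take as given the uniform bounds of Theorem \ref{thm:bounds} on $[0,T_0]$. These give $\underline{\rho}\le\rho\le\overline{\rho}$, $\underline{\theta}\le\theta\le\overline{\theta}$, $0\le c\le 1$, $\partial_x u\in L^3(0,T_0,L^\infty)$, $\sigma\in L^\infty(L^3)\cap L^2(H^1)$, $\partial_x\theta\in L^\infty(L^2)$ and $\partial_x(\kappa\partial_x\theta)\in L^2(L^2)$. The plan is to propagate $H^1$ and then $H^2$ regularity by a hierarchy of linear estimates. Each level is closed by Grönwall's lemma with a coefficient of the form $\Vert\partial_x u\Vert_\infty+\Vert\partial_x\sigma\Vert_\infty+\dots$, which is integrable in time thanks to the previous level. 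All constants depend only on the $H^2$ norm of the data (through $C_0$, the bounds on $\rho_0,\theta_0$, and the higher norms introduced below) and on $T_0$.

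\textbf{Step 1: first-order estimates on the transported quantities.} Differentiate \eqref{eq:cmeso} and \eqref{eq:contmeso} in $x$. This gives $D_t(\partial_x c)=-\partial_x u\,\partial_x c$ and $D_t(\partial_x\rho)=-\partial_x u\,\partial_x\rho-\rho\,\partial_{xx}u$. Multiply by $\partial_x c$ and $\partial_x\rho$ and integrate. The term $\partial_{xx}u$ must not be estimated directly; write instead $\mu\partial_{xx}u=\partial_x\sigma+\partial_x(R\rho\theta)-(\mu_+-\mu_-)\partial_x c\,\partial_x u$. This produces terms $\partial_x\sigma\in L^2(L^2)$, and also $\partial_x\rho$, $\partial_x c$, $\partial_x\theta$ multiplied by $L^\infty$ or $L^1_t L^\infty$ coefficients. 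Note that $\partial_x\theta\in L^2(L^\infty)$, since $\partial_x\theta=(\kappa\partial_x\theta)/\kappa$ and $\kappa\partial_x\theta\in L^2(H^1)$. Grönwall then gives $c,\rho\in L^\infty(H^1)$. This is the classical Hoff-type argument. The point is that $\sigma$, rather than $\partial_x u$, is the regular variable.

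\textbf{Step 2: second-order estimates on $\sigma$ and $\kappa\partial_x\theta$.} Apply $\partial_x$ to \eqref{eq:sigma} and test with $\partial_x\sigma$. Apply $\partial_x$ to the equation satisfied by $\mathfrak{q}:=\kappa\partial_x\theta$ (derived from \eqref{eq:thetameso}) and test with $\partial_x\mathfrak{q}$, which is the analogue of \eqref{eq:est1} one level higher. Without oscillations these are parabolic $H^1$ energy estimates. They yield $\partial_x\sigma,\partial_x\mathfrak{q}\in L^\infty(L^2)$ and $\partial_x(\partial_x\sigma/\rho),\ \partial_x(\partial_x\mathfrak{q}/(\rho\cv))\in L^2(L^2)$. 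We use the initial data in $H^2$ to bound $\partial_x\sigma_0$ and $\partial_x\mathfrak{q}_0$ in $L^2$. The source terms contain $\partial_x c$ and $\partial_x\rho$, which are now bounded in $L^2$ by Step 1. We control them through Gagliardo--Nirenberg as in Lemma \ref{GN}, absorbing the highest-order pieces into the dissipation. As a result, $\partial_x\sigma$ and $\partial_x\mathfrak{q}$ lie in $L^2(L^\infty)$, and so $\partial_x u=(\sigma+R\rho\theta)/\mu$ and $\partial_x\theta$ lie in $L^\infty(H^1)$.

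\textbf{Step 3: second derivatives of $c$ and $\rho$, and closure.} Differentiate the transport equations twice. The coefficients are now $\partial_x u\in L^1(L^\infty)$ and $\partial_{xx}u$, $\partial_{xxx}u$. Express the latter through $\partial_{xx}\sigma\in L^2(L^2)$ (Step 2) together with lower-order products already controlled in $L^\infty(L^2)$ or $L^2(L^\infty)$. Another Grönwall argument gives $c,\rho\in L^\infty(H^2)$. Recovering $u,\theta\in L^\infty(H^2)$ is then algebraic: we have $\partial_{xx}u=\partial_x((\sigma+R\rho\theta)/\mu)$ and $\partial_{xx}\theta=\partial_x(\mathfrak{q}/\kappa)$, with $\sigma,\mathfrak{q},c,\rho\in L^\infty(H^1)$.

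\textbf{Main obstacle.} The hard part is Step 2, the $\sigma$ equation. The right-hand side of \eqref{eq:sigma} contains $(\gamma-1)\partial_x\mathfrak{q}$. After differentiation, this involves $\partial_x\gamma\,\partial_x\mathfrak{q}$ and $(\gamma-1)\partial_{xx}\mathfrak{q}$, and the two estimates are coupled. We plan to treat them as one system. We add the two energy identities, using a small weight in front of the $\sigma$ energy so that $\int(\gamma-1)\partial_{xx}\mathfrak{q}\,\partial_x\sigma/\mu$ is absorbed by the $\mathfrak{q}$-dissipation. The products $\partial_x c\,\partial_x\mathfrak{q}\,\partial_x\sigma$ are handled by putting $\partial_x c$ in $L^2$ and the other two factors in $L^4$ via interpolation. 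If this absorption fails, the fallback is to work with the combined flux $\sigma+(\gamma-1)\mathfrak{q}$-type quantity, or with $\sigma/\mu$, which removes the most singular coupling.
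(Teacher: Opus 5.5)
Your proposal is correct and follows the route the paper indicates. The paper omits this proof as ``tedious but classical'' and names Hoff's second energy as the main ingredient. Your Step~2 estimate on $\partial_x\sigma$ is exactly that energy, since $\partial_x\sigma=\rho D_t u$, and Steps~1 and~3 are the standard transport estimates built around it. Note that the constants necessarily also depend on $T_0$ and on $\inf\rho_0$, $\inf\theta_0$, not only on the $H^2$ norm.

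The coupling you single out as the main obstacle is in fact harmless. Testing $\partial_x$ of \eqref{eq:sigma} against $\partial_x\sigma/\rho=D_tu$ gives $\frac12\frac{d}{dt}\int_0^1\rho (D_tu)^2+\int_0^1\mu(\partial_x D_tu)^2=\int_0^1 (\gamma\sigma\partial_x u+(\gamma-1)\partial_x(\kappa\partial_x\theta))\,\partial_x D_tu$. No derivatives of $\rho$ or $c$ appear, and the right-hand side is already controlled by Theorem~\ref{thm:bounds} through $\int_0^T\Vert\sigma\Vert_\infty^3$, $\sup_t\int_0^1|\partial_xu|^3$ and $\int_0^T\int_0^1|\partial_x(\kappa\partial_x\theta)|^2$. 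So no weighting or joint treatment with the $\kappa\partial_x\theta$ estimate is needed.
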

We omit the proof of this proposition because it is tedious but classical. The main ingredient is the use of the second energy of Hoff (see \cite{Li2}). By a bootstrap argument, we then obtain the
\begin{theorem}
Suppose that $(c_0,\rho_0,u_0,\theta_0)\in H^2(\T)$ and that $(c_0,\rho_0,u_0,\theta_0)\in {\cal O}$. Then, for all $T>0$, \eqref{eq:cmeso}--\eqref{eq:thetameso} with initial condition $(c_0,\rho_0,u_0,\theta_0)$ has a unique solution $(c,\rho,u,\theta)\in C(0,T,H^2(\T))$.
\end{theorem}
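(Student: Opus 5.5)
The plan is a standard continuation argument: combine the local existence result of Theorem \ref{thm:deserre}, applied to the conservative reformulation \eqref{eq:247} with the strictly convex entropy $\eta+\beta\pi$ built above, with the $H^2$ a priori bound of the preceding proposition. Fix $T>0$. Since $(c_0,\rho_0,u_0,\theta_0)\in H^2(\T)\subset C(\T)$ takes values in ${\cal O}$, and $\T$ is compact, the range of the initial data lies in a compact subset of ${\cal O}$. Theorem \ref{thm:deserre} (whose hypotheses 1--4 were verified in the previous subsections) then gives a unique solution on some $[0,T_*]$, $T_*>0$, in the class $U\in C(0,T_*,H^2(\T))$, $\partial_t U\in L^2(0,T_*,H^1(\T))$. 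Let $T^\star\le T$ be the supremum of the times up to which such a solution exists. By local uniqueness, solutions on overlapping intervals coincide, so the solution is well defined on $[0,T^\star[$.

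Next I argue by contradiction, assuming $T^\star<T$. The uniform estimates of Section \ref{sec:estimates} apply on $[0,T^\star[$ with constants depending only on $T$ and the data. In particular, Propositions \ref{prop:4}, \ref{prop:rholower}, \ref{prop:5} and the upper bound on $\theta$ confine $(c,\rho,u,\theta)$ to a fixed compact set: $0\le c\le1$ comes from transport, $\underline\rho\le\rho\le\overline\rho$ and $\underline\theta\le\theta\le\overline\theta$ from those propositions, and $|u|\le\sqrt{H_7}$. Hence $U(t)$ stays in a compact $K\subset{\cal O}$ independent of $t<T^\star$, possibly after enlarging ${\cal O}_0$ at the start, which is allowed since the bounds do not depend on it. The preceding proposition then gives $\sup_{t<T^\star}\Vert U(t)\Vert_{H^2}\le C$.

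The local existence time in Theorem \ref{thm:deserre} depends only on the $H^2$ norm of the data and on the compact set containing its values, through the constants $c_{K}$ and $\omega$ on $K$. So there is a uniform $\tau>0$ such that restarting at any $t_0<T^\star$ yields a solution on $[t_0,t_0+\tau]$. Choosing $t_0=T^\star-\tau/2$ and gluing by uniqueness extends the solution beyond $T^\star$, a contradiction. Hence $T^\star=T$. Continuity into $H^2$ up to $t=T$ follows from applying the same extension once more. Uniqueness on $[0,T]$ comes from local uniqueness plus a connectedness argument on the set of times where two solutions agree.

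The step I expect to be the main obstacle is making precise that the local existence time is uniform in terms of $\Vert U_0\Vert_{H^2}$ and a compact $K\subset{\cal O}$ containing the range of $U_0$. Theorem \ref{thm:deserre} is stated only qualitatively, so I would extract this dependence from Serre's energy method, whose energy estimates involve only these quantities. A secondary point is that the constants of the preceding proposition must not blow up as $t\to T^\star$; this holds because they depend only on the initial $H^2$ norm and on $T$.
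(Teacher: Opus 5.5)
Your proposal is correct and follows the paper's route: the paper simply invokes ``a bootstrap argument'' combining Serre's local existence theorem (Theorem~\ref{thm:deserre}) with the uniform bounds of Section~\ref{sec:estimates} and the preceding $H^2$ a priori proposition. Your continuation argument spells out two points the paper leaves implicit: ${\cal O}_0$ must be chosen large enough from the outset to contain the a priori compact range, and the local existence time must be uniform in terms of $\Vert U_0\Vert_{H^2}$ and that compact set.
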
  
\subsection{Global existence of "à la Hoff" solutions}
\
Finally, by regularizing the initial data by convolution and using the uniform bounds from Theorem \ref{thm:bounds}, we establish the global existence of "à la Hoff" solutions to \eqref{eq:cmeso}--\eqref{eq:thetameso}. Thanks to the strong compactness of $u, \theta$ and  $\sigma$, passing to the limit in the nonlinear terms of the Navier-Stokes system is straightforward. We refer to \cite{BrBuGJLa} for the proof of the strong compactness of the color function $c$.

\printbibliography

\end{document}